\numberwithin{equation}{section}
\numberwithin{figure}{section}
\theoremstyle{plain}
\newtheorem{thm}{Theorem}
  \theoremstyle{plain}
  \numberwithin{thm}{section}
  \newtheorem{cor}[thm]{Corollary}
  \theoremstyle{plain}
  \newtheorem{lem}[thm]{Lemma}
  \theoremstyle{remark}
  \newtheorem{rem}[thm]{Remark}
    \theoremstyle{remark}
  \newtheorem{example}[thm]{Example}
   \theoremstyle{plain}
  \newtheorem{prop}[thm]{Proposition}
  \def\Ddots{\mathinner{\mkern1mu\raise\p@
\vbox{\kern7\p@\hbox{.}}\mkern2mu
\raise4\p@\hbox{.}\mkern2mu\raise7\p@\hbox{.}\mkern1mu}}
\newtheorem{definition}{Definition}
\numberwithin{definition}{section}
\newtheorem{assumption}{Assumption}
\newtheorem{result}{Result}
\newcommand{\norm}[1]{\left\| #1 \right\|}
\newcommand{\mklm}[1]{\left\{ #1 \right\}}
\newcommand{\eklm}[1]{\left\langle #1 \right\rangle}
\renewcommand{\d}{\,d}
\newcommand{\N}{{\mathbb N}}
\newcommand{\Z}{{\mathbb Z}}
\newcommand{\C}{{\mathbb C}}
\newcommand{\R}{{\mathbb R}}
\newcommand{\B}{{\mathcal B}}
\newcommand{\E}{{\mathcal E}}
\newcommand{\Jbb}{{\mathbb J }}
\renewcommand{\P}{{\mathcal P}}
\renewcommand{\O}{{\mathcal O}}
\newcommand{\W}{{\mathcal W}}
\newcommand{\Wh}{{{\mathcal W}_h}}
\newcommand{\X}{{\mathfrak X}}
\newcommand{\Xbf}{{\mathbf X}}
\newcommand{\Xbb}{{\mathbb X}}
\newcommand{\1}{{\bf 1}}
\renewcommand{\epsilon}{\varepsilon}
\newcommand{\tphi}{\widetilde{\varphi}}
\renewcommand{\rho}{\varrho}
\newcommand{\bdm}{\begin{displaymath}}
\newcommand{\edm}{\end{displaymath}}
\newcommand{\bq}{\begin{equation}}
\newcommand{\eq}{\end{equation}}
\newcommand{\bqn}{\begin{equation*}}
\newcommand{\eqn}{\end{equation*}}
\newcommand{\Cinft}{{\rm C^\infty }}
\newcommand{\CT}{{\rm C^\infty_c}}
\newcommand{\Sob}{{\rm H}}
\renewcommand{\L}{{\rm L}}
\newcommand{\Lcal}{{\mathcal L}}
\newcommand{\G}{{\mathcal G}}
\newcommand{\SO}{\mathrm{SO}}
\newcommand{\g}{{\bf \mathfrak g}}
\renewcommand{\k}{{\bf \mathfrak k}}
\newcommand{\p}{{\bf \mathfrak p}}
\newcommand{\Ad}{\mathrm{Ad}\,}
\newcommand{\vol}{\text{vol}\,}
\newcommand{\Op}{\mathrm{Op}}
\DeclareMathOperator{\supp}{supp\,}
\DeclareMathOperator{\tr}{tr}
\DeclareMathOperator{\grad}{grad}
\DeclareMathOperator{\sgrad}{s-grad}
\newcommand{\e}[1]{\,{\mathrm e}^{#1}\,}
\begin{document}
\author{Benjamin K\"uster}
\address{Philipps-Universit\"at Marburg, Fachbereich Mathematik und Informatik, Hans-Meerwein-Str., 35032 Marburg, Germany}
\email[Benjamin K\"uster]{\href{mailto:bkuester@mathematik.uni-marburg.de}{bkuester@mathematik.uni-marburg.de}}
\author{Pablo Ramacher}
\email[Pablo Ramacher]{\href{mailto:ramacher@mathematik.uni-marburg.de}{ramacher@mathematik.uni-marburg.de}}
\title[Semiclassical analysis and symmetry reduction II]{Semiclassical analysis and symmetry reduction II. Equivariant quantum ergodicity for invariant Schr\"odinger operators on compact  manifolds} 
\keywords{Quantum ergodicity, symplectic reduction,  singular equivariant asymptotics, resolution of singularities}
\date{\today}

\begin{abstract}
We study  the ergodic properties of Schr\"odinger operators on a compact connected Riemannian manifold $M$ without boundary in case that the underlying Hamiltonian system possesses certain symmetries. More precisely, let  $M$ carry an isometric and effective action of a compact connected Lie group $G$. Relying on an equivariant semiclassical Weyl law proved in Part I of this work, we deduce an equivariant quantum ergodicity theorem under the assumption that the symmetry-reduced Hamiltonian flow on the principal stratum of the singular symplectic reduction of $M$ is ergodic. In particular, we obtain an equivariant version of the Shnirelman-Zelditch-Colin-de-Verdi\`{e}re theorem, as well as a representation theoretic equidistribution theorem. If $M/G$ is an orbifold, similar results were recently obtained by Kordyukov. When $G$ is trivial, one recovers the classical results.
\end{abstract}

\maketitle

\setcounter{tocdepth}{1}
\tableofcontents{}

\section{Introduction}

\subsection{Motivation}

Let $M$ be a compact boundary-less connected Riemannian $\Cinft$-manifold of dimension $n$ with Riemannian volume density $dM$, and denote by $\Delta$ the Laplace-Beltrami operator on $M$. 
One of the central problems in spectral geometry consists in studying the properties of eigenvalues and eigenfunctions of $-\Delta$ in the limit of large eigenvalues. Concretely, let $\mklm{u_j}$ be an orthonormal basis of $\L^2(M)$ of eigenfunctions of $-\Delta$ with respective eigenvalues $\mklm{E_j}$, repeated according to their multiplicity. As $E_j \to \infty$,  one is interested among other things in the asymptotic distribution of eigenvalues,  the pointwise convergence of the $u_j$, bounds of the $\L^p$-norms of the $u_j$ for $1 \leq p \leq \infty$, and the weak convergence of the measures $|u_j|^2 dM$. These questions have been studied extensively over the years, and this paper is the second in a sequel which  addresses them for Schr\"odinger operators in case that the underlying classical system possesses certain symmetries.

In this second part, we shall concentrate  on the ergodic properties of eigenfunctions, while Part I  \cite{kuester-ramacher15a} of this work  dealt with the distribution of eigenvalues. The guiding idea behind  is the correspondence principle of semiclassical physics. To explain this in more detail, consider the unit co-sphere bundle $S^\ast M$, which corresponds to the phase space of a classical free particle moving with constant energy. Each point in $S^\ast M$ represents a state of the classical system, its motion being given by the geodesic flow in $S^\ast M$, and classical observables correspond to functions $a \in \Cinft(S^\ast M)$. On the other hand, by the Kopenhagen interpretation of quantum mechanics, quantum observables correspond to self-adjoint operators $A$ in the Hilbert space $\L^2(M)$. The elements $\psi \in \L^2(M)$ are interpreted as states of the quantum mechanical system, and the expectation value for measuring the property $A$ while the system is in the state $\psi$ is given by $\left<A\psi,\psi\right>_{\L^2(M)}$. The transition between the classical and the quantum-mechanical picture is given by a \emph{quantization map}
\[
 S^k(M) \ni a \quad \longmapsto \quad \Op_\hbar(a), \qquad k \in \R,
\]
where  $\Op_\hbar(a)$  is a pseudodifferential operator in $\L^2(M)$ depending on Planck's constant $\hbar$ and the particular choice of the  map $\Op_\hbar$,  and $S^k(M)\subset \Cinft(T^*M)$ denotes a suitable space of symbol functions. The \emph{correspondence principle} then says that, in the limit of high energies, the quantum mechanical system should behave more and more  like the corresponding classical system. 

The study of the asymptotic distribution of eigenvalues has a  history of more than a hundred years that goes back to work of Weyl \cite{weyl}, Levitan \cite{levitan52}, Avacumovi\v{c} \cite{avacumovic}, and H\"ormander \cite{hoermander68},  the central result being \emph{Weyl's law}, while the behavior of eigenfunctions has been examined more intensively during the last decades. One of the major results  in this direction  is  the \emph{quantum ergodicity theorem} for  chaotic systems, due to  Shnirelman \cite{shnirelman}, Zelditch \cite{zelditch1987}, and Colin de Verdi\`{e}re \cite{colindv}. To explain it, consider   the distributions\footnote{Here one regards $s \in \Cinft(S^\ast M)$ as an element in $S^0(M)\subset \Cinft(T^\ast M)$ by extending it $0$-homogeneously to $T^\ast M$ with the zero-section removed, and then cutting off that extension smoothly near the zero section.}
\[
\mu_j:\Cinft(S^\ast M) \longrightarrow \C, \quad a \longmapsto \eklm{\Op_\hbar(a) u_{j}, u_{j}}_{\L^2(M)}.
\]
If it exists, the distribution limit
$
\mu=\lim_{j \to \infty} \mu_{j}
$ 
constitutes a so-called \emph{quantum limit} for the eigenfunction sequence $\mklm{u_j}$. Furthermore,  the  probability measure on $S^\ast M$ defined by a quantum limit is invariant under the geodesic flow and independent of the choice of $\Op_\hbar$. Since the measure  $\mu$ projects to a weak limit $\bar \mu$ of the measures $\bar \mu_j=|u_j|^2 dM$, it is called a \emph{microlocal lift} of $\bar \mu$, and one can reduce the study of the measures $\bar \mu$ to the classification of quantum limits. The {quantum ergodicity theorem} then  says that if the geodesic flow on $S^\ast M$ is ergodic with respect to the Liouville measure $d(S^\ast M)$, then there exists a subsequence $\mklm{u_{j_k}}_{k\in \N}$ of density $1$  such that the $\mu_{{j_k}}$ converge to $d(S^\ast M)$ as distributions, and consequently the measures $\bar \mu_{j_k}$ converge weakly to $ dM$. Intuitively, the geodesic flow being ergodic means that the geodesics are distributed on $S^*M$ in a sufficiently chaotic way, and this equidistribution of trajectories in the classical system implies asymptotic equidistribution for a density $1$  subsequence of  states of the corresponding quantum system. 

A large class of manifolds whose geodesic flow is ergodic are compact boundary-less manifolds with strictly negative sectional curvature \cite{hopf, brin}, and one of the main conjectures in the field is the {Rudnick-Sarnak conjecture  on quantum unique ergodicity (QUE)} \cite{rudnick} which  says that if $M$ has strictly negative sectional curvature, the whole sequence  $|u_j|^2 dM$ converges weakly to the normalized Riemannian measure $(\vol M)^{-1} dM$ as $j \to \infty$. 
 It has been verified in certain arithmetic situations by Lindenstrauss \cite{lindenstrauss06}, but in general,  the conjecture is still very open. Sequences of eigenfunctions with a quantum limit different from the Liouville measure are called \emph{exceptional subsequences}, and it has been shown by Jacobson and Zelditch \cite{zelditchjacobson}  that any flow-invariant measure on the unit co-sphere bundle of a standard $n$-sphere occurs as a quantum limit for the Laplacian, showing that the family of exceptional subsequences for the Laplacian can be quite large if the geodesic flow fails to be ergodic. However, it was shown by Faure, Nonnenmacher, and de Bi\`{e}vre \cite{nonnenmacher} that ergodicity of the geodesic flow alone is not sufficient to rule out the existence of exceptional subsequences for  particular elliptic operators. Examples of ergodic billiard systems that admit exceptional subsequences of eigenfunctions were  recently found by Hassel \cite{hassel10}. 

\subsection{Problem and setup}
In this article, we will address the problem of  determining quantum limits for sequences of eigenfunctions of  Schr\"odinger operators in case that the underlying classical system possesses certain symmetries. Due to the presence of conserved quantitites, the corresponding Hamiltonian flow will in parts be integrable, and not totally chaotic,
in contrast to the  hitherto examined chaotic systems.

\medskip

\noindent
 The question is then how  the  partially chaotic behavior of the Hamiltonian flow is reflected in the ergodic properties of the eigenfunctions.

\medskip

\noindent
To explain things more precisely, let us recall the setting from Part I. Thus, let $G$ be a compact connected Lie group that acts effectively and isometrically on $M$. Note that  there might be   orbits  of different dimensions, and that the orbit space $\widetilde M:=M/G$  won't be a manifold in general, but a topological quotient space.  If $G$ acts on $M$ with finite isotropy groups, $\widetilde M$ is a compact orbifold, and its  singularities are not too severe. Consider now a \emph{Schr\"odinger operator} on $M$ given by
\[
\breve{P}(h)=-h^2\breve\Delta + V,\qquad \breve{P}(h):\Cinft(M)\to \Cinft(M),\qquad h\in(0,1],
\]
where $\breve \Delta$ denotes the Laplace operator as  differential operator on $M$ with domain $\Cinft(M)$ and $V\in \Cinft(M,\R)$  a $G$-invariant potential. $\breve{P}(h)$ has a unique self-adjoint extension 
\bq
\label{eq:15.08.2015}
P(h):\mathrm{H}^2(M)\to \L^2(M)
\eq
as an unbounded operator in $\L^2(M)$, where $\mathrm{H}^2(M)\subset \L^2(M)$ denotes the second Sobolev space, and one calls $P(h)$ a Schr\"odinger operator, too. For each $h\in(0,1]$, the spectrum of $P(h)$ is discrete, consisting of  eigenvalues $\{E_j(h)\}_{j\in \N}$ which we repeat according to their multiplicity and which form a non-decreasing sequence unbounded towards $+\infty$. Thus, the spectrum of $P(h)$ is bounded from below and its eigenspaces are finite-dimensional. The associated sequence of eigenfunctions $\{u_{j}(h)\}_{j\in\mathbb{N}}$ constitutes a Hilbert basis in $\L^2(M)$, and each eigenfunction $u_{j}(h)$ is smooth.
Now, since  $P(h)$ commutes with the isometric $G$-action, one can use representation theory to study  the eigenfunctions of $P(h)$ in a more detailed way. Indeed, by the Peter-Weyl theorem, the unitary left-regular representation of $G$ 
\begin{equation*}
G\times \L^2(M)  \to  \L^2(M),\qquad (g,f)  \mapsto  \left(L_gf:\, x\mapsto f(g^{-1}\cdot x)\right),
\end{equation*}
has an orthogonal decomposition into isotypic components of the form
\bq
\label{eq:PW} 
\L^2(M)=\bigoplus_{\chi \in \widehat G} \L^2_\chi(M),\qquad \L^2_\chi(M)= T_\chi \, \L^2(M),
\eq
where we wrote   $\widehat{G}$ for the set of equivalence classes of irreducible unitary $G$-representations, and  $T_\chi:\L^2(M)\to \L^2_\chi(M)$ for the associated orthogonal projections.  The \emph{character} belonging to an element $\chi \in \widehat G$ is given by $\chi(g):=\tr \pi_\chi(g)$, where $\pi_\chi$  denotes a representation of class $\chi$. It is also denoted by $\chi$, and the projectors $T_\chi$ are given by the explicit formula
\bq
T_{\chi}: f  \mapsto  \Big(x\mapsto d_{\chi}\int_{G}\overline{\chi(g)}f(g^{-1}\cdot x)\, dg\Big),\label{eq:groupproj}
\eq
where  $dg$ is the normalized Haar measure on $G$ and $d_\chi$ the dimension of $\pi_\chi$. Since each eigenspace of $P(h)$ decomposes into a sum of unitary irreducible $G$-representations, we can study the eigenfunctions of $P(h)$ by considering its bi-restrictions $P(h)|_\chi:\L^2_\chi(M)\cap \Sob^2(M)\to \L^2_\chi(M)$ to the different isotypic components. More generally, for an operator $A:D\to \L^2(M)$ defined on a $T_\chi$-invariant subset $D\subset \L^2(M)$ one can consider the associated \textit{reduced operator}
 \[
A_{\chi}:=T_{\chi}\circ A\circ T_{\chi}|_D.\]
 Since $P(h)$ commutes with $T_\chi$, the reduced operator $P(h)_\chi$ coincides with  $P(h)|_\chi$. Instead of considering only one isotypic component, one can also consider the bi-restriction of $P(h)$ to $h$-dependent sums of isotypic components of the form
\[
\L^2_\Wh(M)=\bigoplus_{\chi \in \Wh} \L^2_\chi(M),
\]
choosing for each $h\in (0,1]$ an appropriate finite subset $\Wh\subset \widehat G$ whose cardinality is allowed to grow in a controlled way as $h\to 0$. The study of a single isotypic component corresponds to choosing $\Wh=\{\chi\}$ for all $h$ and a fixed $\chi\in \widehat G$. 
Note that, so far, it is a priori irrelevant whether the group action has various different orbit types or not.\\

On the other hand, the principal symbol of the Schr\"odinger operator is given by the $G$-invariant symbol function 
\bq
p: T^*M\to \R,\qquad(x,\xi)\mapsto \norm{\xi}^2_x + V(x),\label{eq:hamiltonianfunction}
\eq
and represents a Hamiltonian on the co-tangent  bundle $T^*M$ with  canonical symplectic form $\omega$. It defines a  Hamiltonian flow $\varphi_t:T^*M\to T^*M$, which in  the special case $V\equiv0$ corresponds to the geodesic flow on $T^\ast M$. Consider now for a regular value $c$ of  $p$  the hypersurface  $\Sigma_c:=p^{-1}(\{c\})\subset T^*M$. It is invariant under the Hamiltonian flow $\varphi_t$, and carries a canonical hypersurface measure $d\Sigma_c$  induced by $\omega$. In the special case $\Sigma_c=S^\ast M$, $d\Sigma_c=d(S^*M)$ is commonly called the \emph{Liouville measure}. Now, if $G$ is non-trivial, $\varphi_t$ cannot be ergodic on $(\Sigma_c,d\Sigma_c)$ due to the presence of additional conserved quantities besides the total energy $c$. To describe the dynamics of the system, it is therefore convenient to divide out the symmetries, which can be done by  performing a procedure called \emph{symplectic reduction}. The latter is based on the fundamental fact that the  presence of conserved quantities or first integrals of motion leads to   elimination of variables, and reduces the given configuration space with its symmetries to a lower-dimensional one, in which the degeneracies and the conserved quantitites have been eliminated. Namely, let $\Jbb:T^*M\rightarrow \g^\ast$ denote the momentum map of the Hamiltonian $G$-action on $T^*M$, which represents the conserved quantitites of the system, and consider  the topological quotient space 
\bqn
\widetilde \Omega:=\Omega/G, \qquad   \Omega:=\Jbb^{-1}(\mklm{0}).
\eqn
If the $G$-action is not free the space $\Omega$ need not be a manifold. Nevertheless, $\Omega$ and $\widetilde \Omega$ are stratified spaces, where each stratum is a smooth manifold that consists of  orbits of one particular type. In particular, $\Omega$ and $\widetilde \Omega$ each have a \emph{principal stratum} $\Omega_\text{reg}$ and $\widetilde \Omega_\text{reg}$, respectively, which is the smooth manifold consisting of (the union of) all orbits whose isotropy type is the minimal of $M$. Moreover, $\widetilde \Omega_\text{reg}$ carries a canonical symplectic structure, and the Hamiltonian flow on $T^\ast M$ induces a flow $\widetilde{\varphi}_t:\widetilde{\Omega}_\text{reg}\to\widetilde{\Omega}_\text{reg}$, which is the Hamiltonian flow associated to the reduced Hamiltonian $\widetilde{p}:\widetilde{\Omega}_\text{reg}\to\R$ induced by $p$. One calls $\widetilde{\varphi}_t$ the \emph{reduced Hamiltonian flow}. Since the orbit projection $\Omega_\text{reg}\to\widetilde{\Omega}_\text{reg}$ is a submersion, $c$ is also a regular value of the reduced symbol function $\widetilde{p}$, and we define $\widetilde{\Sigma}_c:=\widetilde{p}^{-1}(\{c\})\subset \widetilde{\Omega}_\text{reg}$. Similarly to $(\Sigma_c,d\Sigma_c)$, the  smooth hypersurface $\widetilde{\Sigma}_c=(\Omega_\text{reg} \cap \Sigma_c)/G\subset \widetilde{\Omega}_\text{reg}$ carries a measure $d\widetilde{\Sigma}_c $ induced by the symplectic form on $\widetilde{\Omega}_\text{reg}$, and one can interpret the measure space $(\widetilde{\Sigma}_c,d\widetilde{\Sigma}_c )$  as the symplectic reduction of  $(\Sigma_c,d\Sigma_c)$. Note that  $(\widetilde{\Sigma}_c,d\widetilde{\Sigma}_c )$ corresponds to the measure space  $\big(\Omega_{\text{reg}}\cap \Sigma_c,\frac{d\mu_c}{\vol_\O}\big)$, where $d\mu_c$ denotes the induced volume density on the smooth hypersurface $ \Omega_{\text{reg}}\cap \Sigma_c\subset \Omega_{\text{reg}}$, and  the function $\vol_\O:\Sigma_c\cap \Omega_{\text{reg}}\to (0,\infty)$, $x\mapsto \vol(G\cdot x)$ assigns to an orbit its Riemannian volume, see Section 2.4 of Part I.

Now, coming back to our initial question, let us assume  that the reduced Hamiltonian flow $\widetilde{\varphi}_t$ is ergodic on $(\widetilde{\Sigma}_c,d\widetilde{\Sigma}_c )$, and choose for each $h\in (0,1]$ an appropriate finite set $\Wh\subset \widehat G$ whose cardinality does not grow too fast as $h\to 0$, see Definition \ref{def:family} below. We then ask whether there is a non-trivial family of index sets $\{\Lambda(h)\}_{h\in (0,1]}$, $\Lambda(h)\subset \N$, such that for $j\in \Lambda(h)$ we have $u_j(h)\in \L^2_\chi(M)$ for some $\chi \in \W_h$, the associated eigenvalue $E_j(h)$ is close to $c$, and the distributions
\[
\mu_j(h):\CT(\Sigma_c) \longrightarrow \C, \quad a \longmapsto \eklm{\Op_h(a) u_{j}(h), u_{j}(h)}_{\L^2(M)}
\]
converge for $j \in \Lambda(h)$ and $h\to0$ to a distribution limit with density $1$, which would answer the corresponding question for the measures $|u_{j}(h)|^2dM$. In particular, in the special case $V\equiv0$, $c=1$, the problem is equivalent to finding quantum limits for sequences of eigenfunctions of the Laplace-Beltrami operator. In case that $\widetilde M$ is an orbifold and $\Wh=\{\chi_0\}$ for all $h$, where $\chi_0$ corresponds to the trivial representation,  this problem has been dealt with recently by  Kordyukov \cite{kordyukov12} using classical techniques.

\medskip

The general idea behind our approach  can be summarized as follows. The existence of symmetries of a  classical Hamiltonian system implies the existence of conserved quantitites and partial integrability of the Hamiltonian flow, forcing the system to behave less chaotically. Symplectic reduction divides out the symmetries, and hence,  order, and allows to study the symmetry-reduced spectral and ergodic properties of the corresponding quantum system. In particular, eigenfunctions should reflect the partially chaotic behavior of the classical system. 
In our approach, we shall combine well-known methods from semiclassical analysis and symplectic reduction with results on singular equivariant asymptotics recently developed in \cite{ramacher10}. In case of the Laplacian, it would also be possible to study the problem via the original classical approach of Shnirelman, Zelditch and Colin de Verdi\`{e}re.

\subsection{Results} To formulate our results, we need to introduce some additional notation. As explained in Section 2.3 of Part I, the $G$-action on $M$ possesses  a principal isotropy type $(H)$ which is represented by a principal isotropy subgroup $H\subset G$, as well as a principal orbit type. We denote by $\kappa$ the dimension of the principal orbits, which agrees with the maximal dimension of a $G$-orbit in $M$, and we assume throughout the whole paper that $\kappa<n=\dim M$. For an element $\chi\in \widehat G$ write $[\pi_{\chi}|_{H}:\mathds{1}]$ for the multiplicity of the trivial representation in the restriction of the irreducible $G$-representation $\pi_\chi$ to $H$.  Let $\widehat G'\subset \widehat G$ be the subset consisting of those classes of representations that appear in the decomposition \eqref{eq:PW} of $\L^2(M)$. In order to consider a growing number of isotypic components of $\L^2(M)$ in the semiclassical limit we make the following
\begin{definition}\label{def:family}A family $\{\W_h\}_{h\in (0,1]}$ of finite sets $\W_h\subset \widehat G'$ is called \emph{semiclassical character family} if there exists a $\vartheta\geq 0$ such that for each $N\in \{0,1,2,\ldots\}$ and each differential operator $D$ on $G$ of order $N$ there is a constant $C>0$ independent of $h$ with
\[
\frac 1 {\# \W_h}\sum_{\chi\in \W_h}\frac{\norm{D\overline{\chi}}_\infty}{\left[\pi_{\chi}|_{H}:\mathds{1}\right]}\leq C\,h^{-\vartheta N}\qquad \forall \;h\in(0,1].
\]
We call the smallest possible $\vartheta$ the \emph{growth rate} of the semiclassical character family.
\end{definition}
As a simple example, consider the case  $G=\SO(2)\cong S^1\subset \C$. Then $\widehat G=\{\chi_k:k\in \Z\}$, where the $k$-th character $\chi_k:G\to \C$ is given by $\chi_k\big(e^{i\varphi}\big):=e^{ik\varphi}$, and one obtains a semiclassical character family with growth rate less or equal to $\vartheta$ by setting $\Wh:=\{\chi_k:|k|\leq h^{-\vartheta}\}$.  Analogous families can be constructed for any compact connected Lie group, see Example 1.2 of  Part I.
Next,  denote by $\Psi_{h}^{m}(M)$, $m \in \R\cup\{-\infty\}$, the set of semiclassical pseudodifferential operators on $M$ of order $m$. The principal symbols of these operators are represented by symbol functions in the classes $S^m(M)$, see Section 2.1 of Part I. Finally, for any measurable function $f$ with domain $D$ a $G$-invariant subset of $M$ or $T^*M$ we write
\bq
{\eklm{f}}_G(x):= \int_Gf(g\cdot x)\d g \label{eq:orbitalintegral},
\eq
and  denote by $\widetilde{\eklm{f}}_G$ the function induced on the orbit space $D/G$ by the $G$-invariant function ${\eklm{f}}_G$. 
As before, let $P(h)$ be a Schr\"odinger operator defined by  \eqref{eq:15.08.2015} with eigenfunctions $\{u_{j}(h)\}_{j\in\mathbb{N}}$ and eigenvalues $\{E_{j}(h)\}_{j\in\mathbb{N}}$. We can now state the main result of this paper.

\begin{result}[{\bf Equivariant quantum ergodicity for Schr\"odinger operators}, Theorem \ref{thm:ergod2}]
\label{res:2}Suppose that the reduced Hamiltonian flow $\widetilde{\varphi}_{t}$ is ergodic
on $\widetilde{\Sigma}_c$. For a number $\beta\in\big(0,\frac{1}{2\kappa+4}\big)$ and a semiclassical character family $\{\Wh\}_{h\in(0,1]}$ with growth rate $\vartheta<\frac{1-(2\kappa+4)\beta}{2\kappa+3}$ set
\[
J(h):=\mklm{j\in \N:E_j(h)\in[c,c+h^\beta],\; \chi_j(h) \in \Wh},
\]
were $\chi_j(h)$ is defined by $u_j(h)\in \L^2_{\chi_j(h)}(M)$. Then, there is a $h_0\in (0,1]$ such that for each $h\in(0,h_0]$ we have a subset $\Lambda(h)\subset J(h)$ satisfying 
\begin{equation*}
\lim_{h\to0}\frac{\#\Lambda(h)}{\#J(h)}=1
\end{equation*}
such that for each semiclassical pseudodifferential operator $A\in\Psi_h^{0}(M)$ with principal symbol $\sigma(A)=[a]$, where $a$ is $h$-independent, the following holds. For all $\epsilon>0$ there is a $h_\epsilon \in (0,h_0]$ such that
\begin{equation*}
\frac{1}{\sqrt{d_{\chi_j(h)}[\pi_{\chi_j(h)}|_{H}:\mathds{1}]}}\;\Big|\left\langle Au_j(h),u_j(h)\right\rangle_{\L^2(M)}-\fintop_{\Sigma_c\cap \Omega_\text{reg}}a\, \frac{d\mu_c}{\vol_\O}\Big|\;<\;\epsilon \qquad\forall\,j\in \Lambda(h), \; \forall\, h\in(0,h_\epsilon].
\end{equation*}
Moreover,  the integral in the previous line equals $\fint_{\widetilde{\Sigma}_c}\widetilde{\eklm{a}}_G \d \widetilde{\Sigma}_c$. 
\end{result}

If $\W_h$ consists of just  a single character, the statement of Result \ref{res:2} is slightly simpler, see Theorem \ref{thm:ergod22}. Result \ref{res:2} will be deduced from  the equivariant semiclassical Weyl law  proved in Part I. The proof of the latter is based on a functional calculus for semiclassical pseudodifferential operators and $h$-dependent test functions developed in \cite{kuester15}, and  reduces to the asymptotic description of certain oscillatory integrals  that have recently been studied in 
 \cite{ramacher10} using resolution of singularities. The involved phase functions are given  in terms of the underlying $G$-action on $M$, and if singular orbits occur,  the corresponding  critical sets are no longer smooth, so that a partial  desingularization process has to be implemented  in order to obtain asymptotics with remainder estimates via  the stationary phase principle. Let us  emphasize  that  the remainder estimate  for the equivariant semiclassical Weyl law proved in Part I, and consequently the desingularization process implemented in \cite{ramacher10},   are   crucial for studying the shrinking spectral windows $[c,c+h^\beta]$  and the growing families $\Wh$ of representations in Result \ref{res:2}.  In the special case of the Laplacian, Result \ref{res:2} becomes an equivariant version of the classical quantum ergodicity theorem of Shnirelman \cite{shnirelman}, Zelditch \cite{zelditch1987}, and  Colin de Verdi\`{e}re \cite{colindv}. To state it, let $\{u_j\}_{j\in\N}$ be an orthonormal basis in $\L^2(M)$ of eigenfunctions of $- \Delta$ with associated eigenvalues $\{E_j\}_{j\in \N}$.
 
 \begin{result}[{\bf Equivariant quantum limits for the Laplacian}, Theorem \ref{thm:quantlim}] 
 \label{res:3}
 Assume that the reduced geodesic flow is ergodic. Choose a semiclassical character family $\mklm{\W_h}_{h \in (0,1]}$ of growth rate $\vartheta<\frac{1}{2\kappa +3}$  and a partition $\mathcal{P}$  of the set $\{E_j\}_{j\in \N}$ of order $\beta\in \big(0,\frac{1-(2\kappa+3)\vartheta}{2\kappa +4}\big)$ in the sense of Definition \ref{def:partition}. Define the set of eigenfunctions
\[
\big\{u^{\W,\mathcal{P}}_i\big\}_{i\in\N}:=\big\{u_j: \chi_j\in \W_{E^{-1/2}_{\P(j)}}\big\},
\]
where $\chi_j$ is defined by $u_j\in \L^2_{\chi_{j}}(M)$. Then, there is a subsequence $\big\{u^{\W,\mathcal{P}}_{i_k}\big\}_{k\in\N}$ of density 1 in $\big\{u^{\W,\mathcal{P}}_i\big\}_{i\in\N}$ such that for all $s\in \Cinft(S^*M)$ one has
\begin{equation*}
\frac{1}{\sqrt{d_{\chi_{i_k}}[{\pi_{\chi_{i_k}}|_{H}}:\mathds{1}]}}\;\Big|\eklm{\Op(s) u^{\W,\mathcal{P}}_{i_k}, u^{\W,\mathcal{P}}_{i_k}}_{\L^2(M)} -\fintop_{S^*M\cap\Omega_\text{reg}}s \, \frac{d\mu}{\vol_\O}\Big|\longrightarrow 0 \qquad \text{ as } {k\to\infty},
\end{equation*}
where we wrote $\mu$ for $\mu_1$ and $\Op$ for $\Op_1$, which is the ordinary non-semiclassical quantization.
\end{result}

In the special case of a single isotypic component, Result \ref{res:3} simplifies to the following statement. Let $\{u^\chi_j\}_{j\in\N}$ be an orthonormal basis of $\L^2_\chi(M)$ consisting of eigenfunctions of $- \Delta$. Then, there is a subsequence $\{u^\chi_{j_k}\}_{k\in\N}$ of density $1$ in $\{u^\chi_j\}_{j\in\N}$ such that for all $a\in \Cinft(S^*M)$ one has
\begin{equation*}
\eklm{\Op(a) u^\chi_{j_k}, u^\chi_{j_k}}_{\L^2(M)} \;{\longrightarrow}\; \frac 1{\vol_{\frac {\mu}{\vol_\O}} (S^\ast M\cap\Omega_\text{reg})}
\intop_{S^*M\cap\Omega_\text{reg}}a \, \frac{d\mu}{\vol_\O} \qquad \text{ as } {k\to\infty},
\end{equation*}
 see Theorem \ref{thm:quantlim2}.
\medskip

The obtained quantum limits $(\vol_{\frac {\mu_c}{\vol_\O}} (\Sigma_c\cap\Omega_\text{reg}))^{-1} \frac{d\mu_c}{\vol_\O}$ describe the ergodic properties of the eigenfunctions in the presence of symmetries, and are the answer to our initial question. They are singular measures since they are supported on $\Sigma_c\cap\Omega_\text{reg}$, which is a  submanifold of $\Sigma_c$ of codimension $\kappa$.  In fact, they correspond to Liouville measures on the smooth bundles 
\bqn 
S^\ast_{\widetilde p,c}(\widetilde M_\text{reg}):=\mklm{(x,\xi) \in T^\ast (\widetilde M_\text{reg}): \widetilde p(x,\xi)=c}
\eqn
over  the space of principal orbits in $M$; if $\widetilde M$ is an orbifold, they are given by  integrals over the orbifold bundles $S^\ast_{\widetilde p,c}(\widetilde M):=\mklm{(x,\xi) \in T^\ast \widetilde M: \widetilde p(x,\xi)=c}$, see Remark \ref{rem:0304}. In the latter case,  the ergodicity of the reduced flow $\widetilde{\varphi}_{t}$  on $\widetilde{\Sigma}_{c}$ is equivalent to the ergodicity of the corresponding Hamiltonian flow on the orbifold bundle $S^\ast_{\widetilde p,c} (\widetilde M)$ with respect to the canonical Liouville measures. 
\medskip

Projecting from $S^*M\cap\Omega_\text{reg}$ onto $M$  we immediately deduce from Result \ref{res:3} for any $f \in C(M)$
\begin{equation*}
\frac{1}{\sqrt{d_{\chi_{i_k}}[\pi_{\chi_{i_k}}|_{H}:\mathds{1}]}}\;\Big| \intop_M f | u^{\W,\mathcal{P}}_{i_k}|^2 dM -\fintop_{M}f \, \frac{dM}{\vol_\O} \Big|\longrightarrow 0 \qquad \text{ as } {k\to\infty}, 
\end{equation*} 
which describes the asymptotic equidistribution of the eigenfunctions in the presence of symmetries, see Corollary \ref{cor:equi}. For a single isotypic component we get the weak convergence of measures
\begin{equation*}
|u^\chi_{j_k}|^2 \d M \;{\longrightarrow}\;\Big(\text{vol}_{\frac{dM}{\vol_\O}}M\Big)^{-1}\frac{dM}{\vol_\O} \qquad \text{ as } {k\to\infty},
\end{equation*}
compare Corollary \ref{cor:24.08.2015}. The fact that  the reduced and the non-reduced flow cannot be simultaneously ergodic is consistent with the QUE conjecture, since otherwise our results would, in principle,  imply the existence of  exceptional subsequences for ergodic geodesic flows. In this sense, our results can be understood as complementary to the previously known results. 
Applying some elementary representation theory, one can deduce from Corollary \ref{cor:equi} a statement on convergence of measures on the topological Hausdorff space $\widetilde{M}$ associated to irreducible $G$-representations. For this, choose an orthogonal decomposition of $\L^2(M)$ into a direct sum  $\bigoplus_{i\in \N}V_i$ of irreducible unitary $G$-modules such that each $V_i$ is contained in an  eigenspace of the Laplace-Beltrami operator corresponding to some eigenvalue $E_{j(i)}$. Denote by $\chi_i\in \widehat G$ the class of $V_i$.

\begin{result}[{\bf Representation-theoretic equidistribution theorem}, Theorem \ref{thm:equishnirelman3}]\label{res:4}
Assume that the reduced geodesic flow is ergodic.  Choose a semiclassical character family $\mklm{\W_h}_{h \in (0,1]}$ of growth rate $\vartheta<\frac{1}{2\kappa +3}$ and a partition $\mathcal{P}$ of $\{E_j\}_{j\in \N}$ of order $\beta\in (0,\frac{1-(2\kappa+3)\vartheta}{2\kappa +4})$. Define the set of irreducible $G$-modules
\[
\big\{V^{\W,\mathcal{P}}_l\big\}_{l\in\N}:=\big\{V_i: \chi_i\in \W_{E^{-1/2}_{\P(j(i))}}\big\}.
\]
As in Lemma \ref{lem:orbitint}, assign to each $V^{\W,\mathcal{P}}_l$ the $G$-invariant function $\Theta_l:= \Theta_{V^{\W,\mathcal{P}}_l}:M\to [0,\infty)$, and regard it as a function on $M/G=\widetilde{M}$.  Then, there is a subsequence $\big\{V^{\W,\mathcal{P}}_{l_m}\big\}_{m\in\N}$ with
\[
\lim_{N\to \infty}\frac{\sum_{l_m\leq N}d_{\chi_{l_m}}}{\sum_{i\leq N}d_{\chi_i}}=1
\]
for which 
\begin{equation*}
\frac{1}{\sqrt{d_{\chi_{l_m}}[\pi_{\chi_{l_m}}|_{H}:\mathds{1}]}}\;\Big| \int_{\widetilde M} f \, \Theta_{l_m}\,  \d\widetilde M - \fint_{\widetilde M} f \frac{d\widetilde{M}}{\vol} \Big|\longrightarrow 0 \qquad \text{ as } {m\to\infty},
\end{equation*}
where $d\widetilde{M}:=\pi_{\ast}dM$ is the pushforward measure defined by the orbit projection $\pi:M\to M/G=\widetilde{M}$ and $\vol: \widetilde{M} \rightarrow (0,\infty)$ assigns to an orbit its Riemannian volume.
\end{result}

For a single isotypic component, one  obtains  a simpler  statement by considering an orthogonal decomposition of  $\L^2_\chi(M)$ into  a sum $\bigoplus_{i\in \N}V^\chi_i$ of irreducible unitary $G$-modules of class $\chi$ such that each $V^\chi_i$ is contained in some eigenspace of the Laplace-Beltrami operator. Then,  we have the weak convergence of measures
\begin{equation*}
\Theta^{\chi}_{i_k} \d\widetilde{M} \;\overset{k\to\infty}{\longrightarrow}\; \Big(\textrm{vol}_{\frac{d\widetilde{M}}{\vol}}\widetilde{M}\Big)^{-1}\frac{d\widetilde{M}}{{\vol }}
\end{equation*}
for a subsequence $\{V^{\chi}_{i_k}\}_{k\in \N}$ of density $1$ in $\{V^{\chi}_i\}_{i\in \N}$, see Theorem \ref{thm:equishnirelman34}.
Note that Result \ref{res:4} is a statement about limits of representations, or multiplicities, and not eigenfunctions,  since it assigns to unitary irreducible  $G$-module in $\L^2(M)$  a measure on $\widetilde{M}$, and then considers the weak convergence of those measures.  In essence, it  can therefore  be regarded as a representation-theoretic statement
 in which the spectral theory for the Laplacian  only enters in  \emph{choosing a concrete decomposition} of each isotypic component.   In the case of the trivial group $G=\{e\}$, there is only one isotypic component in $\L^2(M)$, associated to the trivial representation, and choosing a family  of irreducible modules is equivalent to choosing a Hilbert basis of $\L^2(M)$ of eigenfunctions of the Laplace-Beltrami operator. Result \ref{res:4} then reduces to the classical equidistribution theorem for the Laplacian.

\medskip

In Section \ref{sec:example} we consider some  concrete examples to illustrate our results. They include 
\begin{itemize}
\item compact locally symmetric spaces ${\mathbb Y}:=\Gamma \backslash \G/K$,  where $\G$ is a  connected semisimple Lie group of rank $1$ with finite center,  $\Gamma$ a discrete co-compact subgroup, and $K$ a maximal compact subgroup;
\item all surfaces of revolution diffeomorphic to the $2$-sphere;
\item $S^3$-invariant metrics on the $4$-sphere.
\end{itemize}
In the first case,  $K$ acts with finite isotropy groups on  $\Xbb:=\Gamma \backslash \G$, so that $\mathbb Y$ is an orbifold. Furthermore,  the  orbit volume
is constant. The reduced geodesic flow on $M=\Xbb:=\Gamma \backslash \G$ coincides with the geodesic flow on  ${\mathbb Y}$ and is ergodic, since ${\mathbb Y}$ has strictly negative sectional curvature.  Our results recover  the Shnirelman-Zelditch-Colin-de-Verdi\`{e}re theorem for  $\L^2({\mathbb Y})\simeq \L^2(\Xbb)^K$, and generalize it to non-trivial isotypic components  of $\L^2(\Xbb)$. 
In the examples of the 2- and 4-dimensional spheres, the considered actions have two fixed points,  and the reduced geodesic flow is ergodic for topological reasons, regardless of the choice of invariant Riemannian metric and in spite of the fact that the geodesic flow can be totally integrable.
Since the eigenfunctions of the Laplacian on the standard $2$-sphere -- the spherical harmonics -- are well understood, we can independently verify Result \ref{res:4}  for single isotypic components in this case.

\subsection{Previously known results} \label{sec:1.4}

In case that    $G$ acts on $M$ with only one orbit type,  $\widetilde{M}$ is a compact smooth manifold with Riemannian metric induced by the $G$-invariant Riemannian metric on $M$. By co-tangent  bundle reduction, $T^\ast\widetilde M$ is symplectomorphic to $\Jbb^{-1}(\{0\})/G$, so the ergodicity of  the reduced geodesic flow on $M$ and that of the geodesic flow on $\widetilde{M}$ are equivalent. Under these circumstances, one can apply the classical Shnirelman-Zelditch-Colin-de-Verdi\`{e}re equidistribution theorem to $\widetilde{M}$, yielding an equidistribution statement for the eigenfunctions of the Laplacian $\Delta_{\widetilde{M}}$ on $\widetilde{M}$ in terms of weak convergence of measures on $\widetilde{M}$. On the other hand, one could as well  apply Corollary \ref{cor:equi} and Theorem \ref{thm:equishnirelman3} to $M$, yielding also a statement about weak convergence of measures on $\widetilde{M}$, but this time with measures related to eigenfunctions of the Laplacian $\Delta_{M}$ on $M$ in families of isotypic components of $\L^2(M)$. It is then an obvious question how these two  results are related. The answer  is  rather difficult in general, since -- in spite of the presence of the isometric group action -- the geometry of $M$ may be much more complicated than that of $\widetilde{M}$. Consequently, the eigenfunctions of $\Delta_{M}$, even those in the trivial isotypic component, that is, those that are  $G$-invariant, may be much harder to understand than the eigenfunctions of $\Delta_{\widetilde{M}}$. Only  in case that all orbits are  totally geodesic or minimal submanifolds, or, more generally,  do all have the same volume, one can show that an eigenfunction of $\Delta_{\widetilde{M}}$ lifts to a unique $G$-invariant eigenfunction of $\Delta_{M}$ \cite{watson, bruening81,bergery}. In this particular situation, it is easy to see that the  application of the Shnirelman-Zelditch-Colin-de-Verdi\`{e}re equidistribution theorem implies our results, but only for the single trivial isotypic component. The case of a compact locally symmetric space treated in Section \ref{sec:8.1} is an example of this in the torsion-free case. In cases where the orbit volume is not constant,  we do not know of any significant results about the relation between the eigenfunctions of $\Delta_{\widetilde{M}}$ and $\Delta_{M}$. 

An explicitly  studied case is that of a general free $G$-action, when the projection $M\to M/G= \widetilde{M}$ is a Riemannian principal $G$-bundle. Extending work of Schrader and Taylor \cite{schrader}, Zelditch  \cite{Zelditch19921}  obtained  quantum limits for sequences of eigenfunctions of $\Delta_M$ in so-called \emph{fuzzy ladders}. These are subsets of $\L^2(M)$ associated to a so-called \emph{ray of representations} originating from some chosen $\chi\in\widehat{G}$. The obtained quantums limit  are  directly related to the symplectic orbit reduction $\Jbb^{-1}(\mathcal{O}_\chi)/G \simeq T^\ast \widetilde{M}$, where $\mathcal{O}_\chi\subset\mathfrak{g}^*$ is the co-adjoint orbit associated to $\chi$ by the Borel-Weil theorem. They are given by Liouville measures on hypersurfaces in $\Jbb^{-1}(\mathcal{O}_\chi)/G$, and their projections onto the base manifold agree with ours. 

Further, significant efforts were recently made towards the understanding of quantum (unique) ergodicity for locally symmetric spaces, which are particular manifolds of negative sectional curvature. As before, let $\G$ be a connected, semisimple Lie group with finite center, $\G=KAN$ an Iwasawa decomposition of $\G$,  and $\Gamma$ a torsion-free, discrete subgroup in $\G$. Following earlier work of Zelditch and Lindenstrauss,  Silberman and Venkatesh introduced  in \cite{silberman-venkatesh07} certain representation theoretic lifts from ${{\mathbb Y}}=\Gamma\backslash \G / K$  to $\Xbb=\Gamma \backslash \G$ that substitute the previously considered microlocal lifts and take into account the additional structure of locally symmetric spaces. These representation theoretic lifts should play an important role in solving the QUE conjecture, already settled by Lindenstrauss in  particular cases, also for  higher rank symmetric spaces. In case that $\Gamma$ is co-compact, their results were generalized by Bunke and Olbrich \cite{bunke-olbrich06} to homogeneous vector bundles $\Xbb \times_K V_\chi$  over ${{\mathbb Y}}$ associated to equivalence classes of irreducible representations $\chi\in \widehat K$ of the maximal compact subgroup $K$.  The constructed representation theoretic lifts are invariant with respect to the action of $A$, which corresponds to the invariance of the microlocal lifts under the geodesic flow. Since $\Gamma$ has no torsion, $K$ acts on $\Xbb$ only with one orbit type. 

Finally, there has  been much work in recent times  concerning the spectral theory of elliptic operators on orbifolds. Such spaces are locally homeomorphic to a quotient of Euclidean space by a finite group while, globally, any (reduced) orbifold is a quotient of a smooth manifold by a compact Lie group action with finite isotropy groups, that is, in particular, with no singular isotropy types \cite{adem-leida-ruan, moerdijk-mrcun}. As it turns out, the theory of elliptic operators on orbifolds is essentially equivalent to the theory of invariant elliptic operators on manifolds carrying the action of a compact Lie group with finite isotropy groups \cite{bucicovschi, dryden-et-al, stanhope-uribe}. In particular, Kordyukov \cite{kordyukov12}  obtained the Shnirelman-Zelditch-Colin-de-Verdi\`{e}re theorem for elliptic operators on compact orbifolds, using their original high-energy approach. Result \ref{res:3} recovers his result for the Laplacian, and generalizes it to  singular group actions and  growing families of isotypic components.

\medskip

Thus, in all the previously examined cases,  no singular orbits occur. Actually, our work can be viewed  as part of an attempt to develop a  spectral theory of elliptic operators on  general singular $G$-spaces.

\medskip

To close, it might be appropriate to mention that Marklof and O'Keefe \cite{marklof}  obtained quantum limits in situations where the geodesic flow is ergodic only in certain regions of  phase space. Conceptually, this is both similar and contrary to our approach, since in this case the geodesic flow is partially ergodic as well, but not due to symmetries.

\subsection{Comments and outlook}\label{sec:1.5}
We would like to close this introduction by making some comments, and indicating some possible research lines for the future. 

Weaker versions of Result \ref{res:2} and \ref{res:3} can be proved in the case 
of a single isotypic component by the same methods employed here with a less sharp energy localization in a fixed interval $[c,c+\varepsilon]$  instead of a shrinking interval $[c,c+h^\beta]$. The point is that for these weaker statements no remainder estimate in the semiclassical Weyl law is necessary, see  Remark \ref{rem:23.11}. Thus, at least the weaker version of Result \ref{res:3} could have also been obtained within the classical framework in the late 1970's   using heat kernel methods as in \cite{donnelly78} or \cite{bruening-heintze79}. In contrast, for the stronger versions of equivariant quantum ergodicity proved in Result \ref{res:2} and \ref{res:3}, remainder estimates in the equivariant Weyl law, and in particular the results obtained in  \cite{ramacher10}  for general group actions via resolution of singularities, are necessary. However, the weaker versions would still be strong enough to imply Result \ref{res:4} for a single isotypic component. Therefore, in principle, Theorem \ref{thm:equishnirelman34}
could have been proved already when Shnirelman formulated his theorem more than 40 years ago.

\begin{figure}[h!]
\hspace{-1.2em}\begin{minipage}{0.50\linewidth}
\begin{overpic}[scale=0.8]{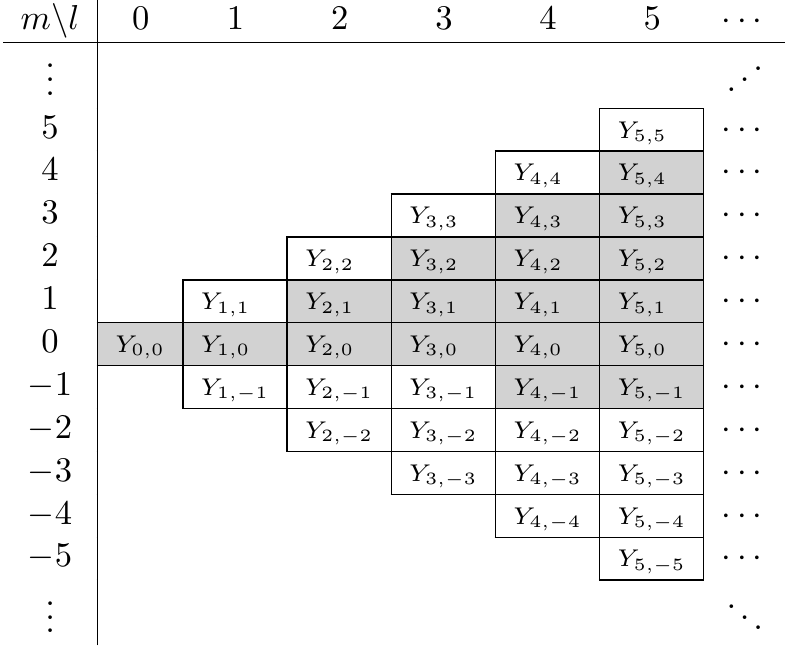}
\put(21.8,45,9){\includegraphics[scale=0.55]{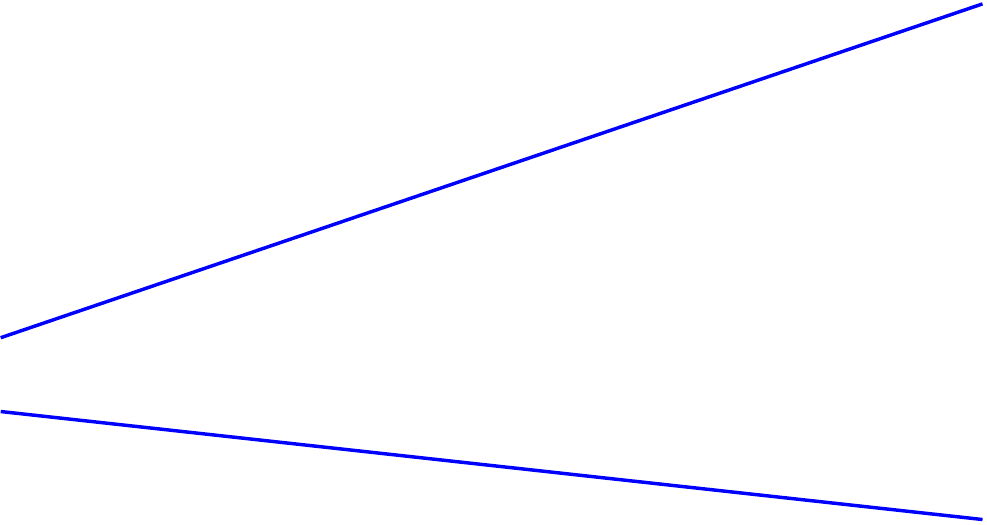}}
\end{overpic}
\centering
\caption{Spherical harmonics on $S^2$ in  cone-like families of representations.
 \label{fig:schrader}}
\end{minipage}\centering
\hspace{.5em} \begin{minipage}{0.50\linewidth}
{\includegraphics[scale=0.8]{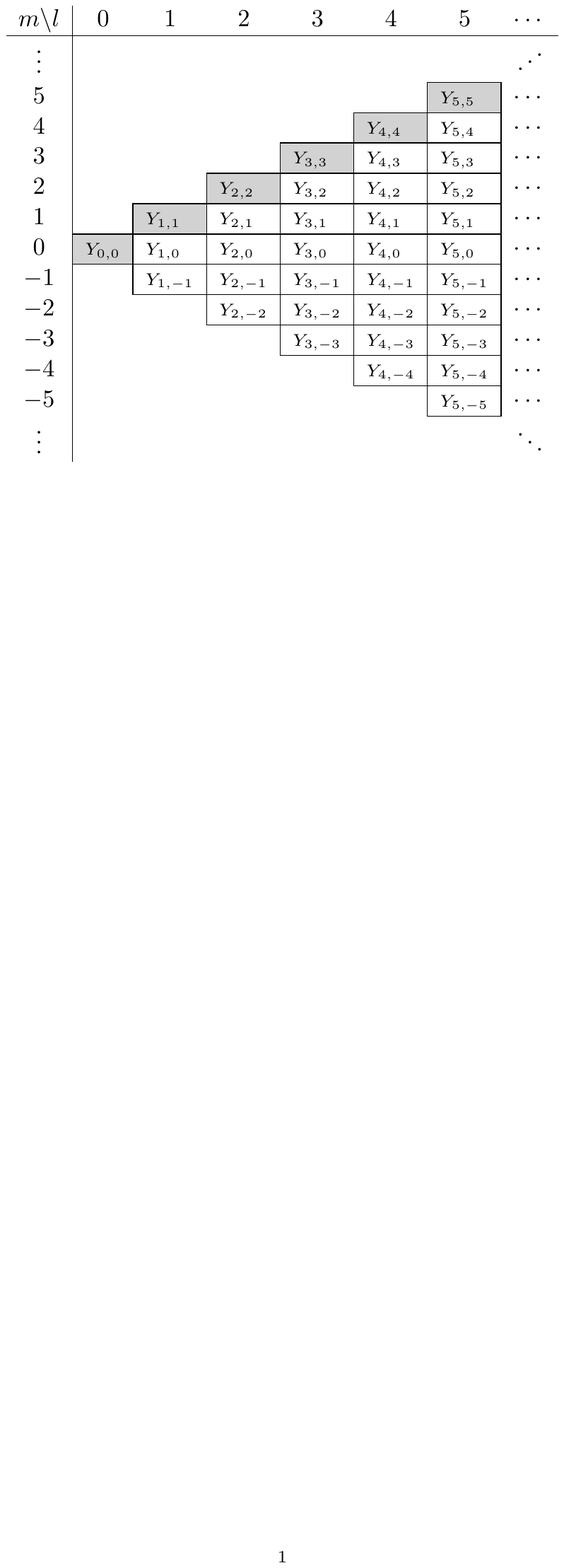}}
\centering\caption{Zonal spherical harmonics on $S^2$. \label{fig:zonal}}
\end{minipage}
\end{figure}

As mentioned above, the  idea of considering families of representations that vary with the asymptotic parameter has been known since the end of the 1980's, compare \cite{schrader, guillemin-uribe90, Zelditch19921}, and it is a natural problem to determine what kind of families can be considered in the context of quantum ergodicity, and study them from a more conceptional point of view. To illustrate this, consider the example of the  standard $2$-sphere $S^2\subset \R^3$, acted upon by the group $\SO(2)\simeq S^1$ of rotations around the $z$-axis in $\R^3$. This action has exactly two fixed points given by the north pole and the south pole of $S^2$, while all other orbits are circles. The eigenvalues of $-\Delta$ on $S^2$ are given by the numbers $l(l+1)$, $ l=0,1,2,3\dots$, and the corresponding eigenspaces $\E_l$ are of dimension $2l+1$. They are  spanned by the spherical harmonics $Y_{l,m}$ given by the Legendre polynomials, where $m \in \Z$, $|m|\leq l$. Each subspace $\C \cdot Y_{l,m}$ corresponds to an irreducible representation of $\SO(2)$,  and each irreducible representation with character $\chi_k(e^{i\phi})=e^{ik\phi}$  and $|k|\leq l$ occurs in the eigenspace $\E_l$ with multiplicity $1$. The semiclassical character families considered in our work have been illustrated in Figures 1.1 and 1.2 of Part I. As opposed to our results, Figure \ref{fig:schrader} illustrates a cone-like family of representations  that  would correspond to subsequences of eigenfunctions of density larger than zero, while 
Figure \ref{fig:zonal} depicts the  sequence of zonal spherical harmonics $Y_{l,l}$, which are known  to  localize at the equator of $S^2$ as $l\to \infty$, and therefore yield a different limit measure than the one implied by Result \ref{res:4},  see Section \ref{sec:6.2} and in particular Remark \ref{rem:last1}. Therefore, different kinds of families of representations give rise to qualitatively different quantum limits, and it would be illuminating to understand this interrelation in a deeper way.

\medskip

As further lines of research,  it would be interesting to see whether our results can be generalized to $G$-vector bundles, as well as manifolds with boundary and non-compact situations. Also, in view of Result \ref{res:4}, it might be possible to deepen our understanding of  equivariant quantum ergodicity via representation theory. Finally, one can ask what could be a suitable symmetry-reduced version of the QUE conjecture, and we intend to deal with these questions in the future. 
In the particular case of the $\SO(2)$-action on the standard 2-sphere studied in Section \ref{sec:example}, we actually show that in each fixed isotypic component the representation-theoretic equidistribution theorem for the Laplacian applies to  the full sequence of spherical harmonics, so that equivariant QUE holds in this case. However, even for this simple example it is unclear whether equivariant QUE holds for  growing families of isotypic components.

\section{Background}

In this section we describe  the setup in more detail, and collect the relevant results from Part I  \cite{kuester-ramacher15a} needed in the upcoming sections. For a systematic exposition of the background with corresponding references, we refer the reader to Section 2 and Appendix A of Part I.

\subsection{Symplectic reduction}
\label{sec:2.3} In what follows, we  review  in some detail  the  theory of symplectic reduction of Marsden and Weinstein, Sjamaar, Lerman and Bates. It was already briefly recalled in Part I. The theory emerged out of classical mechanics, and is based on the fundamental fact that the  presence of conserved quantities or integrals of motion leads to the elimination of variables. 
Let $(\Xbf,\omega)$ be a connected symplectic manifold, and assume that  $(\Xbf,\omega)$ carries  a global Hamiltonian action of a Lie group  $G$. In particular, we will be interested in the case where $\Xbf=T^\ast M$ is the co-tangent bundle of our manifold $M$.   Let 
\begin{eqnarray*}
\Jbb:\Xbf  \to  \g^\ast,\qquad \Jbb(\eta)(X)=  \Jbb_X(\eta),
\end{eqnarray*}
be the corresponding \emph{momentum map}, where $\Jbb_{X}:\Xbf  \to  \mathbb{R}$ is a $\Cinft$-function depending linearly on $X\in \g$ such that the fundamental vector field $\widetilde X$ on $\Xbf$ associated to $X$  is given by the Hamiltonian vector field of $\Jbb_X$. It is clear from the definition that  $\Ad^\ast(g^{-1}) \circ \Jbb= \Jbb \circ g$.  Furthermore, for each $X \in \g$ the function $\Jbb_X$ is a \emph{conserved quantity} or \emph{integral of motion} for any $G$-invariant function $p \in \Cinft(\Xbf)$ since in this case 
\bqn 
\mklm{\Jbb_X, p}=\omega(\sgrad \Jbb_X,\sgrad p)=-\omega(\widetilde X, \sgrad p)=dp(\widetilde X)=\widetilde X(p)=0,
\eqn
where  $\mklm{\cdot, \cdot}$ is the Poisson-bracket on $\Xbf$  given by $\omega$. Now, define 
\bqn 
\Omega:=\Jbb^{-1}(\mklm{0}), \qquad \widetilde \Omega:=\Omega/G.
\eqn
Unless the $G$-action on $\Xbf$ is free, the reduced space $\widetilde \Omega$ will in general not be a smooth manifold, but a  topological quotient space. Nevertheless, one can show that $\widetilde \Omega$ constitutes  a stratified   symplectic space in the following sense. A function $\widetilde f: \widetilde \Omega \rightarrow \R$ is defined to be \emph{smooth}, if there exists a $G$-invariant function $f\in \Cinft(\Xbf)^G$ such that $f|_{\Omega}=\pi^\ast \widetilde f$, where $\pi:\Omega \rightarrow \widetilde \Omega$ denotes the orbit map. One can then show that  $\Cinft(\widetilde \Omega)$ inherits a Poisson algebra structure from $\Cinft(\Xbf)$ which is compatible with a stratification of the reduced space into symplectic manifolds. Moreover, the Hamiltonian flow $\varphi_t$ corresponding to $f$ is $G$-invariant and leaves $\Omega$ invariant, and consequently descends to a flow $\widetilde \varphi_t$ on $\widetilde \Omega$ \cite{lerman-sjamaar}.

More precisely, let $\mu$ be  a value of $\Jbb$, and  $G_\mu$ the isotropy group of $\mu$ with respect to the co-adjoint action on $\g^\ast$. Consider further an isotropy group $K\subset G$ of the $G$-action on $\Xbf$, let $\eta\in \Jbb^{-1}(\{\mu\})$ be such that $G_\eta=K$, and $\Xbf^\eta_K$ be the  connected component of $\Xbf_K:=\mklm{\zeta \in \Xbf:G_\zeta =K}$ containing $\eta$. Then \cite[Theorem 8.1.1]{ortega-ratiu}
the set  $\Jbb^{-1}(\{\mu\}) \cap G_\mu \cdot \Xbf^\eta_K$ is a smooth submanifold of $\Xbf$, and the quotient
\[
\widetilde{\Omega}_\mu^{(K)}:= \big(\Jbb^{-1}(\{\mu\}) \cap G_\mu \cdot \Xbf^\eta_K\big)\big / G_\mu
\]
possesses a differentiable structure such that the projection $\pi_\mu^{(K)}:\Jbb^{-1}(\{\mu\}) \cap G_\mu \cdot \Xbf^\eta_K \rightarrow \widetilde{\Omega}_\mu^{(K)}$ is a surjective submersion. Furthermore, there exists a unique symplectic form $\widetilde{\omega}_\mu^{(K)}$ on $\widetilde{\Omega}_\mu^{(K)}$ such that 
$
(\iota_\mu^{(K)})^\ast \omega = (\pi_\mu^{(K)})^\ast (\widetilde{\omega}_\mu^{(K)})
$,
where $\iota_\mu^{(K)}:  \Jbb^{-1}(\{\mu\}) \cap G_\mu \cdot \Xbf^\eta_K \hookrightarrow \Xbf$ denotes the inclusion.  
Finally, if $p \in \Cinft(\Xbf)$ is  a $G$-invariant function, $H_p:=\sgrad p$ its Hamiltonian vector field, and $\varphi_t$ the corresponding  flow, then $\varphi_t$ leaves invariant the components of $ \Jbb^{-1}(\{\mu\}) \cap G_\mu \cdot \Xbf^\eta_K$ and commutes with the $G_\mu$-action, yielding a reduced flow $\widetilde{\varphi}_t ^\mu$ on $\widetilde{\Omega}_\mu^{(K)}$ given by
\bq
\label{eq:11.06.2015}
\pi_\mu^{(K)} \circ \varphi_t \circ \iota_\mu^{(K)}=\widetilde{\varphi}_t^\mu \circ \pi_\mu^{(K)}. 
\eq
This reduced flow $\widetilde{\varphi}_t ^\mu$ on $\widetilde{\Omega}_\mu^{(K)}$ turns out to be Hamiltonian, and its Hamiltonian $\widetilde{p}_\mu^{(K)}:\widetilde{\Omega}_\mu^{(K)}\rightarrow \R$ satisfies 
$\widetilde{p}_\mu^{(K)} \circ \pi_\mu^{(K)} = p \circ \iota_\mu^{(K)}.$


\begin{rem}
With the notation above  we have $G\cdot \Xbf_K=\Xbf(K)$. Indeed, for $x\in \Xbf_K$, the isotropy group of $x$ is $K$. If $g'g\cdot x =g\cdot x $ for some $g,g'\in G$, then $g^{-1}g'g\cdot x =x$, hence $g^{-1}g'g\in K$, that is  $g'\in (K)$. That shows $G\cdot \Xbf_K\subset \Xbf(K)$.  On the other hand, if $x\in \Xbf(K)$, then $(G_x)=(K)$, hence for every $g'\in G_x$, there is a $k\in K$ and a $g\in G$ such that $g'=gkg^{-1}$. But then $kg^{-1}\cdot x=g^{-1}\cdot x$, so that $g^{-1}\cdot x\in \Xbf_K$, and in particular $x\in G\cdot \Xbf_K$.
\end{rem}

\begin{example}
\label{ex:2.2}
Let $G$ be a Lie group.  An important class of examples of Hamiltonian group actions is given by  induced actions  on co-tangent  bundles of $G$-manifolds. Thus, let $\Psi: G\times M \to M, (g,x) \to \Psi_g(x):=g\cdot x$  be  a smooth $G$-action on a smooth manifold $M$. The induced action on $T^\ast M$  is given by
\bqn 
(g\cdot \eta_x)(v)= ( (\Psi_{g^{-1}})_{g\cdot x}^\ast \cdot  \eta_x) (v)= \eta_{x} ( (\Psi_{g^{-1}})_{\ast,g\cdot x} \cdot v), \qquad \eta_x \in T^\ast_xM, \, v \in T_{g\cdot x}M,
\eqn
where $(\Psi_{g})_{\ast, x}:T_{x}M \to T_{g \cdot x}M$ denotes the derivative of the map $g:M \to M, x \mapsto g\cdot x$. 
Now, if  $\tau: \Xbf=T^{*}M {\rightarrow} M$ denotes the co-tangent  bundle  with standard symplectic form $\omega=-d\theta$, where $\theta$ is the \textit{tautological} or \textit{Liouville} one-form on $T^*M$, then 
\begin{eqnarray}
\label{eq:19.07.2015}
\Jbb: T^{*}M \ni  \eta  \mapsto \Jbb(\eta)(X):= \eta\big(\widetilde{X}_{\tau(\eta)}\big), \qquad X \in \g,
\end{eqnarray}
defines a co-adjoint equivariant momentum map, meaning that the $G$-action on $T^\ast M$ is Hamiltonian. Here $\widetilde{X}_{\tau(\eta)}$ denotes the fundamental vector field on $M$ corresponding to $X$ evaluated at the point $\tau(\eta)$. In the particular case when $M=G$ is itself a Lie group, and $L:G\times G \to G$ denotes the left action of $G$ onto itself, there exists a vector bundle isomorphism
\bq
\label{eq:lefttriv}
T^\ast G \stackrel{\simeq}\longrightarrow G \times \g^\ast, \qquad  \eta_g \mapsto (g, (L_{g})_e^\ast \cdot \eta_g),
\eq
called the \emph{left trivialization} of $T^\ast G$, and the induced left action takes the form
\bqn
g\cdot (h,\mu) = (gh,\mu), \qquad g,h \in G, \, \mu \in \g^\ast.
\eqn
Consequently, the decomposition of $T^\ast G$ into orbit types  of this action is given by the one of $G$ and 
\bqn 
(T^\ast G)(H)=T^\ast(G(H)),
\eqn
$H$ being an arbitrary closed subgroup of $G$. On the other hand,  the  momentum map reads $\Jbb(g,\mu)=\Ad^\ast_{g^{-1}} \mu$, since with $\mu=(L_g)^\ast_e  \cdot \eta_g$ one computes for $X \in \g$
\bqn 
\Jbb(g,\mu)(X) = \Jbb(\eta_g) (X)= (L_{g^{-1}})^\ast_g \, \mu (\widetilde X_g)=  \mu ( (L_{g^{-1}})_{\ast,g} \widetilde X_g)=\mu \Big ( \frac d{dt} ({g^{-1}}  \e{tX} g )_{|t=0}\Big )=\mu(\Ad(g^{-1}) X),
\eqn
compare \cite[Example 4.5.5]{ortega-ratiu}.
\end{example}

Let us now apply these general results to the situation of this paper. Thus, let $\Xbf=T^\ast M$, where   $M$ is a connected compact boundary-less Riemannian manifold of dimension $n$, carrying an isometric  effective action of a compact connected 
Lie group $G$. In all what follows, the principal isotropy type of the action will be denoted by $(H)$, $H$ being a closed subgroup of $G$,  and the dimension of the principal orbits in $M$ by $\kappa$. Furthermore, we shall always assume that $\kappa<n$. $T^\ast M$ constitutes a Hamiltonian $G$-space when endowed with the canonical symplectic structure and the $G$-action induced from the smooth action on $M$, and one has
\begin{equation}
\Omega=\Jbb^{-1}(\{0\})=\bigsqcup_{x\in M}\textrm{Ann}\,T_x(G\cdot x)\label{eq:omega},
\end{equation}
where $\textrm{Ann}\,V_x\subset T^\ast_x M$ denotes the annihilator of a  subspace $V_x \subset T_xM$. 
Further, let 
\begin{align*}
M_{\textrm{reg}}&:=M(H), \qquad 
\Omega_{\textrm{reg}}:=\Omega \cap (T^{*}M)(H),
\end{align*}
where $M(H)$ and $(T^{*}M)(H)$ denote the union of orbits of type $(H)$ in $M$ and $T^{*}M$, respectively. By the principal orbit theorem, $M_{\textrm{reg}}$ is open  in $M$, hence $M_{\textrm{reg}}$ is a smooth submanifold.  We then define
 $$\widetilde{M}_{\textrm{reg}}:=M_{\textrm{reg}}/G.$$
$\widetilde{M}_{\textrm{reg}}$   is a smooth boundary-less manifold, since $G$ acts on ${M}_{\textrm{reg}}$ with only one orbit type and $M_{\textrm{reg}}$ is open in $M$.  Moreover, because the Riemannian metric on $M$ is $G$-invariant, it induces a Riemannian metric on $\widetilde{M}_{\textrm{reg}}$. On the other hand, by symplectic reduction  $\Omega_{\textrm{reg}}$ is a smooth submanifold of $T^{*}M$, and  the quotient
$$
\widetilde{\Omega}_{\textrm{reg}}:= \Omega_{\textrm{reg}}/G
$$
possesses a unique differentiable structure such that the projection $\pi: \Omega_{\textrm{reg}} \rightarrow \widetilde{\Omega}_{\textrm{reg}}$ is a surjective submersion. Furthermore, there exists a unique symplectic form $\widetilde{\omega}$ on $\widetilde{\Omega}_{\textrm{reg}}$ such that $
\iota^\ast \omega = \pi^\ast \widetilde{\omega}$,
where $\iota:  \Omega_{\textrm{reg}} \hookrightarrow T^{*}M$ denotes the inclusion and $\omega$  the canonical symplectic form on $T^{*}M$. In addition, by co-tangent bundle reduction the two $2(n-\kappa)$-dimensional symplectic manifolds 
\bq
\label{eq:isomorphic}
(T^*M_\text{reg}\cap \Omega)/G \, \simeq \,  T^*\widetilde{M}_\text{reg}
\eq
 are canonically symplectomorphic. In case that $G$ acts on $M$ only with finite isotropy groups, $\widetilde M:=M/G$ is an orbifold, and  the relation above constitutes the quotient presentation of the co-tangent bundle of $\widetilde M$ as an orbifold. 

\subsection{Generalized equivariant semiclassical Weyl law}
Let  $M$ be a  compact  Riemannian manifold of dimension $n$,  
 and denote by $\Psi_h^{m}(M)$ the $\C$-linear space of all semiclassical pseudodifferential operators on $M$ of order $m$, and by $S^m(M)$ the corresponding space of symbols, where $m \in \Z,\; h\in(0,1]$. In what follows, we shall write $$\Psi^{-\infty}_h(M)= \bigcap_{m\in \Z} \Psi^{m}_h(M),\qquad \Psi^{m}(M):=\Psi_1^{m}(M),$$ the latter being the linear space of ordinary pseudodifferential operators on $M$ of order $m$.  Similarly,  we write
 $\mathrm{Op}:=\mathrm{Op}_1$ for the ordinary quantization of non-semiclassical symbol functions.

The main result from Part I is a generalized equivariant semiclassical Weyl law that will be crucial in our study of equivariant quantum ergodicity. To state it, assume that $M$ carries an isometric effective action of a compact connected Lie group $G$ with principal orbits of dimension $\kappa<n$.  Let    \eqref{eq:15.08.2015} be  a Schr\"odinger operator on $M$ with real-valued, smooth, $G$-invariant potential  and Hamiltonian function \eqref{eq:hamiltonianfunction}. Consider further  the Peter-Weyl decomposition \ref{eq:PW} of the {left regular representation} of $G$    on $\L^2(M)$. Since the operator $P(h)$ commutes with the left-regular $G$-representation on $\L^2(M)$,    $P(h)_\chi=P(h)\circ T_\chi=T_\chi\circ P(h)$.  We then have the following\footnote{ For the precise definition of all relevant measures, the reader is referred to Section 2.4 of Part I.}  

 \begin{thm}[{\bf Generalized equivariant semiclassical Weyl law,  \cite[Theorem 4.1]{kuester-ramacher15a}}]
\label{thm:weyl2} 
Let $\beta\in \big(0,\frac{1}{2\kappa+4}\big)$ and choose an operator $B\in \Psi_{h}^0(M)\subset \B(\L^2(M))$ with principal symbol represented by $b\in S^0(M)$ and a semiclassical character family $\{\Wh\}_{h\in(0,1]}$ with growth rate $\vartheta<\frac{1-(2\kappa+4)\beta}{2\kappa+3}$.  Write
\[
J(h):=\big\{j\in \N:E_j(h)\in[c,c+h^\beta],\; \chi_j(h) \in \Wh\big\},
\]
where $\chi_j(h)\in \widehat G$ is defined by $u_j(h)\in \L^2_{\chi_j(h)}(M)$. Then, one has in the semiclassical limit $h\to 0$ \begin{align*}
\begin{split}
\frac{(2\pi)^{n-\kappa} h^{n-\kappa-\beta}}{\#\Wh}\sum_{J(h)}\frac{\langle Bu_{j}(h),u_{j}(h)\rangle_{\L^2(M)}}{d_{\chi_j(h)}\,[ \pi_{\chi_j(h)}|_{H}:\mathds{1}]}&=\intop_{{\Sigma}_c\cap \,\Omega_{\text{reg}}}b \, \frac {\d{\mu}_c}{\vol_\O}+\; \mathrm{O}\Big(h^{\beta}+h^{\frac{1-(2\kappa+3)\vartheta}{2\kappa +4}-\beta}\left (\log h^{-1}\right)^{{\Lambda^{}}-1}\Big).
\end{split}
\end{align*}
\end{thm}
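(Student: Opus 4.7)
The strategy is to convert the spectral sum into a regularized trace, reduce it to a $G$-oscillatory integral via Peter-Weyl, and evaluate the latter by equivariant stationary phase in its singular form using the resolution of singularities from \cite{ramacher10}.

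First I would express the spectral sum as a trace. Because $P(h)$ commutes with $T_\chi$ and each $u_j(h)$ lies in a single isotypic component, the numerator on the left-hand side equals
\bqn
\sum_{\chi\in\Wh}\frac{1}{d_\chi[\pi_\chi|_H:\mathds{1}]}\,\Tr\bigl(B\,T_\chi\,\mathds{1}_{[c,c+h^\beta]}(P(h))\bigr).
\eqn
Since the sharp indicator of a $h^\beta$-window is incompatible with pseudodifferential calculus, I would sandwich it between smooth approximations whose supports differ from $[c,c+h^\beta]$ only by $O(h)$ and invoke the $h$-dependent semiclassical functional calculus and attached Tauberian argument from \cite{kuester15}. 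This replaces $\mathds{1}_{[c,c+h^\beta]}(P(h))$ by an operator $\rho_h(P(h))$ whose semiclassical principal symbol is explicitly known, at the price of an $O(h^\beta)$ Tauberian remainder relative to the main order.

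Next, expanding $T_\chi$ via \eqref{eq:groupproj} gives
\bqn
\Tr\bigl(B\,\rho_h(P(h))\,T_\chi\bigr) = d_\chi\int_G \overline{\chi(g)}\,\Tr\bigl(B\,\rho_h(P(h))\,L_g\bigr)\,dg.
\eqn
Writing the inner trace as the diagonal integral of the Schwartz kernel of $B\,\rho_h(P(h))\,L_g$, one obtains in local coordinates, modulo negligible contributions, an oscillatory integral of the form
\bqn
d_\chi(2\pi h)^{-n}\int_G\int_{T^*M} e^{i\Phi(x,\xi,g)/h}\,a_h(x,\xi)\,\overline{\chi(g)}\,d(T^*M)\,dg,
\eqn
with phase $\Phi(x,\xi,g)=\langle\exp_x^{-1}(g\cdot x),\xi\rangle$ and amplitude involving $b\cdot\rho_h(p)$. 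The critical set of $\Phi$ in the fibre variables $(g,\xi)$ consists of pairs with $g\in G_x$ and $\xi\in\text{Ann}\,T_x(G\cdot x)$, which by \eqref{eq:omega} is exactly $\Omega=\Jbb^{-1}(\{0\})$.

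Because non-principal isotropy forces $\Omega$ to be a stratified rather than smooth critical locus, ordinary stationary phase fails on the full set, and here I would invoke the singular equivariant asymptotics of \cite{ramacher10}. After iterated blow-ups along strata of decreasing dimension, the lifted phase becomes clean on each chart, ordinary stationary phase can be applied there, and the local contributions are pushed forward. Summation, together with a Weyl-integration identity on $G$ producing the multiplicity factor $d_\chi[\pi_\chi|_H:\mathds{1}]$ and a Duistermaat-Heckman-type Jacobian producing $\vol_\O^{-1}$, yields the leading term
\bqn
\frac{d_\chi[\pi_\chi|_H:\mathds{1}]\,h^\beta}{(2\pi)^{n-\kappa}h^{n-\kappa}}\int_{\Sigma_c\cap\Omega_{\text{reg}}} b\,\frac{d\mu_c}{\vol_\O}\,(1+o(1)).
\eqn
Dividing by $d_\chi[\pi_\chi|_H:\mathds{1}]$, summing over $\chi\in\Wh$, and multiplying by the prefactor $(2\pi)^{n-\kappa}h^{n-\kappa-\beta}/\#\Wh$ reproduces the stated main term.

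For the remainder I would combine three contributions: the Tauberian step costs $O(h^\beta)$; the desingularization of \cite{ramacher10} contributes a factor $h^{1/(2\kappa+4)}(\log h^{-1})^{\Lambda-1}$ relative to the main order; and when Leibniz-differentiating $\overline{\chi(g)}a_h$ during the clean stationary phase the character derivatives $\|D^N\overline\chi\|_\infty$ must be controlled, which is exactly what Definition~\ref{def:family} does via the average bound $h^{-\vartheta N}$. Tracking the order $N$ dictated by the stationary-phase expansion against the desingularization scale produces the extra $h^{-(2\kappa+3)\vartheta/(2\kappa+4)}$, and combining all three gives the claimed $O(h^\beta + h^{(1-(2\kappa+3)\vartheta)/(2\kappa+4)-\beta}(\log h^{-1})^{\Lambda-1})$. \textbf{The hard part} is precisely the singular stationary phase with the three shrinking scales entering simultaneously (an $h^\beta$-wide spectral window, an $h$-dependent symbol, and $h^{-\vartheta}$-growing character derivatives); its control is what forces the sharp hypotheses $\beta<1/(2\kappa+4)$ and $\vartheta<(1-(2\kappa+4)\beta)/(2\kappa+3)$.
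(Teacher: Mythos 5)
Your outline coincides with the route the paper itself indicates for Theorem \ref{thm:weyl2}: reduction via the $h$-dependent functional calculus of \cite{kuester15} to oscillatory integrals over $G\times T^*U$ with phase $\Phi(x,\xi,g)=\langle\gamma(x)-\gamma(g\cdot x),\xi\rangle$, whose singular critical set ($\Omega=\Jbb^{-1}(\{0\})$ together with the isotropy condition) is handled by the partial desingularization and stationary phase in the resolution space from \cite{ramacher10}, with the three competing scales $h^\beta$, $h^{-\vartheta}$ and the desingularization gain producing exactly the stated remainder. Note that this paper does not prove the theorem itself but imports it from Part I \cite{kuester-ramacher15a}, so your sketch is a reconstruction of that external argument rather than of a proof contained here; as such it is consistent with the strategy described in the text.
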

\begin{rem}\label{rem:0304}
The integral in the leading term  can be written as   $\intop_{\widetilde{\Sigma}_c}\widetilde {\eklm{b}}_G\, \d\widetilde{\Sigma}_c$. In case that $\widetilde M$ is an orbifold, it  is given by an integral over the orbifold bundle $S^\ast_{\widetilde p,c}(\widetilde M):=\mklm{(x,\xi) \in T^\ast \widetilde M: \widetilde p(x,\xi)=c}$, compare Remark 4.2 of Part I.
\end{rem}

 The proof of Theorem \ref{thm:weyl2}  relies on a semiclassical calculus for $h$-dependent functions developed in \cite{kuester15}, and the description of  the asymptotic behavior of  certain oscillatory integrals that are locally of the form 
\begin{equation*}
I(\mu)=\intop_{T^{*}U}\int_{G}e^{i\mu\Phi(x,\xi,g)}a_\mu(x,\xi,g)\, dg\, d\left(T^{*}U\right)(x,\xi), \qquad \mu \to +\infty, 
\end{equation*}
where $(\gamma,U)$ denotes a local chart, $dg$ normalized Haar measure on $G$,  $d(T^\ast U)$ the canonical volume form on $T^\ast U$,  $a_\mu\in \CT(T^{*}U\times G)$ is an amplitude that might depend on the parameter  $\mu{>0}$ such that $(x,\xi,g) \in \supp a_\mu$ implies $g\cdot x \in U$, and 
\bq\Phi(x,\xi,g):=\left\langle \gamma(x)-\gamma(g\cdot x ),\xi\right\rangle.
\label{eq:phase} 
\eq
The major difficulty here resides in the fact that, unless the $G$-action on $T^\ast M$ is free,  the critical set of the phase function $\Phi$ is not a smooth manifold. The stationary phase theorem can therefore not immediately be applied to the integrals  $I(\mu)$. Nevertheless, it was shown in \cite{ramacher10, ramacher15a}   that by constructing  a partial desingularization  of the critical  set, and applying the stationary phase theorem in the resolution space, an asymptotic description of $I(\mu)$ can be obtained.

\section{Symmetry-reduced classical ergodicity}\label{sec:classicalergod}

We begin now with our study of ergodicity, and first turn to the examination of classical ergodicity in the presence of symmetries within the framework of symplectic reduction. As we already mentioned, the latter is based on the fundamental fact that the  presence of conserved quantities or first integrals of motion leads to the elimination of variables, and reduces the given configuration space with its symmetries to a lower-dimensional one, in which the degeneracies and the conserved quantitites have been eliminated. In particular, the Hamiltonian flows associated to $G$-invariant Hamiltonians give rise to corresponding reduced Hamiltonian flows on the different symplectic strata of the reduction. Therefore, the concept of ergodicity can be studied naturally in the context of symplectic reduction, leading to  a symmetry-reduced notion of ergodicity. 

Recall that, in general, a measure-preserving transformation $T:\Xbf \to \Xbf$ on a finite measure space $(\Xbf,\mu)$ is called \emph{ergodic} if $T^{-1}(A)=A$ implies $\mu(A)\in\{0,\mu(\Xbf)\}$ for every measurable set $A\subset \Xbf$. Consider now a connected, symplectic manifold $(\Xbf,\omega)$ with a global Hamiltonian action of a Lie group $G$, and let $\Jbb:\Xbf  \to  \g^\ast,\, \Jbb(\eta)(X)=  \Jbb_X(\eta)$ be the corresponding momentum map. As already noted in Section \ref{sec:2.3}, for each $X \in \g$ the function $\Jbb_X$ is a {conserved quantity}  for any $G$-invariant function $p \in \Cinft(\Xbf,\R)$, so that $\mklm{\Jbb_X, p}=0$. This implies that for any value $\mu$ of $\Jbb$, the fiber $\Jbb^{-1}(\{\mu\})$ is invariant under the Hamiltonian flow of $p$, which means that  $\Jbb$ fulfills \emph{Noether's condition}. In particular, if $c \in \R$ is a regular value of $p$ and $\Sigma_c:=p^{-1}(\{c\})$, the pre-image under $\Jbb$ of any open proper subset
 in $\Jbb(\Sigma_c)$  will be an open proper subset in $\Sigma_c$ that is invariant under the Hamiltonian flow of $p$, so the latter cannot be ergodic with respect to the induced Liouville measure on $\Sigma_c$, unless $G$ is trivial. 

Let now $p$ and $\mu$ be fixed, $K\subset G$ an isotropy group  of the $G$-action on $\Xbf$, and    $\eta\in \Jbb^{-1}(\{\mu\})$. With the notation as in Section \ref{sec:2.3}, let  $c \in \R$, and put $\widetilde{\Sigma}_{\mu,c}^{(K)}:=(\widetilde{p}_\mu^{(K)})^{-1} (\{c\})$. Let $\widetilde{g}$  be a Riemannian metric on $\widetilde{\Omega}_\mu^{(K)}$ and $\mathcal{J}:T\widetilde{\Omega}_\mu^{(K)} \rightarrow T\widetilde{\Omega}_\mu^{(K)}$ the almost complex structure determined by $\widetilde{\omega}_\mu^{(K)}$ and $\widetilde{g}$, so that $(\widetilde{\Omega}_\mu^{(K)},\mathcal{J},\widetilde{g})$ becomes an almost Hermitian manifold. We then make the following  

\begin{assumption}\label{assumption:1}
 $c$ is a regular value of  $\widetilde{p}_\mu^{(K)}$.
\end{assumption}

Note that this assumption is implied by the condition  
 that for all $\xi \in \Jbb^{-1}(\{\mu\}) \cap G_\mu \cdot \Xbf^\eta_K \cap \Sigma_c$ one has 
\[
H_p(\xi) \notin \g_\mu\cdot \xi,
\]
where $\g_\mu$ denotes the Lie algebra of $G_\mu$. Indeed, assume that there exists some $[\xi]\in \widetilde{\Sigma}_{\mu,c}^{(K)}$ such that  $\grad \widetilde{p}_\mu^{(K)}([\xi])=0$. Since 
\begin{align*}
\widetilde{\omega}_\mu^{(K)}(\sgrad \widetilde{p}_\mu^{(K)}, \X)=d\widetilde{p}_\mu^{(K)}(\X)=\widetilde{g}(\grad \widetilde{p}_\mu^{(K)}, \X),
\end{align*}
we infer that $H_{\widetilde{p}_\mu^{(K)}}([\xi])=\sgrad \widetilde{p}_\mu^{(K)}([\xi])=0$, which means that $[\xi]\in \widetilde{\Sigma}_{\mu,c}^{(K)}$  is a stationary point for the reduced flow, so that 
$
\widetilde{\varphi}_t^\mu([\xi]) = [\xi]$ for all $t 
\in \R$.  
By \eqref{eq:11.06.2015}, this is equivalent to 
\[
\pi_\mu^{(K)} \circ \varphi_t \circ \iota_\mu^{(K)}(\xi')= \widetilde{\varphi}_t^\mu([\xi]) \qquad \forall \, t \in \R,\; \xi' \in G_\mu \cdot \xi,
\]
which in turn is equivalent to
$
\varphi_t \circ \iota_\mu^{(K)}(\xi') \in G_\mu \cdot \xi'
$. 
Thus, there exists a $G_\mu$-orbit in $ \Jbb^{-1}(\{\mu\}) \cap G_\mu \cdot \Xbf^\eta_K\cap \Sigma_c$ which is invariant under $\varphi_t$. In particular one has
$
H_p(\xi') \in \g_\mu \cdot \xi' $ for all $\xi' \in  G_\mu \cdot \xi.
$ 

Assumption \ref{assumption:1} ensures that $\widetilde{\Sigma}_{\mu,c}^{(K)}$ is a smooth submanifold of $\widetilde{\Omega}_\mu^{(K)}$. Equipping $\widetilde{\Omega}_\mu^{(K)}$ with the symplectic volume form defined by the unique symplectic form on $\widetilde{\Omega}_\mu^{(K)}$ described in Section \ref{sec:2.3}, there is a unique induced hypersurface measure $\nu_{\mu,c}^{(K)}$ on $\widetilde{\Sigma}_{\mu,c}^{(K)}$, see Lemma A8 of Part I.  Moreover,  $\nu_{\mu,c}^{(K)}$ is invariant under the reduced flow $\widetilde{\varphi}_t^\mu$, since the latter constitutes a symplectomorphism due to Cartan's homotopy formula. Suppose now that the hypersurface $\widetilde{\Sigma}_{\mu,c}^{(K)}$ has finite volume with respect to the measure $\nu_{\mu,c}^{(K)}$. It is then natural to make the following

\begin{definition}
The reduced flow $\widetilde{\varphi}_t^\mu$ is called \emph{ergodic on $\widetilde{\Sigma}_{\mu,c}^{(K)}$} if for any measurable subset $E\subset  \widetilde{\Sigma}_{\mu,c}^{(K)}$  with $\widetilde{\varphi}_t^\mu(E)=E$ one has
\[
\nu_{\mu,c}^{(K)}(E)=0 \qquad \text{or} \qquad \nu_{\mu,c}^{(K)}(E)= \nu_{\mu,c}^{(K)}(\widetilde{\Sigma}_{\mu,c}^{(K)}). 
\]
\end{definition}

We can now formulate 

\begin{thm}[\bf Symmetry-reduced mean ergodic theorem]\label{thm:meanergod}
Let Assumption \ref{assumption:1} above be fulfilled, and suppose that $\widetilde{\Sigma}_{\mu,c}^{(K)}$ has finite volume with respect to its hypersurface measure $\nu_{\mu,c}^{(K)}$, and that the reduced flow $\widetilde{\varphi}_t^\mu$ is ergodic on $\widetilde{\Sigma}_{\mu,c}^{(K)}$. Then, for each $f\in \L^2\big(\widetilde{\Sigma}_{\mu,c}^{(K)},d\nu_{\mu,c}^{(K)}\big)$ we have
\[
 \left\langle f\right\rangle _{T} \quad \overset{T\to\infty}{\longrightarrow} \quad \frac 1 {\nu_{\mu,c}^{(K)}(\widetilde{\Sigma}_{\mu,c}^{(K)})}\intop_{\widetilde{\Sigma}_{\mu,c}^{(K)}}f\, d\nu_{\mu,c}^{(K)}
\]
with respect to the norm topology of $\L^2\big(\widetilde{\Sigma}_{\mu,c}^{(K)},d\nu_{\mu,c}^{(K)}\big)$,
where
\[
\left\langle f\right\rangle _{T}([\mu]):=\frac{1}{T}\int_{0}^{T}f\left(\widetilde{\varphi}_t^\mu([\mu])\right)dt,\quad[\mu]\in\widetilde{\Sigma}_{\mu,c}^{(K)}.
\]
\end{thm}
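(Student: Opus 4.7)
The plan is to realize this as an instance of the classical von Neumann mean ergodic theorem applied to the one-parameter unitary group induced by the reduced flow on $\L^2\big(\widetilde{\Sigma}_{\mu,c}^{(K)},d\nu_{\mu,c}^{(K)}\big)$.

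First I would set up the unitary group. As already noted just before the theorem, the reduced flow $\widetilde{\varphi}_t^\mu$ restricts to a smooth flow on the smooth hypersurface $\widetilde{\Sigma}_{\mu,c}^{(K)}$ and preserves the measure $\nu_{\mu,c}^{(K)}$ (this follows from Cartan's homotopy formula together with the fact that $\widetilde{\varphi}_t^\mu$ is a symplectomorphism of $\widetilde{\Omega}_\mu^{(K)}$ leaving $\widetilde{p}_\mu^{(K)}$, and hence $\widetilde{\Sigma}_{\mu,c}^{(K)}$, invariant). Consequently, the pullback
\[
U_t:\L^2\big(\widetilde{\Sigma}_{\mu,c}^{(K)},d\nu_{\mu,c}^{(K)}\big)\longrightarrow \L^2\big(\widetilde{\Sigma}_{\mu,c}^{(K)},d\nu_{\mu,c}^{(K)}\big),\qquad U_t f := f\circ \widetilde{\varphi}_t^\mu,
\]
defines a one-parameter group of unitary operators. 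Strong continuity $U_t f\to f$ in $\L^2$ as $t\to 0$ is checked first on $\Cinft_c(\widetilde{\Sigma}_{\mu,c}^{(K)})$, where it follows from smoothness of the flow together with the finiteness of $\nu_{\mu,c}^{(K)}(\widetilde{\Sigma}_{\mu,c}^{(K)})$ and dominated convergence, and then extended to all of $\L^2$ by density and the uniform bound $\|U_t\|=1$.

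Next I would invoke the von Neumann mean ergodic theorem for the strongly continuous unitary group $\{U_t\}_{t\in\R}$: the Ces\`aro averages
\[
A_T f:=\frac{1}{T}\int_0^T U_t f \, dt
\]
converge, as $T\to\infty$, in $\L^2$-norm to the orthogonal projection $Pf$ onto the subspace $\H_{\mathrm{inv}}:=\{g\in \L^2: U_t g = g \text{ for all } t\}$ of flow-invariant $\L^2$-functions. It remains to identify $\H_{\mathrm{inv}}$. If $g\in\H_{\mathrm{inv}}$, then for each $\alpha\in\R$ the level set $E_\alpha:=\{g>\alpha\}$ is $\widetilde{\varphi}_t^\mu$-invariant up to null sets; ergodicity forces $\nu_{\mu,c}^{(K)}(E_\alpha)\in\{0,\nu_{\mu,c}^{(K)}(\widetilde{\Sigma}_{\mu,c}^{(K)})\}$, so $g$ is $\nu_{\mu,c}^{(K)}$-almost everywhere constant. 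Therefore $\H_{\mathrm{inv}}$ consists precisely of the constant functions, and since $P$ is the orthogonal projection onto the constants on a finite measure space,
\[
Pf \;=\; \frac{1}{\nu_{\mu,c}^{(K)}(\widetilde{\Sigma}_{\mu,c}^{(K)})}\intop_{\widetilde{\Sigma}_{\mu,c}^{(K)}}f\, d\nu_{\mu,c}^{(K)}.
\]
Combining this with $A_T f = \langle f\rangle_T$ yields the claim.

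The only genuinely non-routine point is the identification of $\H_{\mathrm{inv}}$ with the constants from the ergodicity hypothesis, which is standard but requires passing from continuous-time invariance of $g$ to measurable invariance of its super-level sets; this can be done by choosing a Borel representative of $g$ and using Fubini together with the fact that, for almost every $x$, the orbit map $t\mapsto g(\widetilde{\varphi}_t^\mu(x))$ is almost everywhere constant. All other steps reduce to the standard functional-analytic setup for von Neumann's theorem, so I do not anticipate a significant obstacle.
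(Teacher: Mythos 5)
Your proof is correct and is exactly the classical argument the paper invokes by reference: the paper's proof simply cites the standard mean ergodic theorem (Zworski, Theorem 15.1), which is precisely the von Neumann argument you have written out — realize the flow as a strongly continuous unitary group, apply the mean ergodic theorem, and use ergodicity to identify the invariant subspace with the constants.
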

\begin{proof}
The proof is completely analogous to the existing proofs of the classical mean ergodic theorem, compare e.g. \cite[Theorem 15.1]{zworski}.
\end{proof}

In all what follows, we shall apply the general results outlined above to the case where $\Xbf=T^*M$ with $M$ and $G$ as in the introduction, $\mu=0$, $K=H$ is given by a principal isotropy group, and $p$ is the Hamiltonian function (\ref{eq:hamiltonianfunction}). We shall then use the simpler notation 
\[
\widetilde{\Omega}_\text{reg} = \widetilde{\Omega}_0^{(H)},\quad
\widetilde{\varphi}_t = \widetilde{\varphi}^0_t, \quad
\widetilde{\Sigma}_c = \widetilde{\Sigma}_{0,c}^{(H)}, \quad
d\widetilde{\Sigma}_c  = d\nu_{0,c}^{(H)}, \quad
\widetilde{p} = \widetilde{p}_0^{(H)}.
\]
As a special case of Theorem \ref{thm:meanergod} we  get the following 
\begin{thm}
\label{thm:classicalergod} Suppose that the reduced flow $\widetilde{\varphi}_{t}$ is ergodic on $\big(\widetilde{\Sigma}_{c},d\widetilde{\Sigma}_c\big)$.
Then for each $f\in \L^2\big(\widetilde{\Sigma}_{c},d\widetilde{\Sigma}_c\big)$,
\[
\lim_{T\to\infty}\int_{\widetilde{\Sigma}_{c}}\Big(\left\langle f\right\rangle _{T}-\fint_{\widetilde{\Sigma}_{c}}f\, d\widetilde{\Sigma}_c\Big)^{2}d\widetilde{\Sigma}_c=0.
\]
\end{thm}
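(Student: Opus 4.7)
My plan is to recognize this statement as the special case $\mu=0$, $K=H$ (with $H$ a principal isotropy subgroup) of the symmetry-reduced mean ergodic theorem, Theorem \ref{thm:meanergod}. Upon expanding the square, the integral in the conclusion is nothing but the squared $\L^2(\widetilde{\Sigma}_c, d\widetilde{\Sigma}_c)$-norm of the difference $\langle f\rangle_T - \fint_{\widetilde{\Sigma}_c} f\, d\widetilde{\Sigma}_c$, so the assertion is precisely the squared version of the norm convergence furnished by Theorem \ref{thm:meanergod}. The task is therefore to verify the hypotheses of that theorem and invoke it.

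First I would check the finite-volume condition. Since $M$ is compact and $V\in \Cinft(M,\R)$ is bounded, the level set $\Sigma_c=p^{-1}(\{c\})$ is contained in the compact subset $\{(x,\xi)\in T^*M:\|\xi\|_x^2\leq c-\min_M V\}$ of $T^*M$ and is therefore itself compact. Consequently $\Sigma_c\cap\Omega_{\text{reg}}$ has finite hypersurface measure $d\mu_c$, and the induced measure $d\widetilde{\Sigma}_c$ on the quotient $\widetilde{\Sigma}_c=(\Sigma_c\cap\Omega_{\text{reg}})/G$ is finite as well. The ergodicity of $\widetilde{\varphi}_t$ on $(\widetilde{\Sigma}_c, d\widetilde{\Sigma}_c)$ is imposed by hypothesis, so all assumptions of Theorem \ref{thm:meanergod} are in place.

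Since the proof of Theorem \ref{thm:meanergod} was deferred to the classical literature, it is worth briefly recalling the mechanism. Because $\widetilde{\varphi}_t$ is a symplectomorphism of $(\widetilde{\Omega}_{\text{reg}},\widetilde{\omega})$ by Cartan's magic formula, it preserves the induced hypersurface measure $d\widetilde{\Sigma}_c$, as already noted in the paragraph following Assumption \ref{assumption:1}. Thus $U_t f := f\circ \widetilde{\varphi}_t$ defines a strongly continuous one-parameter unitary group on $\L^2(\widetilde{\Sigma}_c, d\widetilde{\Sigma}_c)$, and the time-average contractions $M_T:=\frac{1}{T}\int_0^T U_t\, dt$ act on $f$ precisely as $\langle f\rangle_T$. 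The von Neumann mean ergodic theorem then gives strong $\L^2$-convergence $M_T\to \Pi$, where $\Pi$ is the orthogonal projection onto the subspace $\mathcal{I}\subset \L^2(\widetilde{\Sigma}_c)$ of $\{U_t\}$-invariant functions; the standard argument decomposes $\L^2(\widetilde{\Sigma}_c)=\mathcal{I}\oplus \overline{\mathrm{span}\{(U_t-\mathrm{id})g:g\in \L^2,\,t\in \R\}}$, observing that on the second summand $M_T(U_s-\mathrm{id})g$ telescopes and its $\L^2$-norm vanishes as $T\to\infty$. By the ergodicity hypothesis, $\mathcal{I}$ is one-dimensional and consists of the almost-everywhere constant functions, so $\Pi f = \fint_{\widetilde{\Sigma}_c} f\, d\widetilde{\Sigma}_c$, and squaring yields the claim.

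There is no real obstacle here: the measure-preservation of $\widetilde{\varphi}_t$ on $\widetilde{\Sigma}_c$ is built into the framework of symplectic reduction reviewed in Section \ref{sec:2.3}, and once this is granted the argument reduces to a textbook application of the mean ergodic theorem. The only mildly non-trivial point is keeping track of the fact that the canonical symplectic form $\widetilde{\omega}$ on the reduced space, and hence the hypersurface measure $d\widetilde{\Sigma}_c$ on $\widetilde{\Sigma}_c\subset \widetilde{\Omega}_{\mathrm{reg}}$, is the same object whose invariance under $\widetilde{\varphi}_t$ justifies unitarity of $U_t$.
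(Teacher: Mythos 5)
Your proposal follows the paper exactly: Theorem~\ref{thm:classicalergod} is obtained as the special case $\mu=0$, $K=H$ of Theorem~\ref{thm:meanergod}, whose proof is in turn deferred to the classical mean ergodic theorem (cf.\ \cite[Theorem 15.1]{zworski}), and your recollection of the von Neumann argument is the standard content of that citation. One small caveat on your finite-volume check: since $d\mu_c$ is the hypersurface density on $\Sigma_c\cap\Omega_{\text{reg}}$ inside the lower-dimensional stratum $\Omega_{\text{reg}}$ (not the restriction of the hypersurface measure on $\Sigma_c\subset T^*M$), compactness of $\Sigma_c$ alone does not immediately give finiteness of $d\widetilde{\Sigma}_c$; this is instead guaranteed by the finiteness of the leading Weyl coefficient in Theorem~\ref{thm:weyl2}, which the paper takes for granted.
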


\begin{rem}
Note that if $\widetilde M$ is an orbifold, the ergodicity of the reduced flow $\widetilde{\varphi}_{t}$  on $\big(\widetilde{\Sigma}_{c},d\widetilde{\Sigma}_c\big)$ is equivalent to the ergodicity of the corresponding Hamiltonian flow on the orbifold bundle $S^\ast_{\widetilde p,c} (\widetilde M)=\mklm{(x,\xi) \in T^\ast (\widetilde M): \widetilde p(x,\xi)=c}$ with respect to Liouville measure. 
\end{rem}

\label{subsec:symred}
Next, we examine the relation between classical time evolution and symmetry reduction.  Let $a\in \Cinft (T^*M)$. For a $G$-equivariant diffeomorphism $\Phi:T^*M\to T^*M$, we have \[
\left<a\circ \Phi\right>_G(\eta)=\int_G a(\Phi(g\cdot \eta))\,dg=\int_G a(g\cdot \Phi(\eta))\,dg = \left<a\right>_G(\Phi(\eta)), 
\]
so that $\left<a\circ \Phi\right>_G=\left<a\right>_G\circ \Phi$ and consequently $(\left<a\circ\Phi\right>_G)^{\widetilde{\; }}=(\left<a\right>_G\circ \Phi)^{\widetilde{\; }}$ holds. Now, we apply this result to the case $\Phi=\varphi_t$, where $\varphi_t$ is the Hamiltonian flow associated to the symbol function $p$ of the Schr\"odinger operator. If $i:\Omega_{\textrm{reg}}\hookrightarrow T^{*}M$ denotes the inclusion and $\pi:\Omega_{\textrm{reg}}\to\widetilde{\Omega}_{\textrm{reg}}$
the projection onto the $G$-orbit space,  we have $\pi\circ\varphi_{t}\circ i=\widetilde{\varphi}_{t}\circ\pi$.
Since 
\begin{eqnarray*}
\left<a\right>_G\circ\varphi_{t}\circ i =  \left(\left<a\right>_G\circ\varphi_{t}\right)^{\widetilde{\;}}\circ\pi,\qquad \left<a\right>_G\circ i  =  \widetilde{\left<a\right>}_G\circ\pi,\qquad\text{we get}
\end{eqnarray*}
\[
\widetilde{\left<a\right>}_G\circ\widetilde{\varphi}_{t}\circ\pi = \widetilde{\left<a\right>}_G\circ\pi\circ\varphi_{t}\circ i = \left<a\right>_G\circ i\circ\varphi_{t}\circ i=\left<a\right>_G\circ\varphi_{t}\circ i = \left(\left<a\right>_G\circ\varphi_{t}\right)^{\widetilde{\;}}\circ\pi,
\]
where we used that $i\circ\varphi_{t}\circ i=\varphi_{t}\circ i$. Since $\pi$ is surjective, we have shown

\begin{lem}
\label{lem:evolvred}Let $a\in \Cinft (T^*M)$ and $\varphi_{t}$ be the flow on $T^{*}M$
associated to the Hamiltonian $p$. Let $\widetilde{\varphi}_{t}$
be the reduced flow on $\widetilde{\Omega}_{\textrm{reg}}$ 
associated to $\widetilde{p}$. Then time evolution and reduction commute:
\[
\left(\left<a\right>_G\circ\varphi_{t}\right)^{\widetilde{\;}}=\widetilde{\left<a\right>}_G\circ\widetilde{\varphi}_{t}.
\]
\end{lem}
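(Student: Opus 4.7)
The plan is a short diagram chase that relies on three ingredients: (i) the $G$-equivariance of the Hamiltonian flow $\varphi_t$ associated to the $G$-invariant symbol $p$, (ii) the fact that $\Omega_{\text{reg}}$ is $\varphi_t$-invariant, and (iii) the defining intertwining identity $\pi \circ \varphi_t \circ i = \widetilde{\varphi}_t \circ \pi$ for the reduced flow.

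First I would establish the auxiliary identity $\langle a \circ \Phi \rangle_G = \langle a \rangle_G \circ \Phi$ for an arbitrary $G$-equivariant diffeomorphism $\Phi$ of $T^*M$. This is a direct change of variables in the Haar integral exploiting bi-invariance: $\langle a \circ \Phi\rangle_G(\eta) = \int_G a(\Phi(g \cdot \eta))\, dg = \int_G a(g \cdot \Phi(\eta))\, dg = \langle a\rangle_G(\Phi(\eta))$. Both sides are $G$-invariant and agree on $\Omega_{\text{reg}}$, so by the definition of the tilde-operation they descend to the same function on $\widetilde{\Omega}_{\text{reg}}$.

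Next I would specialize to $\Phi = \varphi_t$. Its $G$-equivariance is immediate from the $G$-invariance of the Hamiltonian $p$. To justify substituting into the identity above and to guarantee the small but crucial relation $i \circ \varphi_t \circ i = \varphi_t \circ i$, I would invoke Noether's condition from Section~\ref{sec:2.3}: since $p$ is $G$-invariant, $\{\mathbb{J}_X, p\} = 0$ for every $X \in \mathfrak{g}$, so $\Omega = \mathbb{J}^{-1}(\{0\})$ is $\varphi_t$-invariant; and since $\varphi_t$ is $G$-equivariant, it preserves orbit types, so it also preserves $\Omega_{\text{reg}} = \Omega \cap (T^*M)(H)$.

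Finally I would chain the defining identities $\widetilde{\langle a \rangle}_G \circ \pi = \langle a \rangle_G \circ i$ and $\pi \circ \varphi_t \circ i = \widetilde{\varphi}_t \circ \pi$, together with the auxiliary identity applied to $\Phi = \varphi_t$, to compute
\[
\widetilde{\langle a \rangle}_G \circ \widetilde{\varphi}_t \circ \pi \;=\; \widetilde{\langle a \rangle}_G \circ \pi \circ \varphi_t \circ i \;=\; \langle a \rangle_G \circ i \circ \varphi_t \circ i \;=\; \langle a \rangle_G \circ \varphi_t \circ i \;=\; (\langle a \rangle_G \circ \varphi_t)^{\widetilde{\;}} \circ \pi,
\]
and then cancel $\pi$ on the right by surjectivity of the orbit projection. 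I do not expect any real obstacle: the lemma is a formal consequence of the functoriality of symplectic reduction, and the only point requiring a moment of thought is the $\varphi_t$-invariance of $\Omega_{\text{reg}}$, which is handled by the Noether-type argument above.
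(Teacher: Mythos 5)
Your proof is correct and follows essentially the same route as the paper: the auxiliary identity $\langle a\circ\Phi\rangle_G=\langle a\rangle_G\circ\Phi$ via Haar invariance, specialization to $\Phi=\varphi_t$, and the diagram chase through $\pi\circ\varphi_t\circ i=\widetilde{\varphi}_t\circ\pi$ and $i\circ\varphi_t\circ i=\varphi_t\circ i$ followed by cancellation of the surjection $\pi$. If anything, you are slightly more careful than the paper in explicitly justifying the $\varphi_t$-invariance of $\Omega_{\text{reg}}$ via Noether's condition plus preservation of orbit types, a step the paper asserts without comment.
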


\section{Equivariant quantum ergodicity}

 We are now ready to formulate our first quantum ergodic theorem in a  symmetry-reduced context. Let the notation be as in the previous sections.

\begin{thm}[{\bf Integrated equivariant quantum ergodicity}]
\label{thm:ergod1} Suppose that the reduced flow $\widetilde{\varphi}_{t}$ corresponding to the reduced Hamiltonian function $\widetilde{p}$ is ergodic on $\widetilde{\Sigma}_c=\widetilde{p}^{-1}(\{c\})$. Let $A\in\Psi_h^{0}(M)$ be a semiclassical pseudodifferential operator with principal symbol $\sigma(A)=[a]$, where $a\in S^0(M)$ is independent of $h$. For a number $\beta\in\big(0,\frac{1}{2\kappa+4}\big)$ and a semiclassical character family $\{\Wh\}_{h\in(0,1]}$ with growth rate $\vartheta<\frac{1-(2\kappa+4)\beta}{2\kappa+3}$ set
\[
J(h):=\{j\in \N:E_j(h)\in[c,c+h^\beta],\; \chi_j(h) \in \Wh\},
\]
where $\chi_j(h)$ is defined by $u_j(h)\in \L^2_{\chi_j(h)}(M)$. Then, one has
\begin{equation}
\lim_{h\to0}\;\frac{h^{n-\kappa-\beta}}{\#\Wh}\sum_{J(h)}\frac{1}{d_{\chi_j(h)}[\pi_{\chi_j(h)}|_{H}:\mathds{1}]}\;\Big|\left\langle Au_{j}(h),u_{j}(h)\right\rangle_{\L^2(M)} -\fint_{\Sigma_c\cap \Omega_\text{reg}}a\, \frac{d\mu_c}{\vol_\O}\Big|^{2}\;=\;0,\label{eq:ergod1}
\end{equation}
\end{thm}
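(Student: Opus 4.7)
I would follow the template of the classical Shnirelman-Zelditch-Colin-de-Verdi\`ere argument, adapted to the symmetry-reduced setting by means of the equivariant semiclassical Weyl law (Theorem \ref{thm:weyl2}) and the symmetry-reduced mean ergodic theorem (Theorem \ref{thm:classicalergod}). Setting $B := A - \bar a\,\mathrm{id}$ with $\bar a := \fint_{\Sigma_c\cap\Omega_\text{reg}}a\,\frac{d\mu_c}{\vol_\O}$, I would for $T>0$ introduce the quantum time-averaged operator
\[
B_T := \frac{1}{T}\int_0^T e^{itP(h)/h}\,B\,e^{-itP(h)/h}\,dt.
\]
By a semiclassical Egorov theorem, $B_T \in \Psi_h^0(M)$ modulo lower-order terms, with principal symbol $\sigma(B_T) = \langle a\rangle_T - \bar a$, where $\langle a\rangle_T(\eta) := \tfrac{1}{T}\int_0^T a(\varphi_s\eta)\,ds$ is the classical Hamiltonian time average. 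Since $e^{-itP(h)/h}u_j(h) = e^{-itE_j(h)/h}u_j(h)$, one has $\langle Bu_j(h),u_j(h)\rangle = \langle B_Tu_j(h),u_j(h)\rangle$, and the Cauchy-Schwarz inequality together with $\|u_j(h)\|_{\L^2(M)}=1$ yields
\[
|\langle Au_j(h),u_j(h)\rangle - \bar a|^2 \;\leq\; \langle B_T^\ast B_T\,u_j(h),u_j(h)\rangle.
\]

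Summing over $j\in J(h)$ with the weights appearing in the statement and applying Theorem \ref{thm:weyl2} to the order-zero operator $B_T^\ast B_T$, whose principal symbol is $|\langle a\rangle_T - \bar a|^2$, I obtain for each fixed $T>0$
\begin{align*}
\limsup_{h\to 0}\;&\frac{h^{n-\kappa-\beta}}{\#\Wh}\sum_{J(h)}\frac{|\langle Au_j(h),u_j(h)\rangle - \bar a|^2}{d_{\chi_j(h)}[\pi_{\chi_j(h)}|_H:\mathds{1}]} \\
&\quad\leq\; \frac{1}{(2\pi)^{n-\kappa}}\int_{\Sigma_c\cap\Omega_\text{reg}}|\langle a\rangle_T - \bar a|^2\,\frac{d\mu_c}{\vol_\O}.
\end{align*}
Since the left-hand side is independent of $T$, the assertion will follow from the vanishing of the right-hand side as $T\to\infty$.

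To establish this vanishing I would exploit the disintegration $\pi_\ast(d\mu_c/\vol_\O) = d\widetilde{\Sigma}_c$ along the orbit projection $\pi:\Sigma_c\cap\Omega_\text{reg}\to\widetilde{\Sigma}_c$, together with Lemma \ref{lem:evolvred} identifying the reduction of the time average with the time average of the reduction, $\widetilde{\langle\langle a\rangle_T\rangle}_G = \langle\widetilde{\langle a\rangle}_G\rangle_T$ on $\widetilde{\Sigma}_c$. The symmetry-reduced mean ergodic theorem (Theorem \ref{thm:classicalergod}), applied to the $G$-invariant function $\widetilde{\langle a\rangle}_G\in \L^2(\widetilde{\Sigma}_c)$, then forces this quantity to $\bar a$ in $\L^2(\widetilde{\Sigma}_c,d\widetilde{\Sigma}_c)$. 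The \emph{main obstacle} will be to convert this convergence on the reduced space into convergence of $\langle a\rangle_T - \bar a$ in $\L^2(\Sigma_c\cap\Omega_\text{reg}, d\mu_c/\vol_\O)$, i.e.\ to control the intra-orbit fluctuation of $\langle a\rangle_T$ around its $G$-average. I expect to handle this by splitting $A = A_G + (A - A_G)$ with $A_G := \int_G T_g^\ast A T_g\,dg$: the squared sum then decomposes orthogonally, the cross-terms dropping out because $\sum_{k=1}^{d_\chi}\langle(A - A_G)u_{j,k},u_{j,k}\rangle$ over any orthonormal basis of a $G$-irreducible summand equals $\tr(A - A_G)|_{V} = 0$, since $A$ and $A_G$ have the same trace on $G$-invariant subspaces. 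This reduces the inter-orbit contribution to the $G$-invariant symbol $\langle a\rangle_G$, to which Theorem \ref{thm:classicalergod} applies directly, while the residual $G$-mean-zero piece coming from $A - A_G$ is handled by a further iteration of the Egorov-Cauchy-Schwarz-Weyl scheme. Taking $T\to\infty$ then concludes the proof.
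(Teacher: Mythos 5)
Your scaffold is the same as the paper's: set $B := A - \bar a$, time-average quantum-mechanically, use the weak Egorov theorem to pass to the classical average $\langle a\rangle_T$, bound $|\langle Au_j,u_j\rangle - \bar a|^2$ by $\langle B_T^\ast B_T u_j,u_j\rangle$ via Cauchy--Schwarz, apply the equivariant Weyl law (Theorem \ref{thm:weyl2}) to the averaged product, and close with the reduced mean ergodic theorem (Theorem \ref{thm:classicalergod}). So far this parallels the paper exactly. Where you diverge is the crucial point: you correctly observe that the Weyl law for $B_T^\ast B_T$ produces the \emph{unreduced} quantity $\int_{\Sigma_c\cap\Omega_{\mathrm{reg}}}|\langle a\rangle_T - \bar a|^2\,\frac{d\mu_c}{\vol_\O}$, equivalently $\int_{\widetilde\Sigma_c}\widetilde{\langle|\langle a\rangle_T - \bar a|^2\rangle}_G\,d\widetilde\Sigma_c$, whereas Theorem \ref{thm:classicalergod} controls the smaller quantity $\int_{\widetilde\Sigma_c}|\langle\widetilde{\langle a\rangle}_G\rangle_T - \bar a|^2\,d\widetilde\Sigma_c$. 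By Jensen these agree iff $\langle a\rangle_T$ is $G$-invariant, i.e.\ iff $a$ is. The paper's display \eqref{eq:limnew} writes the Weyl-law leading term directly as $\int_{\widetilde\Sigma_c}|\langle\widetilde{\langle b\rangle}_G\rangle_T|^2\,d\widetilde\Sigma_c$, silently interchanging the $G$-average with the modulus squared. You are right to flag this: as stated, that step is not a consequence of Lemma \ref{lem:evolvred} alone, and the gap you identify is genuine rather than a fussy technicality.

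However, your proposed repair does not close the gap. Two issues. First, the cross-term cancellation: you appeal to $\sum_k\langle(A-A_G)u_{j,k},u_{j,k}\rangle = \tr(A-A_G)|_V = 0$, but the cross-term in $\sum_k|\langle Au_{j,k},u_{j,k}\rangle - \bar a|^2$ is $\sum_k(\langle A_Gu_{j,k},u_{j,k}\rangle - \bar a)\,\overline{\langle(A-A_G)u_{j,k},u_{j,k}\rangle}$, which only factors through the trace if $\langle A_G u_{j,k},u_{j,k}\rangle$ is constant in $k$. That constancy does hold, but for a reason you need to supply: since $A_G$ commutes with $G$, the sesquilinear form $(u,v)\mapsto\langle A_G u,v\rangle$ on the irreducible $V$ is $G$-invariant, and by Schur's lemma such a form is a scalar multiple of $\langle\cdot,\cdot\rangle$; hence $\langle A_G u,u\rangle = c_V\|u\|^2$. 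Second, and more seriously, the residual cannot be dispatched by ``a further iteration of the Egorov--Cauchy--Schwarz--Weyl scheme.'' Running the same scheme on $C := A - A_G$ returns the bound $\int_{\Sigma_c\cap\Omega_{\mathrm{reg}}}|\langle a_1\rangle_T|^2\,\frac{d\mu_c}{\vol_\O}$ with $a_1 := a - \langle a\rangle_G$, and by the von Neumann ergodic theorem on the \emph{unreduced} measure space this converges as $T\to\infty$ to $\|P a_1\|^2$, where $P$ is the orthogonal projection onto $\varphi_t$-invariant functions in $\L^2(\Sigma_c\cap\Omega_{\mathrm{reg}},\frac{d\mu_c}{\vol_\O})$. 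Ergodicity of the \emph{reduced} flow only forces $G$-invariant $\varphi_t$-invariant functions to be constant; non-$G$-invariant invariants can and do survive (already for the round $S^2$ with the $\SO(2)$-action, where $\varphi_t$ is periodic on $\Sigma_1\cap\Omega_{\mathrm{reg}}$), so $Pa_1$ need not vanish. Iterating therefore reproduces the very obstacle it was meant to remove. A genuine fix needs a different ingredient -- for instance, exploiting the isotypic structure of the eigenfunctions more finely (as in the $d_\chi = 1$ case, where $\langle(A-A_G)u_j,u_j\rangle$ vanishes identically) -- rather than another pass through the same quadratic estimate.
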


\begin{rem}
Again, the integral in (\ref{eq:ergod1}) can also we written as $\fint_{\widetilde{\Sigma}_c}\widetilde{\eklm{a}}_G \d \widetilde{\Sigma}_c$, and if $\widetilde M$ is an orbifold, it can be written as an integral over $S^\ast_{\widetilde p,c} (\widetilde M)$, compare Remark \ref{rem:0304}.
\end{rem}

\begin{proof}
We shall adapt the existing proofs of quantum ergodicity to the equivariant situation, following mainly \cite[Theorem 15.4]{zworski}. 
Let us write $u_j(h)=u_j$ and $E_j(h)=E_j$, and $\rho\in \CT(\mathbb{R},[0,1])$ be
such that $\rho\equiv1$ in a neighbourhood of $c$. Without loss of generality we may assume for the rest of the proof that $h$ is small enough so that $\rho\equiv1$ on $[c,c+h^\beta]$. Set
\begin{equation}
B:=\rho(P(h))\circ \left(A-\alpha\,\mathds{1}_{\L^2(M)}\right)\label{eq:B}, \qquad \alpha:=\fint_{\Sigma_c\cap \Omega_\text{reg}}a\, \frac{d\mu_c}{\vol_\O}=\fint_{\widetilde{\Sigma}_c}\widetilde{\left<a\right>}_G \d \widetilde{\Sigma}_c,
\end{equation}
where $\widetilde{\left<a\right>}_G$ was defined in (\ref{eq:orbitalintegral}). Note that by the semiclassical calculus  we have $B\in\Psi_h^{-\infty}(M)$. Furthermore, 
\begin{align*}
\sigma(B)&=\left(\rho\circ\sigma(P(h))\right)\sigma\left(A-\alpha\,\mathds{1}_{\L^2(M)}\right)=\left[ \left(\rho\circ p\right) \left(a-\alpha\, 1_{T^*M}\right) \right] \in S^{-\infty}(M)\slash hS^{-\infty}(M),
\end{align*}
see  Section 2.1 of Part I.  Let us write $b:=\left(\rho\circ p\right) \left(a-\alpha\, 1_{T^*M}\right)$, so that $\sigma(B)=[b]$. Clearly,
\begin{equation}
\widetilde{\left<b\right>}_G=\left(\left(\rho\circ p\right)\left(\left<a\right>_G -\alpha\, 1_{T^*M}\right)\right)^{\widetilde{\;}}=\left(\rho\circ \widetilde{p}\right)\left(\left<a\right>_G-\alpha\, 1_{T^*M}\right)^{\widetilde{\;}}=\left(\rho\circ \widetilde{p}\right) \left(\widetilde{\left<a\right>}_G-\alpha\,1_{\widetilde{\Omega}_\text{reg}}\right).\label{eq:sigmatildeB}
\end{equation}
Next, we define
\begin{equation}
\Lcal(h):= \frac{(2\pi)^{n-\kappa} h^{n-\kappa-\beta}}{\#\Wh}\sum_{J(h)}\frac{1}{d_{\chi_j(h)}[\pi_{\chi_j(h)}|_{H}:\mathds{1}]}\;\Big|\left\langle Bu_{j},u_{j}\right\rangle_{\L^2(M)}\Big|^{2}.\label{eq:epsilon}
\end{equation}
By the spectral theorem,  $\rho(P(h))u_{j}=u_{j}$ for $E_{j}\in[c,c+h^\beta]$, since $\rho\equiv1$ on $[c,c+h^\beta]$. Taking into account the self-adjointness of  $\rho(P(h))$ one sees that for $E_{j}\in[c,c+h^\beta]$
\begin{equation}
\left\langle Bu_{j},u_{j}\right\rangle_{\L^2(M)} =\left\langle Au_{j},u_{j}\right\rangle_{\L^2(M)} -\alpha.\label{eq:diff2}
\end{equation}
Consequently, we will be  done with the proof if we can show that 
\begin{equation}
\lim_{h\to0}\Lcal(h)=0. \label{eq:showepsilon}
\end{equation}
In order to do so, one considers the time evolution operator
\begin{equation*}
F^{h}(t):\L^2(M)\to \L^2(M), \quad F^{h}(t):=e^{-itP(h)/h}, \qquad t \in \R,
\end{equation*}
which by Stone's theorem \cite[Section XI.13]{yosida} is a well-defined bounded operator. One then sets
\[
B(t) :=  F^{h}(t)^{-1}BF^{h}(t).
\]
In order to make use of classical ergodicity, one notes that the expectation value
\begin{align*}
 \nonumber\left\langle Bu_{j},u_{j}\right\rangle_{\L^2(M)}  & =  \left\langle Be^{-itE_{j}/h}u_{j},e^{-itE_{j}/h}u_{j}\right\rangle_{\L^2(M)} = \left\langle Be^{-itP(h)/h}u_{j},e^{-itP(h)/h}u_{j}\right\rangle_{\L^2(M)} \\
 & =  \left\langle B(t)u_{j},u_{j}\right\rangle_{\L^2(M)},  \qquad \qquad t\in[0,T],
\end{align*}
is actually time-independent. This implies for each $T>0$
\[
\left\langle Bu_{j},u_{j}\right\rangle_{\L^2(M)} =\left\langle \left\langle B\right\rangle_{T}u_{j},u_{j}\right\rangle_{\L^2(M)} ,\]
where we set $\left\langle B\right\rangle_{T}=\frac{1}{T}\int_{0}^{T}B(t)dt\in\Psi_h^{-\infty}(M)$.
Taking into account $\left\Vert u_{j}\right\Vert _{\L^2(M)}^{2}=1$ and the Cauchy-Schwarz inequality one arrives at
\begin{equation*}
\left|\left\langle Bu_{j},u_{j}\right\rangle_{\L^2(M)} \right|^{2}\leq\left\Vert \left\langle B\right\rangle _{T}u_{j}\right\Vert _{\L^2(M)}^{2}.
\end{equation*}
We therefore conclude from (\ref{eq:epsilon}) for each $T>0$ that
\begin{equation}
\Lcal({h})\leq\frac{(2\pi )^{n-\kappa} h^{n-\kappa-\beta}}{\#\Wh}\sum_{J(h)}\frac{1}{d_{\chi_j(h)}[\pi_{\chi_j(h)}|_{H}:\mathds{1}]}\;\left\langle \left\langle B^{*}\right\rangle _{T}\left\langle B\right\rangle _{T}u_{j},u_{j}\right\rangle_{\L^2(M)}. \label{eq:SMALLEPSILON}
\end{equation}
Next, let $\overline{B}(t)$ be an element in $\Psi_h^{-\infty}(M)$ with principal symbol $\sigma(B)\circ\varphi_{t}$. 
By the weak Egorov theorem \cite[Theorem 15.2]{zworski} one has
\begin{equation*}
\left\Vert B(t)-\overline{B}(t)\right\Vert _{\B\left(\L^2(M)\right)}=\mathrm{O}(h)\quad\textrm{uniformly for }\, t\in[0,T],
\end{equation*}
which implies
\begin{equation}
\left\langle B\right\rangle _{T}=\left\langle \overline{B}\right\rangle _{T}+\mathrm{O}_{\B\left(\L^2(M)\right)}^{T}(h).\label{eq:both}
\end{equation}
From the definition of $\overline{B}$ we get
\[
\sigma\left(\left\langle \overline{B}\right\rangle _{T}\right)=\Bigg[\frac{1}{T}\int_{0}^{T}b\circ\varphi_{t}\, dt\Bigg].
\]
Furthermore, the symbol map is a $*$-algebra homomorphism from $\Psi_h^{-\infty}(M)$ to $S^{-\infty}(M)/hS^{-\infty}(M)$,  with involution given by the adjoint operation and pointwise complex conjugation, respectively. That leads to
\[
\sigma\Big(\big\langle \overline{B}^*\big\rangle _{T}\big\langle \overline{B}\big\rangle _{T}\Big)=\Bigg[\, \Big|\frac{1}{T}\int_{0}^{T}b\circ\varphi_{t}\, dt\Big|^2\, \Bigg].
\]
Now,  note that by Lemma \ref{lem:evolvred}
\begin{equation}
 \Bigg<\frac{1}{T}\int_{0}^{T}b\circ\varphi_{t}\, dt\Bigg>_G^{\widetilde{\;}} = \frac{1}{T}\int_{0}^{T}\left(\left<b\right>_G\circ\varphi_{t}\right)^{\widetilde{\;}}\, dt
 =  \frac{1}{T}\int_{0}^{T}\widetilde{\left<b\right>}_G\circ\widetilde{\varphi}_{t}\, dt
 =  \langle \widetilde{\left<b\right>}_G\rangle_{T}, \label{eq:redBsymb}
\end{equation}
which is where the transition from the flow $\varphi_t$ to the reduced flow $\widetilde{\varphi}_t$ takes place. We can then apply the generalized equivariant Weyl law, Theorem
\ref{thm:weyl2}, which  together with (\ref{eq:redBsymb}) yields 
\begin{align}
\begin{split}
\frac{(2\pi )^{n-\kappa} h^{n-\kappa-\beta}}{\#\Wh}\sum_{J(h)} &\frac{1}{d_{\chi_j(h)}[\pi_{\chi_j(h)}|_{H}:\mathds{1}]}\;\big\langle \big\langle \overline{B}^{*}\big\rangle _{T}\big\langle \overline{B}\big\rangle _{T}u_{j},u_{j}\big\rangle_{\L^2(M)} \\
=\int_{\widetilde{\Sigma}_c}&|\langle \widetilde{\left<b\right>}_G\rangle_{T}|^2\d\widetilde{\Sigma}_c 
+ \mathrm{O}\Big(h^\beta+h^{\frac{1-(2\kappa+3)\vartheta}{2\kappa +4}-\beta} (\log h^{-1})^{\Lambda-1}\Big).\label{eq:limnew}
\end{split}
\end{align}
From (\ref{eq:sigmatildeB}) we see that over $\widetilde{\Sigma}_c=\widetilde{p}^{-1}(\{c\})$ we have $
\widetilde{\left<b\right>}_G|_{\widetilde{\Sigma}_c}=\widetilde{\left<a\right>}_G|_{\widetilde{\Sigma}_c}-\alpha\cdot1_{\widetilde{\Sigma}_c}=:\widetilde{b}_c$. With (\ref{eq:SMALLEPSILON}), (\ref{eq:both}) and (\ref{eq:limnew}) we deduce for each $T>0$
\begin{align*}
\Lcal(h)&\leq \int_{\widetilde{\Sigma}_c}|\langle \widetilde{b}_c\rangle _{T}|^{2}\d\widetilde{\Sigma}_c 
+ \mathrm{O}\Big(h^\beta+h^{\frac{1-(2\kappa+3)\vartheta}{2\kappa +4}-\beta} (\log h^{-1})^{\Lambda-1}\Big) \\
&+ \left[\frac{h^{n-\kappa-\beta}}{\#\Wh}\sum_{J(h)}\frac{1}{d_{\chi_j(h)}[\pi_{\chi_j(h)}|_{H}:\mathds{1}]}\right]\cdot\mathrm{O}(h).
\end{align*}
By Theorem \ref{thm:weyl2}, the factor in front of the $\mathrm{O}(h)$-remainder is convergent and therefore bounded as $h\to 0$. Moreover, the number $\int_{\widetilde{\Sigma}_c}|\langle \widetilde{b}_c\rangle _{T}|^{2}\d\widetilde{\Sigma}_c $ is independent of $h$, as we assume that $a$ is independent of $h$. Thus, 
\begin{equation}
\limsup_{h\to0}\,\Lcal(h)\leq \int_{\widetilde{\Sigma}_c}|\langle \widetilde{b}_c\rangle _{T}|^{2}\d\widetilde{\Sigma}_c \qquad \forall \;T>0.\label{eq:limsup-1}
\end{equation}
This is now the point where symmetry-reduced classical ergodicity is used. Since $\widetilde{b}_c$ fulfills $\fint_{\widetilde{\Sigma}_c}\widetilde{b}_c\, d\widetilde{\Sigma}_c=0$,  Theorem \ref{thm:classicalergod} yields $\lim_{T\to\infty}\int_{\widetilde{\Sigma}_c}|\langle \widetilde{b}_c\rangle _{T}|^{2}d\widetilde{\Sigma}_c=0$.  Because the left hand side of (\ref{eq:limsup-1}) is independent of $T$, it follows that it must be zero, yielding (\ref{eq:showepsilon}).
\end{proof}
\begin{rem}\label{rem:endofsec}
Note that one could have still exhibited the Weyl law remainder estimate in (\ref{eq:limsup-1}). But since the rate of convergence in Theorem \ref{thm:classicalergod} is unknown in general, it is not possible to give a remainder estimate in Theorem \ref{thm:ergod1} with the methods employed here. Nevertheless, in certain dynamical situations, the rate could probably be made explicit.
\end{rem}
In the special case of a constant semiclassical character family, corresponding to the study of a single fixed isotypic component, we obtain as a direct consequence
\begin{thm}[{\bf Integrated equivariant quantum ergodicity for single isotypic components}]
\label{thm:ergod11} Suppose that the reduced flow $\widetilde{\varphi}_{t}$ corresponding to the reduced Hamiltonian function $\widetilde{p}$ is ergodic on $\widetilde{\Sigma}_c:=\widetilde{p}^{-1}(\{c\})$. Let $A\in\Psi_h^{0}(M)$ be a semiclassical pseudodifferential operator with principal symbol $\sigma(A)=[a]$, where $a\in S^0(M)$ is independent of $h$. Choose $\beta\in\big(0,\frac{1}{2\kappa+4}\big)$ and $\chi\in \widehat G$. Then, one has
\begin{equation}
\lim_{h\to0}\;h^{n-\kappa-\beta}\sum_{J^\chi (h)}\Big|\left\langle Au_{j}(h),u_{j}(h)\right\rangle_{\L^2(M)} -\fint_{\Sigma_c\cap \Omega_\text{reg}}a\, \frac{d\mu_c}{\vol_\O}\Big|^{2}\;=\;0,\label{eq:ergod11}
\end{equation}
where
\bq
\label{eq:24.08.2015}
J^\chi (h):=\big\{j\in \N:E_j(h)\in[c,c+h^\beta],\; u_j(h) \in \L^2_\chi(M)\big\}.
\eq
\end{thm}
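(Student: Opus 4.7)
The plan is to deduce Theorem \ref{thm:ergod11} as a direct specialization of Theorem \ref{thm:ergod1} to the trivial (constant in $h$) semiclassical character family $\Wh:=\{\chi\}$ for all $h\in(0,1]$. The first step is to dispatch the degenerate case $\chi\notin\widehat G'$: then $\L^2_\chi(M)=\{0\}$, so $J^\chi(h)=\emptyset$ for every $h$ and \eqref{eq:ergod11} holds trivially. From now on I may therefore assume $\chi\in\widehat G'$, which by Frobenius reciprocity applied to a neighborhood of a principal orbit implies $[\pi_\chi|_H:\mathds{1}]>0$.

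Next, I need to check that $\{\Wh\}_{h\in(0,1]}$ with $\Wh=\{\chi\}$ qualifies as a semiclassical character family in the sense of Definition \ref{def:family} and that its growth rate fits into the hypotheses of Theorem \ref{thm:ergod1}. This is immediate: for any differential operator $D$ of order $N$ on $G$,
\[
\frac{1}{\#\Wh}\sum_{\chi'\in \Wh}\frac{\|D\overline{\chi'}\|_\infty}{[\pi_{\chi'}|_H:\mathds{1}]}=\frac{\|D\overline{\chi}\|_\infty}{[\pi_\chi|_H:\mathds{1}]}
\]
is a finite $h$-independent constant, so the family has growth rate $\vartheta=0$. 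Since $\beta\in\bigl(0,\tfrac{1}{2\kappa+4}\bigr)$, the condition $\vartheta<\tfrac{1-(2\kappa+4)\beta}{2\kappa+3}$ holds automatically.

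Now I apply Theorem \ref{thm:ergod1} with this family and the given $\beta$. Observe that for this choice of family, $\#\Wh=1$, the set $J(h)$ appearing in Theorem \ref{thm:ergod1} coincides with $J^\chi(h)$ as defined in \eqref{eq:24.08.2015}, and for every $j\in J^\chi(h)$ we have $\chi_j(h)=\chi$, so that the weight $d_{\chi_j(h)}[\pi_{\chi_j(h)}|_H:\mathds{1}]=d_\chi[\pi_\chi|_H:\mathds{1}]$ is a \emph{positive constant} independent of $j$ and $h$. Therefore \eqref{eq:ergod1} reads
\[
\lim_{h\to 0}\frac{h^{n-\kappa-\beta}}{d_\chi[\pi_\chi|_H:\mathds{1}]}\sum_{J^\chi(h)}\Big|\langle Au_j(h),u_j(h)\rangle_{\L^2(M)}-\fint_{\Sigma_c\cap\Omega_\text{reg}}a\,\frac{d\mu_c}{\vol_\O}\Big|^2=0,
\]
and multiplying through by the positive constant $d_\chi[\pi_\chi|_H:\mathds{1}]$ yields \eqref{eq:ergod11}.

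There is no real obstacle beyond the bookkeeping of definitions; the theorem is essentially a restatement. The only point worth flagging is that the constant family indeed falls inside the admissible range of growth rates in Theorem \ref{thm:ergod1}, which is why the seemingly sharper-looking condition $\beta<\tfrac{1}{2\kappa+4}$ survives unchanged in the single-isotypic-component version (no $\vartheta$-dependent tightening occurs here).
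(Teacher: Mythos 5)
Your proof is correct and takes essentially the same route as the paper, which simply remarks that Theorem \ref{thm:ergod11} follows from Theorem \ref{thm:ergod1} ``as a direct consequence'' by taking the constant family $\Wh=\{\chi\}$. Your additional care in dispatching $\chi\notin\widehat G'$ and in verifying that a singleton family has growth rate $\vartheta=0$ (so that the hypothesis $\vartheta<\tfrac{1-(2\kappa+4)\beta}{2\kappa+3}$ is automatic) is exactly the bookkeeping the paper leaves implicit.
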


\begin{rem}
\label{rem:23.11}
A  weaker version of Theorem \ref{thm:ergod11} can be proved with a less sharp energy localization in an interval $[r,s]$ with $r<s$ by the same methods employed here. In fact, under the additional assumption that the mean value $\alpha$ introduced in \eqref{eq:B} is the same for all $c \in [r,s]$ and all considered $c$ are regular values of $p$, the reduced flow being ergodic on each of the contemplated hypersurfaces $\widetilde \Sigma_c$, one can show that 
\begin{equation}
\label{eq:20.07.2015}
\lim_{h\to0}\;h^{n-\kappa}\sum_{\begin{array}{c}\scriptstyle 
j\in\mathbb{N}:\,u_{j}(h)\in \L_{\chi}^{2}(M), \\
\scriptstyle E_{j}(h)\in [r,s]\end{array}}\Big|\left\langle Au_{j}(h),u_{j}(h)\right\rangle_{\L^2(M)} -\fintop_{p^{-1}([r,s])\cap\Omega_\text{reg}}a\,\frac{d\Omega_\text{reg}}{\vol_\O}\Big|^{2}=0. 
\end{equation}
The proof of this relies on a corresponding semiclassical Weyl law for the interval $[r,s]$ and a single isotypic component, see Remark 4.4 of Part I.
The point is that for the weaker statement \eqref{eq:20.07.2015} a remainder estimate of order $\text{o}(h^{n-\kappa})$ is sufficient in Weyl's law, since the rate of convergence  in \eqref{eq:20.07.2015}  is the one of the leading term. Thus, in principle, this weaker result could have also been obtained using heat kernel methods as in \cite{donnelly78} or \cite{bruening-heintze79} adapted to the semiclassical setting, at least for the Laplacian. Nevertheless, for the stronger version proved in Theorem \ref{thm:ergod11}, remainder estimates of order $\mathrm{O}(h^{n-\kappa-\beta})$ in Weyl's law and in particular the results of  \cite{ramacher10} are necessary.
\end{rem}

In what follows, we shall use our previous results to prove our main result, a symmetry-reduced quantum ergodicity theorem for Schr\"odinger operators. \\

\begin{thm}[{\bf Equivariant quantum ergodicity for Schr\"odinger operators}]
\label{thm:ergod2} With the notation and assumptions as in Theorem \ref{thm:ergod1}, there is a $h_0\in (0,1]$ such that for each $h\in(0,h_0]$ we have a subset $\Lambda(h)\subset J(h)$ satisfying 
\begin{equation}
\label{eq:ergod20}
\lim_{h\to0}\frac{\#\Lambda(h)}{\#J(h)}=1
\end{equation}
such that for each semiclassical pseudodifferential operator $A\in\Psi_h^{0}(M)$ with principal symbol $\sigma(A)=[a]$, where $a$ is $h$-independent, the following holds. For all $\epsilon>0$ there is a $h_\epsilon \in (0,h_0]$ such that
\begin{equation}
\frac{1}{\sqrt{d_{\chi_j(h)}[\pi_{\chi_j(h)}|_{H}:\mathds{1}]}}\;\Big|\left\langle Au_j(h),u_j(h)\right\rangle_{\L^2(M)}-\fintop_{\Sigma_c\cap \Omega_\text{reg}}a\, \frac{d\mu_c}{\vol_\O}\Big|\;<\;\epsilon \qquad\forall\,j\in \Lambda(h), \; \forall\, h\in(0,h_\epsilon],\label{eq:ergod2}
\end{equation}
where the integral in (\ref{eq:ergod2}) equals $\fint_{\widetilde{\Sigma}_c}\widetilde{\eklm{a}}_G \d \widetilde{\Sigma}_c$.
\end{thm}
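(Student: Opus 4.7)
The plan is to deduce the pointwise, density-$1$ conclusion from the integrated statement of Theorem~\ref{thm:ergod1} via a Chebyshev/Markov estimate followed by a diagonal extraction over a countable dense family of operators --- the symmetry-adapted analogue of the classical Shnirelman--Zelditch passage.

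First I would set $\alpha := \fintop_{\Sigma_c \cap \Omega_\text{reg}} a \, \frac{d\mu_c}{\vol_\O}$ and introduce
$$\eta_j^A(h) \,:=\, \frac{1}{\sqrt{d_{\chi_j(h)}[\pi_{\chi_j(h)}|_H:\mathds{1}]}}\,\Bigl|\left\langle Au_j(h), u_j(h)\right\rangle_{\L^2(M)} - \alpha\Bigr|,$$
so that Theorem~\ref{thm:ergod1} reads $\frac{h^{n-\kappa-\beta}}{\#\Wh}\sum_{j \in J(h)} \eta_j^A(h)^2 \to 0$. Next, I would apply Theorem~\ref{thm:weyl2} with $B=\mathds{1}$ (so $b\equiv 1$) to see that $\frac{(2\pi)^{n-\kappa}h^{n-\kappa-\beta}}{\#\Wh}\sum_{j \in J(h)} \frac{1}{d_{\chi_j(h)}[\pi_{\chi_j(h)}|_H:\mathds{1}]}$ converges to the positive constant $V := \int_{\Sigma_c \cap \Omega_\text{reg}} \frac{d\mu_c}{\vol_\O}$. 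Since the weights are bounded by $1$ (both $d_\chi$ and $[\pi_\chi|_H:\mathds{1}]$ are at least $1$ on $\widehat G'$), this forces $\#J(h) \geq C\,\#\Wh/h^{n-\kappa-\beta}$ for some $C > 0$ and all $h \leq h_0$ small. Markov's inequality then shows that, for each fixed $A$ and each $\epsilon > 0$, the bad set $\{j \in J(h) : \eta_j^A(h) \geq \epsilon\}$ has relative density in $J(h)$ tending to zero as $h \to 0$.

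To arrange a \emph{single} family $\Lambda(h)$ serving every $A$ at once, I would pick a countable collection $\{A_k\}_{k \in \N} \subset \Psi_1^0(M)$ whose $h$-independent principal symbols $a_k$ are dense in the sup norm on a fixed compact neighborhood of $\overline{p^{-1}([c,c+1])}$. A standard Zelditch diagonal construction then yields a decreasing sequence $h_k \searrow 0$ such that, for $h \in (0,h_k]$, the union of the $1/k$-bad sets for $A_1,\dots,A_k$ covers at most $\#J(h)/k$ indices of $J(h)$; defining $\Lambda(h)$ as the complement of this union for $h \in (h_{k+1},h_k]$ yields $\#\Lambda(h)/\#J(h) \to 1$, and every $j \in \Lambda(h)$ with $h \leq h_k$ satisfies $\eta_j^{A_\ell}(h) < 1/k$ for all $\ell \leq k$. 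For an arbitrary $A$ and $\epsilon > 0$, I would pick $k$ with $\|a - a_k\|_\infty < \epsilon/3$ on the relevant compact set; by the Calder\'on--Vaillancourt theorem $\|A - A_k\|_{\B(\L^2(M))} \leq \epsilon/3 + \mathrm{O}(h)$, and the means $\alpha$ and $\alpha_k$ differ by at most $\epsilon/3$. Because the weight $1/\sqrt{d_{\chi_j(h)}[\pi_{\chi_j(h)}|_H:\mathds{1}]} \leq 1$ does not amplify these errors, one concludes $\eta_j^A(h) \leq \eta_j^{A_k}(h) + 2\epsilon/3 < \epsilon$ on $\Lambda(h)$ for $k$ large and $h$ small.

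The main technical obstacle I anticipate is the careful interleaving of the scales $h_k$ with the density estimates in the diagonal step, together with verifying that a countable sup-norm-dense family of $h$-independent symbols suffices to approximate \emph{all} semiclassical matrix elements uniformly in $h$ in the equivariant setting. This is standard but delicate, and relies on the operator-norm estimate surviving multiplication by the weight --- which it does precisely because the weight is $\leq 1$ on $\widehat G'$, so the rescaling by $1/\sqrt{d_{\chi_j(h)}[\pi_{\chi_j(h)}|_H:\mathds{1}]}$ present in the statement does not destroy the approximation.
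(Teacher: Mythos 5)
Your proposal follows essentially the same route as the paper's proof: invoke the integrated estimate from Theorem~\ref{thm:ergod1}, combine it with the Weyl law (Theorem~\ref{thm:weyl2} with $B=\1$) and a Chebyshev-type count to get a density-one good set for each fixed $A$, and then pass to a single $\Lambda(h)$ via the standard Zelditch diagonalization over a countable sup-norm-dense family of $h$-independent symbols, using that the weight $1/\sqrt{d_{\chi_j(h)}[\pi_{\chi_j(h)}|_H:\mathds{1}]}\leq 1$ so the operator-norm error is not amplified. The only cosmetic differences are that the paper first reduces to the case $\alpha(a)=0$ via the auxiliary operator $B=A-\alpha(a)\tau(P(h))$ and uses a moving threshold $r(h)^{1/4}$ rather than a fixed $\epsilon$ with Markov, and cites the semiclassical bound $\|\Op_h(a)-\Op_h(b)\|_{\L^2\to\L^2}\leq\|a-b\|_\infty+C\sqrt h$ explicitly rather than appealing to Calder\'on--Vaillancourt, but these are equivalent mechanisms.
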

\begin{proof} 
Again, this proof is an adaptation of existing proofs like \cite[Theorem 15.5]{zworski} to the equivariant setting, only that we do not need the technical condition that the value of the integral $\fint_{\widetilde{\Sigma}_c}\widetilde{a}\, d\widetilde{\Sigma}_c $ must stay the same when varying $c$ in some interval, which slightly simplifies the proof.

Write $u_j(h)=u_j$ and $E_j(h)=E_j$.  By Theorem \ref{thm:weyl2} we can choose a $h_0\in(0,1]$ such that $J(h)\neq \emptyset$ for all $h\in(0,h_0]$, and suppose that $h \in (0,h_0]$. With the notation as in (\ref{eq:orbitalintegral}), we set for any smooth function $s$ on $T^*M$
$$\alpha(s):=\fint_{\widetilde{\Sigma}_c}\widetilde{\eklm{s}}_G\, d\widetilde{\Sigma}_c.$$
Let $\tau\in \CT(\mathbb{R},[0,1])$ be 
such that $\tau\equiv1$ in a neighbourhood of $c$. Without loss of generality, we assume for the rest of the proof that $h_0$ is small enough so that $\tau\equiv1$ on $[c,c+h_0^\beta]$. Now, for any operator $A$ as in the statement of the theorem set 
\[
{B}:=A-\alpha(a)\, \tau\left(P(h)\right).
\] 
By the semiclassical calculus  we know that the principal symbol of ${B}$ is given by $\sigma({B})=[b]$ with $b:=a-\alpha(a) \, \tau\circ p$. Clearly, $\alpha(b)=0$, since   $\tau\circ\widetilde{p}\equiv 1$ on $\widetilde{\Sigma}_c$. Let us now assume that  the statement of the theorem holds for all operators $A$ with $\alpha(a)=0$. Then, there is a sequence of subsets $\Lambda(h)$ of density 1 such that for all $\epsilon>0$ there is a $h_\epsilon \in (0,h_0]$ such that 
\begin{equation}
\frac{1}{\sqrt{d_{\chi_j(h)}[\pi_{\chi_j(h)}|_{H}:\mathds{1}]}}\;\left|\left\langle {B}u_j,u_j\right\rangle_{\L^2(M)} \right|<\epsilon \qquad \forall \, h\in(0,h_\epsilon], \quad  \forall \, j\in \Lambda(h).
 \label{eq:assume}
\end{equation}
Due to the choice of the function $\tau$ we have $\tau\left(P(h)\right)(u_j)=u_j$ for all $u_j$ with $E_j\in [c,c+h^\beta]$. Consequently,  (\ref{eq:assume}) implies that for all $\epsilon>0$ there is $h_\epsilon \in (0,h_0]$ such that\begin{equation*}
\frac{1}{\sqrt{d_{\chi_j(h)}[\pi_{\chi_j(h)}|_{H}:\mathds{1}]}}\;\left|\left\langle Au_j,u_j\right\rangle_{\L^2(M)} - \alpha (a) \right|<\epsilon \qquad \forall \, h\in(0,h_\epsilon], \quad  \forall \, j\in \Lambda(h), 
\end{equation*}
and we obtain the statement of the theorem for general $A$. We are therefore left with the task of proving \eqref{eq:assume} for arbitrary operators $B$ with $\alpha(b)=0$, and shall proceed in a similar fashion to parts 1\ - 5\ of the proof of \cite[Theorem 15.5]{zworski}, pointing out only the main arguments. By Theorem \ref{thm:ergod1} we have for fixed $B$
\begin{equation*}
\frac{h^{n-\kappa-\beta}}{\# \Wh} \sum_{j \in J(h)}\frac{\big|\big\langle Bu_{j},u_{j}\big\rangle_{\L^2(M)} \big|^{2}}{d_{\chi_j(h)}[\pi_{\chi_j(h)}|_{H}:\mathds{1}]}=:r(h) \to 0
\end{equation*}
as $h \to 0$. One then defines for $h \in (0,h_0]$ the $B$-dependent subsets 
$$\Lambda(h):=J(h)-\Gamma(h), \qquad \Gamma(h):=\Big\{j \in J(h): \frac{|\langle Bu_{j},u_{j}\rangle_{\L^2(M)}|^2}{d_{\chi_j(h)}[\pi_{\chi_j(h)}|_{H}:\mathds{1}]}\; \geq \sqrt {r(h)}\Big\}.
$$
Clearly,
\begin{align*}
\# \Gamma(h)\leq \sum_{j \in J(h)}  \frac{|\langle Bu_{j},u_{j}\rangle_{\L^2(M)}|^2}{d_{\chi_j(h)}[\pi_{\chi_j(h)}|_{H}:\mathds{1}] \, \sqrt {r(h)}}=\frac{\# \Wh \, \sqrt{r(h)}}{h^{n-\kappa-\beta}},
\end{align*}
and taking $B=\1$ in Theorem \ref{thm:weyl2} one computes
\begin{align*}
\frac{\#\Gamma(h)}{\# J(h)}&\leq \frac{\# \Wh \sqrt{r(h)}} {h^{n-\kappa-\beta}\sum_{j \in J(h)}({d_{\chi_j(h)}[\pi_{\chi_j(h)}|_{H}:\mathds{1}]})^{-1}}\\ & =\frac{\sqrt{r(h)}}{(2\pi)^{\kappa-n} \intop_{\widetilde{\Sigma}_c}\widetilde {\eklm{b}}_G\, \d\widetilde{\Sigma}_c +  \mathrm{O}\big(h^{\beta}+h^{\frac{1-(2\kappa+3)\vartheta}{2\kappa +4}-\beta}\left (\log h^{-1}\right)^{\Lambda-1}\big)} \longrightarrow  0.
\end{align*}
On the other hand,
\bqn 
\frac{1}{\sqrt{d_{\chi_j(h)}[\pi_{\chi_j(h)}|_{H}:\mathds{1}]}}\;\left|\left\langle {B}u_j,u_j\right\rangle_{\L^2(M)} \right|<r(h)^{1/4} \qquad \forall \, j \in \Lambda(h), 
\eqn
yielding  \eqref{eq:assume}  for  these particular   $\Lambda(h)$ and $B$.  

Consider now a family $\mklm{A_k}_{k \in \N}$ of semiclassical pseudodifferential operators in $\Psi_h^0(M)$ with principal symbols represented by $h$-independent symbol functions. By our previous considerations, for each $k$  there is a sequence of subsets $\Lambda_k(h)\subset J(h)$ such that \eqref{eq:ergod20} and \eqref{eq:ergod2} hold for each particular $A_k$ and $\Lambda_k(h)$.  
One then shows that for sufficiently small $h$ there is a sequence of subsets $\Lambda_\infty(h)\subset J(h)$ of density 1 such that $\Lambda_k(h)\subset \Lambda_\infty(h)$ for each $k$. Hence, the theorem is true for countable families of operators. To obtain it for all operators in $\Psi_h^{-\infty}(M)$, it suffices to find a sequence of operators $\mklm{A_k}_{k \in \N}$ which is dense in the set of operators in $\Psi_h^{-\infty}(M)$ whose principal symbol is represented by an $h$-independent symbol function, in the sense that for any given $A\in \Psi_h^{-\infty}(M)$ of the mentioned form and any $\epsilon >0$ there exists a $k$ such that 
\[
 \| A-A_k\|_{\L^2(M) \rightarrow \L^2(M)} < \epsilon, \qquad 
 \fint_{\widetilde{\Sigma}_c}\widetilde{\eklm{a-a_k}}_G\, d\widetilde{\Sigma}_c  < \epsilon
\]
for sufficiently small $h$. To find such a sequence $\mklm{A_k}_{k \in \N}\subset \Psi_h^{-\infty}(M)$, note that for two symbol functions $a$ and $b$ and the corresponding semiclassical quantizations $A$ and $B$, one has
 \begin{gather*}
 \| A-B\|_{\L^2(M) \rightarrow \L^2(M)}  \leq \| a-b \|_{\L^\infty(T^*M)} +C \sqrt h, \qquad 
 \fint_{\widetilde{\Sigma}_c}\widetilde{\eklm{a-b}}_G \, d\widetilde{\Sigma}_c  \leq C\| a-b\|_{\L^\infty(T^\ast M)}.
 \end{gather*}
Consequently, we only need to find a sequence of $h$-independent symbol functions that is dense in $S^{-\infty}(M)$ equipped with the $L^\infty$-norm. That such a sequence exists follows directly from the facts that $\CT (T^*M)$ is $L^\infty$-norm dense in the Banach space $\mathrm{C}_0(T^*M)\supset S^{-\infty}(M)$ of continuous functions vanishing at infinity, and that $\CT (T^*M)$ is separable. This proves the theorem for operators $A$ in $ \Psi_h^{-\infty}(M)$ with principal symbol represented by an $h$-independent symbol function. Finally, if $A\in \Psi_h^{0}(M)$ is a general  operator with principal symbol represented by an $h$-independent symbol function, one multiplies $A$ with the smoothing operator $\rho(P(h))$, where $\rho \in \CT(\R)$ equals $1$ near $c$. This completes the proof of the theorem.
\end{proof}
Again, in the special case that $\Wh=\{\chi\}$ for all $h\in (0,1]$ and some fixed $\chi\in \widehat G$, we obtain
\begin{thm}[{\bf Equivariant quantum ergodicity for Schr\"odinger operators and single isotypic components}]
\label{thm:ergod22}
With the notation and assumptions as in Theorem \ref{thm:ergod11},  let $\chi\in\widehat{G}$, $\beta\in\big(0,\frac{1}{2\kappa+4}\big)$ be fixed,  and let $J^\chi(h)$ be as in \eqref{eq:24.08.2015}. Then,  there is a $h_0\in (0,1]$ such that for each $h\in(0,h_0]$ we have a subset $\Lambda^\chi(h)\subset J^\chi(h)$ satisfying 
$\lim_{h\to0}\frac{\#\Lambda^\chi(h)}{\#J^\chi(h)}=1$
such that for each semiclassical pseudodifferential operator $A\in\Psi_h^{0}(M)$ with principal symbol $\sigma(A)=[a]$ the following holds. For all $\epsilon>0$ there is a $h_\epsilon \in (0,h_0]$ such that
\begin{equation*}
\Big|\left\langle Au_j(h),u_j(h)\right\rangle_{\L^2(M)}-\fintop_{\Sigma_c\cap \Omega_\text{reg}}a\, \frac{d\mu_c}{\vol_\O}\Big|<\epsilon \qquad\forall\,j\in \Lambda^\chi(h), \; \forall\, h\in(0,h_\epsilon].
\end{equation*}
\end{thm}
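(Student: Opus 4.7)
The plan is to derive Theorem \ref{thm:ergod22} as an immediate corollary of Theorem \ref{thm:ergod2} by specializing the semiclassical character family to the constant family $\W_h := \{\chi\}$ for all $h \in (0,1]$. First I would verify that this constant family is indeed a semiclassical character family in the sense of Definition \ref{def:family}: for any differential operator $D$ on $G$ of order $N$, the quantity $\|D\overline{\chi}\|_\infty / [\pi_\chi|_H : \mathds{1}]$ is a finite constant independent of $h$, so the defining inequality holds trivially with growth rate $\vartheta = 0$. Since $0 < \frac{1-(2\kappa+4)\beta}{2\kappa+3}$ whenever $\beta \in \big(0, \frac{1}{2\kappa+4}\big)$, the hypotheses of Theorem \ref{thm:ergod2} are satisfied under the assumptions of Theorem \ref{thm:ergod22}.

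Next I would observe that with $\W_h = \{\chi\}$, the set $J(h)$ of Theorem \ref{thm:ergod2} coincides precisely with $J^\chi(h)$ as defined in \eqref{eq:24.08.2015}, and for every $j \in J(h)$ we have $\chi_j(h) = \chi$. Consequently, the factors $d_{\chi_j(h)} = d_\chi$ and $[\pi_{\chi_j(h)}|_H : \mathds{1}]$ are positive constants, independent both of $h$ and of $j \in J(h)$. Set $C_\chi := \sqrt{d_\chi\,[\pi_\chi|_H : \mathds{1}]} > 0$. Applying Theorem \ref{thm:ergod2} now produces an $h_0 \in (0,1]$ and subsets $\Lambda^\chi(h) := \Lambda(h) \subset J^\chi(h)$ satisfying $\lim_{h \to 0} \#\Lambda^\chi(h)/\#J^\chi(h) = 1$.

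To conclude, given an arbitrary $\epsilon > 0$, I would apply the conclusion of Theorem \ref{thm:ergod2} with the tolerance parameter chosen as $\epsilon / C_\chi$. This yields an $h_\epsilon \in (0, h_0]$ such that
\begin{equation*}
\frac{1}{C_\chi}\,\Big|\langle Au_j(h), u_j(h)\rangle_{\L^2(M)} - \fintop_{\Sigma_c \cap \Omega_\text{reg}} a\, \frac{d\mu_c}{\vol_\O}\Big| \;<\; \epsilon / C_\chi
\end{equation*}
for all $j \in \Lambda^\chi(h)$ and all $h \in (0, h_\epsilon]$. Multiplying through by $C_\chi$ gives exactly the desired inequality. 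The final clause identifying the integral with $\fint_{\widetilde{\Sigma}_c} \widetilde{\eklm{a}}_G\, d\widetilde{\Sigma}_c$ is inherited verbatim from Theorem \ref{thm:ergod2}.

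The main ``obstacle'' in this particular proof is essentially absent: once the semiclassical character family framework of Theorem \ref{thm:ergod2} is in place, the single-isotypic statement reduces to bookkeeping with a nonzero constant normalization factor $C_\chi$. All the genuine difficulty—namely the passage through the generalized equivariant Weyl law (Theorem \ref{thm:weyl2}) and the classical ergodicity in the reduced space (Theorem \ref{thm:classicalergod})—has already been absorbed into the proof of Theorem \ref{thm:ergod2}, so no new estimates or constructions are required here.
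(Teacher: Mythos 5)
Your proposal is correct and matches the paper's approach: the paper presents Theorem \ref{thm:ergod22} as a direct specialization of Theorem \ref{thm:ergod2} to the constant family $\W_h=\{\chi\}$ without a separate proof. Your verification that the constant family has growth rate $\vartheta=0<\frac{1-(2\kappa+4)\beta}{2\kappa+3}$, and your absorption of the constant $C_\chi=\sqrt{d_\chi[\pi_\chi|_H:\mathds{1}]}>0$ into the $\epsilon$, are exactly the bookkeeping steps needed.
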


\section{Equivariant quantum limits for the Laplace-Beltrami operator}\label{sec:laplacian}

\subsection{Eigenfunctions of the Laplace-Beltrami operator} We shall now apply the semiclassical results from the previous section to study the distribution of eigenfunctions of the Laplace-Beltrami operator on a compact connected Riemannian $G$-manifold $M$ without boundary in the limit of large eigenvalues, $G$ being a compact connected Lie group acting isometrically and effectively on $M$, with principal orbits of dimension $\kappa<n=\dim M$.  Let $\Delta $ be the unique self-adjoint extension of the Laplace-Beltrami operator $ \breve \Delta$ on $M$, and choose an orthonormal basis $\mklm{u_j}_{j \in \N}$ of $\L^2(M)$ of  eigenfunctions of $- \Delta $ with corresponding eigenvalues $\mklm{E_j}_{j \in \N}$, repeated according to their multiplicity. Consider further the Schr\"odinger operator $P(h)$ given by \eqref{eq:15.08.2015} with  $V\equiv 0$ and principal symbol defined by the symbol function $p=\left\Vert \cdot\right\Vert _{T^{*}M}^{2}$. Clearly,   $P(h)=-h^2\Delta$,  and each $u_j$ is an eigenfunction of $P(h)$  with eigenvalue $E_j(h)=h^2 E_j$. Furthermore, under the identification $T^\ast M\simeq TM$ given by the Riemannian metric, the Hamiltonian flow $\varphi_t$ induced by $p$ corresponds to the geodesic flow of $M$.  Each $c>0$ is a regular value of $p$, and since $V\equiv0$ the dynamics of the reduced geodesic flow $\widetilde \varphi_t$ are equivalent on any two hypersurfaces $\widetilde\Sigma_c$ and  $\widetilde\Sigma_{c'}$. In the following, we shall therefore choose $c=1$ without loss of generality, and call the reduced geodesic flow \emph{ergodic} if it is ergodic on $\widetilde{\Sigma}_1=\widetilde{p}^{-1}(\{1\})$. 
The following construction will allow a simpler formulation of the subsequent theorems.
\begin{definition}\label{def:partition}Let $\{a_j\}_{j\in\N}$ be a non-decreasing unbounded sequence of positive real numbers. For $\beta>0$, the \emph{partition of $\{a_j\}_{j\in\N}$ of order $\beta$} is the non-decreasing sequence $\mathcal{P}=\{\mathcal{P}(j)\}_{j\in\N}\subset \N$ defined as follows.  Consider the subsequence $\{j_k\}_{k\in \N}\subset \N$ of indices given by the  inductive rule $$j_1=1, \qquad j_{k+1}:=\min\big\{j\in \N: a_{j_k}(1+a_{j_k}^{-\beta/2})<a_j\big\}.$$
Then, $\mathcal{P}(j):=j_k$,  where  $j_k$ is uniquely defined by $a_{j_k}\leq a_j<a_{j_{k+1}}$.
\end{definition}
\begin{example}\label{ex:partitioninglaplacian}
If $a_j=E_j=j(j+1)$, the $j$-th eigenvalue of the Laplacian on the standard $2$-sphere $S^2$, then the partition of $\{E_j\}_{j\in \N}$ of order $\frac{1}{6}$  is given by  
\[
\{j_k\}_{k\in \N}=\{1,2,3,5,7,10,14,\ldots\},\qquad \{\P(j)\}_{j\in \N}=\{1,2,3,3,5,5,7,7,7,10,10,10,10,14,\ldots\}.
\]
\end{example}
We are now prepared to state and prove an equivariant version of the classical Shnirelman-Zelditch-Colin-de-Verdi\`{e}re quantum ergodicity theorem \cite{shnirelman,zelditch1987,colindv}. In the special case that $\widetilde M=M/G$ is an orbifold, a similar statement has been proved by Kordyukov \cite{kordyukov12} for the trivial isotypic component. 
\begin{thm}[\bf Equivariant quantum limits for the Laplacian]\label{thm:quantlim}With the notation as above, 
assume that the reduced geodesic flow is ergodic. 
Choose a semiclassical character family $\mklm{\W_h}_{h \in (0,1]}$ of growth rate $\vartheta<\frac{1}{2\kappa +3}$ and a partition $\mathcal{P}$ of $\{E_j\}_{j\in \N}$ of order $\beta\in \big(0,\frac{1-(2\kappa+3)\vartheta}{2\kappa+4}\big)$. Define the set of eigenfunctions
\[
\big\{u^{\W,\mathcal{P}}_i\big\}_{i\in\N}:=\big\{u_j: \chi_j\in \W_{E^{-1/2}_{\P(j)}}\big\},
\]
where $\chi_j$ is defined by $u_j\in \L^2_{\chi_{j}}(M)$. Then, there is a subsequence $\big\{u^{\W,\mathcal{P}}_{i_k}\big\}_{k\in\N}$ of \footnote{ The expression  \emph{of density 1} means that $
\lim_{m\to\infty}{\#\{k:\,i_k\leq m\}}/{m}=1$.
} density 1 in $\big\{u^{\W,\mathcal{P}}_i\big\}_{i\in\N}$ such that for all $s\in \Cinft(S^*M)$ one has
\begin{equation}
\frac{1}{\sqrt{d_{\chi_{i_k}}[\pi_{\chi_{i_k}}|_{H}:\mathds{1}]}}\;\Big|\eklm{\Op(s) u^{\W,\mathcal{P}}_{i_k}, u^{\W,\mathcal{P}}_{i_k}}_{\L^2(M)} -\fintop_{S^*M\cap\Omega_\text{reg}}s \, \frac{d\mu}{\vol_\O}\Big|\longrightarrow 0 \qquad \text{ as } {k\to\infty},\label{eq:ass}
\end{equation}
where we wrote $\mu$ for $\mu_1$.
\end{thm}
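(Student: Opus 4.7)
The plan is to derive Theorem \ref{thm:quantlim} as a direct consequence of Theorem \ref{thm:ergod2}, applied to the Schr\"odinger operator $P(h)=-h^{2}\Delta$ (i.e.\ $V\equiv 0$) at the energy level $c=1$. The role of the semiclassical spectral window $[c,c+h^{\beta}]$ will be played by one block of the partition $\P$. To this end, define $h_k:=E_{j_k}^{-1/2}$, so that $h_k\to 0$ as $k\to\infty$. The rule defining $\P$ entails that for every $j$ with $\P(j)=j_k$ one has $E_{j_k}\leq E_{j}\leq E_{j_k}(1+E_{j_k}^{-\beta/2})$, so $h_k^2 E_j\in[1,1+h_k^\beta]$ and $u_j$ is an eigenfunction of $P(h_k)$ with eigenvalue in the required window. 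With $J(h)$ defined as in Theorem \ref{thm:ergod2} and $c=1$, this gives
\[
J(h_k)=\{j\in\N:\P(j)=j_k,\;\chi_j\in\W_{h_k}\},
\]
and reindexing $\{u_j:j\in\bigcup_k J(h_k)\}$ in order of increasing $j$ produces exactly the family $\{u_i^{\W,\P}\}_{i\in\N}$.

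The condition $\beta<\tfrac{1-(2\kappa+3)\vartheta}{2\kappa+4}$ is algebraically equivalent to $\vartheta<\tfrac{1-(2\kappa+4)\beta}{2\kappa+3}$, so Theorem \ref{thm:ergod2} supplies, for each sufficiently small $h_k$, a subset $\Lambda_k:=\Lambda(h_k)\subset J(h_k)$ with $\#\Lambda_k/\#J(h_k)\to 1$ as $k\to\infty$; crucially the \emph{same} $\Lambda_k$ works for every semiclassical operator of order $0$ whose principal symbol is $h$-independent. Listing $\{u_j:j\in\bigcup_k \Lambda_k\}$ in increasing $j$ gives the candidate subsequence $\{u_{i_k}^{\W,\P}\}_{k\in\N}$. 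Its density in $\{u_i^{\W,\P}\}_{i\in\N}$ follows from a standard Ces\`aro argument: setting $a_K:=\sum_{k\leq K}\#J(h_k)$ and $b_K:=\sum_{k\leq K}\#\Lambda_k$, the relation $\#\Lambda_k/\#J(h_k)\to 1$ implies $b_K/a_K\to 1$, and the Weyl law of Theorem \ref{thm:weyl2} forces $\#J(h_K)=o(a_K)$, so the same limit holds along every partial enumeration $N\in(a_{K-1},a_K]$.

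It remains to pass from the semiclassical quantization $\Op_h$ of Theorem \ref{thm:ergod2} to the classical quantization $\Op=\Op_1$ of Theorem \ref{thm:quantlim}. Given $s\in\Cinft(S^*M)$, I extend $s$ zero-homogeneously to $T^*M\setminus 0$ and cut off smoothly near the zero section, producing an $h$-independent symbol $a\in S^0(M)$ with $a|_{S^*M}=s$ and $\Op(a)=\Op(s)$ modulo smoothing operators. Choosing $\rho\in\CT(\R,[0,1])$ equal to $1$ on a neighbourhood of $1$ containing $[1,1+h_0^\beta]$, the spectral theorem yields $\rho(P(h_k))u_j=u_j$ for every $j\in J(h_k)$, whence
\[
\eklm{\Op(s) u_j,u_j}_{\L^2(M)}=\eklm{\rho(P(h_k))\,\Op(s)\,\rho(P(h_k))\,u_j,u_j}_{\L^2(M)}.
\]
By the functional calculus of Part I and \cite{kuester15}, $\rho(P(h_k))\in\Psi_{h_k}^{0}(M)$ has principal symbol $[\rho\circ p]$, hence $A_{h_k}:=\rho(P(h_k))\,\Op(s)\,\rho(P(h_k))\in\Psi_{h_k}^{0}(M)$ has $h$-independent principal symbol represented by $(\rho\circ p)^2 a$, which coincides with $a$ on $\Sigma_1=S^*M$. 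Theorem \ref{thm:ergod2} applied to $A=A_{h_k}$ with $c=1$, together with $\Sigma_1\cap\Omega_\text{reg}=S^*M\cap\Omega_\text{reg}$ and $\mu_1=\mu$, then yields \eqref{eq:ass} along $\{u_{i_k}^{\W,\P}\}$; since the subsets $\Lambda_k$ do not depend on the particular operator, the same subsequence serves every $s$ simultaneously.

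The main substantive step is the classical/semiclassical comparison in the last paragraph --- verifying that the spectrally localized compression $\rho(P(h))\,\Op(s)\,\rho(P(h))$ is indeed a semiclassical pseudodifferential operator of order $0$ with the advertised $h$-independent principal symbol. This is a standard but not entirely trivial invocation of the $h$-dependent functional calculus developed in Part I. All other steps reduce to elementary bookkeeping once the partition structure is correctly decoded, and no further ideas beyond Theorem \ref{thm:ergod2} are needed.
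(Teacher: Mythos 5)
Your proof is correct and follows essentially the same route as the paper: apply Theorem~\ref{thm:ergod2} to $P(h)=-h^2\Delta$ at $c=1$, decode each partition block as the spectral window $[1,1+h_k^\beta]$ with $h_k=E_{j_k}^{-1/2}$, verify $J(h_k)\cap J(h_{k'})=\emptyset$, and pass to the cumulative density statement via Stolz--Ces\`aro. The one place you deviate is the classical/semiclassical comparison at the end: the paper simply observes that, because $\overline{s}$ is $0$-homogeneous (hence independent of $|\xi|$ for large $\xi$), the ordinary quantization $\Op(\overline{s})$ and the semiclassical $\Op_h(\overline{s})$ differ only by an element of $h^\infty\Psi_h^{-\infty}(M)$, and then applies Theorem~\ref{thm:ergod2} directly to $\Op_h(\overline{s})$. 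You instead compress by $\rho(P(h))$ on both sides to land in $\Psi_h^{-\infty}(M)$; this is workable, but to justify that $\rho(P(h))\Op(s)\rho(P(h))$ is a semiclassical operator with principal symbol $[(\rho\circ p)^2 a]$ you implicitly need exactly the paper's observation $\Op(a)=\Op_h(a)+O(h^\infty)$ (composing a semiclassical operator with a genuinely non-semiclassical one does not land you in the semiclassical algebra otherwise), so the sandwich adds a step rather than avoiding the issue. Your explicit remark that $\#J(h_K)=o(a_K)$ is needed for the density claim along all $N$, not just block boundaries, is a point the paper glosses over and is worth making precise.
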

\begin{rem}
\label{rem:7.2}
The integral in (\ref{eq:ass}) can also be written as $\fint_{S^*\widetilde{M}_\text{reg}}s'\, d(S^*\widetilde{M}_\text{reg})$, 
where $s' \in \Cinft(S^*\widetilde{M}_\text{reg})$ is the function corresponding to $\widetilde{\eklm{s}}_G$ under the diffeomorphism  $\widetilde\Sigma_1\simeq S^*\widetilde{M}_\text{reg}$ up to a null set, and $d(S^*\widetilde{M}_\text{reg})$ is the Liouville measure on the unit co-sphere bundle, see Lemma 2.2, Corollary A.3 and Remark A.11 from Part I. In the orbifold case, this integral is given by an integral over the orbifold co-sphere bundle $S^\ast \widetilde M$. 
\end{rem}

\begin{proof}First, we extend $s$ to a function $\overline{s}\in S^0(M)\subset \Cinft (T^*M)$ with $\overline{s}|_{S^*M}=s$ as follows. Set $\widehat{\overline{s}}(x,\xi):=s(x,\xi/\norm{\xi}_x)$ for $x\in M$, $\xi\in T_x^*M-\{0\}$. Choose a small $\delta>0$ and a smooth cut-off function
 $\varphi:T^*M\to [0,1]$ with 
\begin{align*}
\varphi(x,\xi)&= 1\qquad \forall\, x\in M,\qquad \forall\, \xi\in T_x^*M\text{ with }\norm{\xi}_x\geq 1-\delta,\\
\varphi(x,\xi)&= 0\qquad \forall\, x\in M,\qquad \forall\, \xi\in T_x^*M\text{ with }\norm{\xi}_x\leq\delta.
\end{align*}
Now set $\overline{s}(x,\xi):=\varphi(x,\xi)\cdot\widehat{\overline{s}}(x,\xi)$ for $\xi\in T_x^*M-\{0\}$ and $\overline{s}(x,0):=0$. Then $\Op(\overline{s})$ is a pseudodifferential operator in $\Psi^0(M)$. Because $\overline{s}$ is polyhomogenous of degree $0$ and therefore independent of $|\xi|$ for large $\xi$, the ordinary non-semiclassical quantization $\Op(\overline{s})$ differs only
 by an operator in $h^\infty \Psi_h^{-\infty}(M)$ from the semiclassical pseudodifferential operator $\Op_h(\overline{s})\in\Psi_h^0(M)$ with principal symbol $\sigma(\Op_h(\overline{s}))$ = $[\overline{s}]$. Thus, we can apply Theorem \ref{thm:ergod2} to $P(h)=-h^2\Delta$, and we are allowed to replace $\Op_h(\overline{s})$ by $\Op(\overline{s})$ in the results. Fix some $\beta\in \big(0,\frac{1-(2\kappa+3)\vartheta}{2\kappa+4}\big)$. With $c=1$ and $E_j(h)=h^2 E_j$ one has
\begin{align*}
J(h)&=\{j\in \N:E_j(h)\in[c,c+h^\beta],\; \chi_j(h) \in \Wh\}=\left\{j\in \N:E_j\in\left[\frac{1}{h^2},\frac{1}{h^2}+\frac{1}{h^{2-\beta}}\right],\; \chi_j \in \W_{h}\right\}.
\end{align*} 
 Now, by Theorem \ref{thm:ergod2}, there is a number $h_0\in(0,1]$ together with subsets $\Lambda(h)\subset J(h)$, $h\in(0,h_0]$,  satisfying
\begin{equation}
\lim_{h\to0}\frac{\#\Lambda(h)}{\#J(h)}=1,\label{eq:lim1}
\end{equation}
and for each $s\in \Cinft(S^*M)$ and arbitrary $\epsilon>0$ there is a $h_\epsilon \in (0,h_0]$ such that
\begin{equation}
\frac{1}{\sqrt{d_{\chi_j}[\pi_{\chi_j}|_{H}:\mathds{1}]}}\;\Big|\eklm{\Op(\overline{s}) u_j, u_j}_{\L^2(M)}-\fint_{\Sigma_1\cap \Omega_\text{reg}}a\, \frac{d\mu_1}{\vol_\O} \Big|<\epsilon \quad\forall\,j\in \Lambda(h),  \, \forall\, h\in(0,h_\epsilon].\label{eq:estim3}
\end{equation}
Next, consider
a partition $\mathcal{P}$ of $\{E_j\}_{j\in \N}$ of order $\beta$ with  $j_k$, $\P(j)$ as in Definition \ref{def:partition}.   Since there are only finitely many eigenvalues $E_j$ with $h_0 <\frac{1}{\sqrt{E_j} }$ there is a $k_0\in \N$  such that $h_k:=\frac 1{\sqrt{E_{j_k}}}\leq h_0$ for all $k \geq k_0$. Let us  apply the results above to the sequence $\mklm{h_k}_{k \geq k_0}$. By construction, $k\neq k'$ implies $J(h_k)\cap J(h_{k'})=\emptyset$ since
\begin{align*}
J(h_k)&=\left\{j\in \N:E_j\in\left[E_{j_k},E_{j_k}\big(1+E_{j_k}^{-\beta/2}\big)\right],\;\chi_j \in \W_{E_{j_k}^{-1/2}}\right\}\\ &=\left\{j\in \N:E_j\in\big[E_{j_k},E_{j_{k+1}}\big),\; \chi_j \in \W_{E^{-1/2}_{\P(j)}}\right\}.
\end{align*}
Now, if  $(a_q)_{q\in \N}$ and $(b_q)_{q\in \N}$ are sequences of real numbers such that $0<a_q\leq b_q$ for all $q$, and  $\liminf_{q\to\infty}b_q>0$,  $\lim_{q\to\infty}\frac{a_q}{b_q}=1$,  the Stolz-Cesaro lemma implies that 
\begin{equation*}
\lim_{N\to\infty}\frac{\sum_{q=1}^N a_q}{\sum_{q=1}^N b_q}=1.
\end{equation*}
Applied to our situation and  taking into account that $J(h_k)\cap J(h_{k'})=\emptyset$ when $k\neq k'$ we deduce from (\ref{eq:lim1}) that \bq
\lim_{N\to \infty}\frac{\#\bigcup_{k=k_0}^N\Lambda(h_k) }
{\#\bigcup_{k=k_0}^NJ(h_k)  }=\lim_{N\to \infty}\frac{\sum_{k=k_0}^N \# \Lambda(h_k) }
{\sum_{k=k_0}^N \#J(h_k)  }=1. \label{eq:density327583}
\eq
If we therefore set
\begin{equation*}
J:=\bigcup_{k \geq k_0}J(h_k)=\Big\{j\in \N: \frac 1{\sqrt{E_{j}}}\leq h_0, \, \chi_j\in \W_{E^{-1/2}_{\P(j)}}\Big\},\qquad \Lambda:=\bigcup_{k \geq k_0}\Lambda(h_k),
\end{equation*}
 we obtain from (\ref{eq:density327583})
\begin{equation*}
\lim_{N\to \infty}\frac{ \#\{\lambda\in\Lambda:\lambda\leq N\} }{\#\{j\in J:j\leq N\} }=1.
\end{equation*}
Consequently, $\{i_k\}_{k\in\N}:=\{j\in \N: \frac 1{\sqrt{E_{j}}}> h_0, \, \chi_j\in \W_{E^{-1/2}_{\P(j)}}\}\cup \Lambda$ is a density $1$ subsequence of $\{j\in \N: \chi_j\in \W_{E^{-1/2}_{\P(j)}}\}$. 
Now, by construction $\widetilde{\eklm{\overline{s}}}_G|_{\widetilde{\Sigma}_1}=\widetilde{\eklm{s}}_G$, and by Lemma 2.2, Corollary A.3 and Remark A.11 from Part I we have
\[
\fint_{\widetilde{\Sigma}_1}\widetilde{\eklm{s}}_G\, d\widetilde{\Sigma}_1=\fint_{S^*\widetilde{M}_\text{reg}}s'\, d(S^*\widetilde{M}_\text{reg}).
\]
From  (\ref{eq:estim3}) we therefore conclude that the sequence $\big\{u^{\W,\mathcal{P}}_{i_k}\big\}_{k\in\N}$ fulfills (\ref{eq:ass}), completing the proof of Theorem \ref{thm:quantlim}.
\end{proof}

Projecting from $S^*M\cap\Omega_\text{reg}$ onto $M$ we  obtain

\begin{cor}[\bf Equidistribution of eigenfunctions of the Laplacian]
\label{cor:equi}
In the situation of Theorem \ref{thm:quantlim}, we have for any $f \in C(M)$
\begin{equation*}
\frac{1}{\sqrt{d_{\chi_{i_k}}[\pi_{\chi_{i_k}}|_{H}:\mathds{1}]}}\;\Big| \intop_M f | u^{\W,\mathcal{P}}_{i_k}|^2 dM -\fintop_{M}f \, \frac{dM}{\vol_\O} \Big|\longrightarrow 0 \qquad \text{ as } {k\to\infty}.
\end{equation*} 

\end{cor}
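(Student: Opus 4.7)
The plan is to apply the equivariant quantum ergodicity result to the multiplication operator $M_f$ for $f \in C^\infty(M)$, identify the resulting phase-space mean with a mean on $M$, and then extend to general $f \in C(M)$ by density.

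First, for $f \in C^\infty(M)$, the multiplication operator $M_f : \L^2(M) \to \L^2(M)$ lies in $\Psi_h^0(M)$ with $h$-independent principal symbol $[\pi_M^\ast f]$, where $\pi_M : T^\ast M \to M$ denotes the bundle projection. Retracing the subsequence-extraction argument in the proof of Theorem \ref{thm:quantlim} with $A = M_f$ in place of $\Op(\overline{s})$ --- equivalently, noting that any two operators in $\Psi_h^0(M)$ with the same principal symbol differ by an operator whose matrix elements contribute negligibly in the limit --- yields, along the density-one subsequence $\{i_k\}$ provided by that theorem,
\begin{equation*}
\frac{1}{\sqrt{d_{\chi_{i_k}}[\pi_{\chi_{i_k}}|_H:\mathds{1}]}} \, \Big|\int_M f \, \big|u^{\W,\mathcal{P}}_{i_k}\big|^2 \, dM \,-\, \fint_{S^\ast M \cap \Omega_\text{reg}} \pi_M^\ast f \, \frac{d\mu}{\vol_\O}\Big| \longrightarrow 0.
\end{equation*}

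Next, I identify the phase-space mean on the right with $\fint_M f \, \frac{dM}{\vol_\O}$. By Remark \ref{rem:7.2}, it equals $\fint_{S^\ast \widetilde M_\text{reg}} s' \, d(S^\ast \widetilde M_\text{reg})$, where $s' = \pi_{\widetilde M}^\ast \widetilde{\eklm{f}}_G$ is the function on $S^\ast \widetilde M_\text{reg}$ corresponding to $\widetilde{\eklm{\pi_M^\ast f}}_G$, and $\pi_{\widetilde M} : T^\ast \widetilde M_\text{reg} \to \widetilde M_\text{reg}$ is the bundle projection. Since $s'$ is a pullback from the base, fiber integration over the unit cosphere fibers collapses the mean to $\fint_{\widetilde M_\text{reg}} \widetilde{\eklm{f}}_G \, d\widetilde M_\text{reg}$. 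The canonical Riemannian measure on $\widetilde M_\text{reg}$ satisfies $\pi_\ast dM|_{M_\text{reg}} = \vol_\O \cdot d\widetilde M_\text{reg}$, so that by Fubini on orbits and $G$-invariance of $\frac{dM}{\vol_\O}$ one obtains $\int_{\widetilde M_\text{reg}} \widetilde{\eklm{f}}_G \, d\widetilde M_\text{reg} = \int_{M_\text{reg}} \eklm{f}_G \, \frac{dM}{\vol_\O} = \int_{M_\text{reg}} f \, \frac{dM}{\vol_\O}$. Normalizing this identity in both numerator and denominator of the mean yields the desired equality.

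Finally, for general $f \in C(M)$, approximate by $\tilde f \in C^\infty(M)$ with $\|f - \tilde f\|_\infty < \epsilon$. Since $\big|\int_M (f - \tilde f)|u|^2 \, dM\big| \leq \|f - \tilde f\|_\infty$, $\big|\fint_M (f - \tilde f) \frac{dM}{\vol_\O}\big| \leq \|f - \tilde f\|_\infty$, and $\frac{1}{\sqrt{d_{\chi_{i_k}}[\pi_{\chi_{i_k}}|_H:\mathds{1}]}} \leq 1$, the convergence established for $\tilde f$ transfers uniformly in $k$ to $f$, which completes the argument.

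I expect the main technical hurdle to be the identification in the second step: the measure $\frac{d\mu}{\vol_\O}$ lives on the singular subset $S^\ast M \cap \Omega_\text{reg}$ of $S^\ast M$, and one has to trace carefully through the chain of identifications from Part I --- symplectic reduction, cotangent-bundle reduction, and fiber integration over cosphere fibers --- to reconcile this with the base integral over $M$ weighted by $1/\vol_\O$.
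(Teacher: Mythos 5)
Your proof is correct and mirrors the paper's structure: replace the test operator by the multiplication operator $M_f$ using coincidence of principal symbols, identify the phase-space mean $\fint_{S^*M\cap\Omega_\text{reg}}f\circ\pi_M\,\frac{d\mu}{\vol_\O}$ with $\fint_M f\,\frac{dM}{\vol_\O}$, and conclude for general $f\in C(M)$ by uniform approximation. The one genuine difference is how the measure identity in the middle step is established. The paper argues directly on $T^*M$: the Sasaki metric on $T^*M$ restricts fiber-wise to the Euclidean metric, projects to the Riemannian metric on $M$, and induces $d\mu$, so the co-sphere integral of a pullback collapses to a base integral --- this is stated with a reference to \cite{kuester}. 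You instead route through the quotient: pass to $S^*\widetilde M_\text{reg}$ via the symplectomorphism $\widetilde\Sigma_1\simeq S^*\widetilde M_\text{reg}$ (Remark \ref{rem:7.2}), perform fiber integration there (where the metric is a genuine Riemannian metric without singularities), and then undo the orbit integral via $\pi_\ast dM = \vol_\O\cdot d\widetilde M_\text{reg}$ and $G$-invariance. This is a valid and in some ways cleaner alternative, since it avoids directly invoking the Sasaki metric on the singular set $\Omega_\text{reg}$ and makes the role of the orbit-volume weight $1/\vol_\O$ fully explicit; it matches the computation the paper carries out in the proof of Corollary \ref{lem:equishnirelman1}. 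One small point worth flagging: the phrase ``retracing the subsequence-extraction argument'' could be misread as extracting a new density-one subsequence, which would weaken the statement. Your parenthetical --- that operators with equal principal symbols over $\Sigma_1$ have asymptotically equal matrix elements along the \emph{fixed} subsequence $\{i_k\}$ from Theorem \ref{thm:quantlim} --- is the right justification, and is also what the paper implicitly uses.
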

\begin{proof}  Let $\pi:T^*M\to M$ be the co-tangent  bundle projection and  consider for $f\in \Cinft(M)$ the pseudodifferential operator $\text{Op}(f\circ\pi)$, which corresponds to pointwise multiplication with $f$ up to lower order terms. Since the Sasaki metric on $T^*M$ projects onto the Riemannian metric on $M$ and is fiber-wise just the Euclidean metric, and the Sasaki metric induces $d\mu$, we have
\begin{equation*}
\fintop_{S^*M\cap\Omega_\text{reg}}f\circ \pi \, \frac{d\mu}{\vol_\O}=\fintop_{M}f \, \frac{dM}{\vol_\O},
\end{equation*}
see \cite{kuester}. Consequently,  the assertion follows directly from Theorem \ref{thm:quantlim} by approximating continuous functions on $M$ by smooth functions.
\end{proof}

\subsection{Limits of representations} Corollary \ref{cor:equi} immediately leads to a statement about  measures on the topological Hausdorff space $\widetilde M= M/G$ and to a representation-theoretic formulation of our results. 

\begin{cor}\label{lem:equishnirelman1}
In the situation of Theorem \ref{thm:quantlim}, we have for any $f \in C(\widetilde M)$
\begin{equation*}
\frac{1}{\sqrt{d_{\chi_{i_k}}[\pi_{\chi_{i_k}}|_{H}:\mathds{1}]}}\;\Big| \int_{\widetilde M} f \widetilde{\eklm{ | u^{\W,\mathcal{P}}_{i_k}|^2}}_G d\widetilde M - \fint_{\widetilde M} f \frac{d\widetilde{M}}{\vol} \Big|\longrightarrow 0 \qquad \text{ as } {k\to\infty}.
\end{equation*} 
\end{cor}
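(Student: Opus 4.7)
The plan is to deduce this corollary directly from Corollary \ref{cor:equi} by lifting the test function $f$ from $\widetilde M$ to $M$ and rewriting each integral as one over $\widetilde M$ using the pushforward $d\widetilde M = \pi_\ast dM$ together with the orbital integral formalism. Given any $f \in C(\widetilde M)$, the function $F := f \circ \pi = \pi^\ast f$ is a $G$-invariant continuous function on $M$, so Corollary \ref{cor:equi} applies to $F$ and yields
\[
\frac{1}{\sqrt{d_{\chi_{i_k}}[\pi_{\chi_{i_k}}|_{H}:\mathds{1}]}}\;\Big| \int_{M} F\, | u^{\W,\mathcal{P}}_{i_k}|^2 \,dM -\fintop_{M}F \, \frac{dM}{\vol_\O} \Big|\longrightarrow 0.
\]

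To match the integrand in the statement, I would next translate each of the two integrals on $M$ into an integral on $\widetilde M$. For the first, I exploit the $G$-invariance of both $F$ and $dM$: averaging over $g \in G$ allows the substitution $g \cdot x \leftrightarrow x$, so
\[
\int_M F\, |u|^2\, dM \;=\; \int_G \!\int_M F(x)\, |u|^2(g\cdot x)\, dM(x)\, dg \;=\; \int_M F(x)\, \langle |u|^2\rangle_G(x)\, dM(x).
\]
Since $F = \pi^\ast f$ and $\langle |u|^2\rangle_G = \pi^\ast \widetilde{\langle |u|^2\rangle}_G$ are both pull-backs from $\widetilde M$, the defining property $\int_{\widetilde M} g \, d\widetilde M = \int_M \pi^\ast g \, dM$ of the pushforward measure immediately gives
\[
\int_M F\, |u|^2\, dM \;=\; \int_{\widetilde M} f\, \widetilde{\langle |u|^2\rangle}_G\, d\widetilde M,
\]
which is exactly the first integral in the corollary.

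For the second integral, I would use that the orbit volume function $\vol_\O : M \to (0,\infty)$ is $G$-invariant and satisfies $\vol_\O = \vol \circ \pi$, so that $F/\vol_\O = \pi^\ast(f/\vol)$ is again a pull-back. Applying the pushforward identity once more yields
\[
\int_M F\,\frac{dM}{\vol_\O} \;=\; \int_{\widetilde M} f\,\frac{d\widetilde M}{\vol},
\]
and specializing to $f \equiv 1$ shows that the normalizing total masses of $dM/\vol_\O$ on $M$ and $d\widetilde M/\vol$ on $\widetilde M$ coincide, so the averages $\fint$ also agree. Substituting both identities into the conclusion of Corollary \ref{cor:equi} gives exactly the claimed statement. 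There is no genuine obstacle here; the argument is essentially bookkeeping between the $G$-invariant objects on $M$ and the corresponding objects on $\widetilde M$, and one only needs to verify that the quotient $\widetilde M$ being merely a topological Hausdorff space (rather than a manifold) causes no trouble, which is clear since all manipulations take place at the level of continuous functions and Radon measures.
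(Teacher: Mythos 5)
Your argument is correct and follows the same overall strategy as the paper: lift $f$ to $\overline f = f\circ\pi\in C(M)$, apply Corollary~\ref{cor:equi} to $\overline f$, and then rewrite the two integrals on $M$ as integrals on $\widetilde M$. The one place where your bookkeeping differs from the paper's is the translation of $\int_M \overline f\,|u|^2\,dM$: the paper invokes a co-area identity from Part~I (Equation~(2.17) and Corollary~A3) to disintegrate $dM$ over the fibers of $\pi$ and then reassembles $\vol\cdot d\widetilde M_{\mathrm{reg}}$ into $d\widetilde M=\pi_\ast dM$. You instead use the $G$-invariance of $\overline f$ and $dM$ to replace $|u|^2$ by its orbital average $\langle|u|^2\rangle_G$ under the integral sign, and then apply the defining property of the pushforward measure directly. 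Both routes are valid; yours is slightly more self-contained in that it avoids citing the Part~I co-area lemma and works purely from the definition $d\widetilde M=\pi_\ast dM$ and Haar averaging, while the paper's explicit disintegration makes the role of the orbit-volume function visible step by step. Your treatment of the second integral (noting $\vol_\O=\vol\circ\pi$ and hence $\overline f/\vol_\O=\pi^\ast(f/\vol)$, and that the normalizing masses therefore agree) matches the paper's one-line remark exactly.
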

\begin{proof}Let $f\in C(\widetilde M)$,  $\pi:M\to \widetilde M$ be the canonical projection, and denote by $\overline{f}:=f\circ \pi \in C(M)$ the lift of  $f$  to a $G$-invariant function.  With Equation (2.17) in Part I and Corollary A3 from the Appendix of Part I one deduces for any $u \in \Cinft(M)$
\begin{align*}
\intop_M \overline{f}(x)|u(x)|^2 \d M(x) &=\intop_{M_\text{reg}} \overline{f}(x)|u(x)|^2 \d M(x)= \intop_{\widetilde{M}_\text{reg}}\intop_{G\cdot x} \overline{f}(x')|u(x')|^2 \d \mu_{G\cdot x}(x') \d \widetilde{M}_\text{reg}(G\cdot x) \\
&=\int_{\widetilde{M}_\text{reg}}f(G\cdot x)\int_{G\cdot x} |u(x')|^2 \d \mu_{G\cdot x}(x') \d \widetilde{M}_\text{reg}(G\cdot x) \\
&=\int_{\widetilde{M}_\text{reg}}f(G\cdot x)\vol(G\cdot x)\int_{G} |u(g\cdot x)|^2 \d g \d \widetilde{M}_\text{reg}(G\cdot x) 
\\
&=\int_{\widetilde M}f(G\cdot x) \widetilde{\eklm{|u|^2}}_G(G\cdot x) \d\widetilde{M}(G\cdot x),
\end{align*}
as well as  $\fint_{M} \overline{f} \frac{dM}{\vol_\O}=\fint_{\widetilde{M}_\text{reg}}f \d \widetilde{M}_\text{reg}=\fint_{\widetilde M} f \frac{d\widetilde{M}}{\vol}$. The claim now follows from Corollary \ref{cor:equi}.
\end{proof}
Next, let us state a simple fact  from elementary representation theory.

\begin{lem}\label{lem:orbitint}Let  $V\subset \L^2(M)$ be an irreducible $G$-module of class $\chi\in \widehat G$. Let further  $\{v_1,\ldots,v_{d_\chi}\}$ denote an $\L^2$-orthonormal basis of $V$, and $a\in V\cap \Cinft(M)$ have $\L^2$-norm equal to $1$. Then, for any $x \in M$, 
\begin{equation}
\eklm{|a|^2}_G(x)=d_\chi^{-1}\sum_{k=1}^{d_\chi}|v_k(x)|^2. \label{eq:orbitint}
\end{equation}
In particular, the function 
\begin{align*}
\Theta_V: M &\to\R, \quad x \mapsto d_\chi^{-1}\sum_{k=1}^{d_\chi}|v_k(x)|^2,
\end{align*}
is a $G$-invariant element of $\Cinft(M)$ that is independent of the choice of orthonormal basis, and the left hand side of (\ref{eq:orbitint}) is independent of the choice of $a$.
\end{lem}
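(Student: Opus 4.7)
The plan is to prove the identity by a direct computation using the unitary representation of $G$ on $V$ and Schur orthogonality, and then to read off basis-independence and $G$-invariance from the resulting formula.

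First, I will unpack the left-regular action on $V$. Since $V$ is a $G$-invariant subspace carrying a representation of class $\chi$, write the matrix coefficients of the left-regular representation in the orthonormal basis $\{v_1,\dots,v_{d_\chi}\}$ as $L_g v_k = \sum_{l} U(g)_{lk}\, v_l$, with $U(g) \in \mathrm{U}(d_\chi)$. Expanding $a = \sum_k c_k v_k$ with $\sum_k |c_k|^2 = 1$, for every $x \in M$ one has
\[
(L_g a)(x) = a(g^{-1}\cdot x) = \sum_{l}\Big(\sum_{k} U(g)_{lk} c_k\Big) v_l(x).
\]
Thus $\langle |a|^2\rangle_G(x) = \int_G |(L_g a)(x)|^2\,dg$ expands into a sum of integrals of products $U(g)_{lk}\overline{U(g)_{l'k'}}$ weighted by $c_k \overline{c_{k'}} v_l(x)\overline{v_{l'}(x)}$.

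Next, I will apply the Schur orthogonality relations for the compact group $G$, namely
\[
\int_G U(g)_{lk}\,\overline{U(g)_{l'k'}}\,dg = \frac{1}{d_\chi}\,\delta_{ll'}\delta_{kk'}.
\]
Plugging this in collapses the double sum over $(l,l',k,k')$ to a single sum weighted by $\sum_k |c_k|^2 = 1$, yielding exactly
\[
\langle |a|^2\rangle_G(x) = \frac{1}{d_\chi}\sum_{l=1}^{d_\chi} |v_l(x)|^2,
\]
which is the desired identity \eqref{eq:orbitint}. The routine step to watch is making sure the pointwise evaluation is legitimate: I will invoke that $V$, being a finite-dimensional $G$-invariant subspace of $\L^2(M)$, consists of smooth functions (for instance, because $V$ lies in a single eigenspace of the Laplacian by standard Peter-Weyl arguments, or directly because $a\in V\cap\Cinft(M)$ and the $G$-translates of $a$ span $V$ so smoothness propagates to every basis element). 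I do not anticipate a serious obstacle here; the main subtlety is just tracking indices correctly.

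Finally, the three corollaries stated in the lemma follow essentially for free. The function on the right-hand side of \eqref{eq:orbitint} is smooth because each $v_k$ is; it is $G$-invariant and independent of the chosen orthonormal basis because the left-hand side $\langle |a|^2\rangle_G$ is manifestly both (the first by construction of the orbit integral, the second because the formula relates it to a quantity defined through the basis, forcing the latter to be basis-independent). Likewise, fixing one basis and varying $a$ shows that the left-hand side does not depend on the particular unit vector $a\in V\cap\Cinft(M)$ chosen. This finishes the proof.
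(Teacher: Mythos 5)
Your argument is essentially the paper's own: expand $a$ in the orthonormal basis, transport the $L_g$-action onto coefficients via the unitary matrix representation, and collapse the resulting quadruple sum using the Schur orthogonality relations, then read off the corollaries from the fact that the left-hand side is manifestly $G$-invariant, smooth, and independent of the basis and of $a$. The only thing you glide over is the small but genuine step that $\eklm{|a|^2}_G(x)$ is defined via $\int_G |a(g\cdot x)|^2\,dg$ whereas your computation produces $\int_G |a(g^{-1}\cdot x)|^2\,dg$; these agree because inversion leaves the Haar measure of a compact group invariant, a point the paper notes explicitly and you should too.
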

\begin{proof}
Since the left hand side of (\ref{eq:orbitint}) is clearly $G$-invariant, smooth, and independent of the choice of orthonormal basis, it suffices to prove (\ref{eq:orbitint}). Now, one has 
$a=\sum_{j=1}^{d_\chi}a_jv_j$ with $a_j\in\C$, $\sum_{j=1}^{d_\chi}|a_j|^2=1$, and
\[
(L_ga)(x)=a(g^{-1}\cdot x)=\sum_{j=1}^{d_\chi}a_jv_j(g^{-1}\cdot x)=\sum_{j,k=1}^{d_\chi} a_jc_{jk}(g)v_k(x), \qquad g \in G, \, x \in M, 
\]
where $\{c_{jk}\}_{1\leq j,k\leq d_\chi}$ denote the matrix coefficients of the $G$-representation on $V$. This yields 
\begin{align}
\nonumber\int_G |a(g^{-1}\cdot x)|^2\,dg &= \int_G a(g^{-1}\cdot x)\overline{a}(g^{-1}\cdot x)\,dg
=\int_G \Bigg(\sum_{j,k=1}^{d_\chi}a_jc_{jk}(g)v_k(x)\Bigg)\Bigg(\sum_{l,m=1}^{d_\chi}\overline{a}_l\overline{c}_{lm}(g)\overline{v}_m(x)\Bigg)\,dg, 
\end{align}
and we obtain \eqref{eq:orbitint} by taking into account the Schur orthogonality relations \cite[Corollary 1.10]{knapp} 
\begin{equation*}
\int_G c_{jk}(g)\overline{c}_{lm}(g)\,dg=d_\chi^{-1}\delta_{jl}\delta_{km},
\end{equation*}
and the fact that the substitution $g\mapsto g^{-1}$ leaves the Haar measure invariant.
\end{proof}
We can now restate Corollary \ref{cor:equi} in representation-theoretic terms.

\begin{thm}[\bf Representation-theoretic equidistribution theorem]\label{thm:equishnirelman3}
Assume that the reduced geodesic flow is ergodic. By the spectral theorem, choose an orthogonal decomposition $\L^2(M)=\bigoplus_{i\in \N}V_i$ into irreducible unitary $G$-modules such that each $V_i$ is contained in an   eigenspace of the Laplace-Beltrami operator corresponding to some eigenvalue $E_{j(i)}$. Denote by $\chi_i\in \widehat G$ the class of $V_i$.  Choose a semiclassical character family $\mklm{\W_h}_{h \in (0,1]}$  of growth rate $\vartheta<\frac{1}{2\kappa +3}$ and a partition $\mathcal{P}$ of $\{E_j\}_{j\in \N}$ of order $\beta\in \big(0,\frac{1-(2\kappa+3)\vartheta}{2\kappa+4}\big)$. Define the set of irreducible $G$-modules
\[
\big\{V^{\W,\mathcal{P}}_l\big\}_{l\in\N}:=\big\{V_i: \chi_i\in \W_{E^{-1/2}_{\P(j(i))}}\big\}.
\]
As in Lemma \ref{lem:orbitint}, assign to each $V^{\W,\mathcal{P}}_l$ the $G$-invariant function $\Theta_l:= \Theta_{V^{\W,\mathcal{P}}_l}:M\to [0,\infty)$, and regard it as a function on $M/G=\widetilde{M}$.   Then, there is a subsequence $\big\{V^{\W,\mathcal{P}}_{l_m}\big\}_{m\in\N}$ with
\[
\lim_{N\to \infty}\frac{\sum_{l_m\leq N}d_{\chi_{l_m}}}{\sum_{i\leq N}d_{\chi_i}}=1
\]
such that for any $f \in C(\widetilde M)$
\begin{equation*}
\frac{1}{\sqrt{d_{\chi_{l_m}}[\pi_{\chi_{l_m}}|_{H}:\mathds{1}]}}\;\Big| \int_{\widetilde M} f \, \Theta_{l_m}\,  \d\widetilde M - \fint_{\widetilde M} f \frac{d\widetilde{M}}{\vol} \Big|\longrightarrow 0 \qquad \text{ as } {m\to\infty},
\end{equation*}
where $d\widetilde{M}:=\pi_{\ast}dM$ is the pushforward measure defined by the orbit projection $\pi:M\to M/G=\widetilde{M}$ and $\vol: \widetilde{M} \rightarrow (0,\infty)$ assigns to an orbit its Riemannian volume.
\end{thm}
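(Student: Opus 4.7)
The idea is to reduce Theorem~\ref{thm:equishnirelman3} to Corollary~\ref{lem:equishnirelman1} by means of Lemma~\ref{lem:orbitint}: for any $\L^2$-unit vector $v$ in a module $V_l$ one has $\widetilde{\eklm{|v|^2}}_G = \Theta_{V_l}$ on $\widetilde{M}$, so an eigenfunction-level equidistribution statement for $v$ becomes automatically a module-level statement for $V_l$. First I would, for each $V_i$ in the given decomposition, pick an $\L^2$-orthonormal basis $\{v_{i,1},\ldots,v_{i,d_{\chi_i}}\}$ consisting of joint eigenfunctions of $-\Delta$ (possible since $V_i$ lies in a single eigenspace, and any orthonormal basis of it consists of $-\Delta$-eigenfunctions with eigenvalue $E_{j(i)}$), and enumerate their union as a Hilbert basis $\{u_j\}_{j\in\N}$ of $\L^2(M)$ compatibly with the module index, so that for every $N$ the basis vectors drawn from modules $V_i$ with $i\leq N$ form an initial segment of $\{u_j\}$ (and similarly for the relevant subsequence $\{u^{\W,\P}_i\}$ of Theorem~\ref{thm:quantlim}). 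Since each $v\in V_i$ sits in $\L^2_{\chi_i}(M)$ and the partition $\P$ depends only on the eigenvalue, this ordering is compatible with the definitions in Theorem~\ref{thm:quantlim}. Applying Corollary~\ref{lem:equishnirelman1} to this Hilbert basis produces a density-$1$ subsequence $\{u^{\W,\P}_{i_k}\}_{k\in\N}$ for which the normalized orbital-integral convergence against any $f\in C(\widetilde{M})$ holds.

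Next I would transfer this from eigenfunctions to modules. Call $V_l\in\{V^{\W,\P}_l\}$ \emph{good} if at least one of its basis vectors $v_{l,k}$ lies in the density-$1$ subsequence $\{u^{\W,\P}_{i_k}\}$, and \emph{bad} otherwise. A bad module contributes all $d_{\chi_l}$ of its basis vectors to the bad-eigenfunction complement, so by the compatible ordering
\[
\sum_{\substack{l\leq N\\ V_l\text{ bad}}} d_{\chi_l}\;\leq\; \#\{\text{bad eigenfunctions with index}\leq T_N\}\;=\;\text{o}(T_N),
\]
where $T_N := \sum_{l\leq N,\,V_l\in\{V^{\W,\P}\}} d_{\chi_l}$. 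Thus the good modules form a sub-subsequence $\{V^{\W,\P}_{l_m}\}_{m\in\N}$ of the required weighted density. For each good $V_{l_m}$ I would then pick a basis vector $v_{l_m,k_m}\in\{u^{\W,\P}_{i_k}\}$; its position in that subsequence tends to $\infty$ with $m$, so the convergence inherited from Corollary~\ref{lem:equishnirelman1}, combined with $\widetilde{\eklm{|v_{l_m,k_m}|^2}}_G=\Theta_{V_{l_m}}$, delivers the claimed limit for $\Theta_{l_m}$.

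The main obstacle is the combinatorics of the transfer step: since a single bad basis vector already spoils its entire module, the bound on the weighted count of bad modules only works because the Hilbert basis is enumerated compatibly with the module decomposition, so that bad-basis-vector counts indeed control bad-module weights. A secondary delicacy is interpretational: if the denominator $\sum_{i\leq N} d_{\chi_i}$ in the density statement is read as ranging over \emph{all} modules $V_i$ of the decomposition (not only those in $\{V^{\W,\P}_l\}$), one further needs that $\{V^{\W,\P}_l\}$ itself has weighted density $1$ in $\{V_i\}_{i\in\N}$; this would follow from the equivariant Weyl law Theorem~\ref{thm:weyl2} combined with the growth-rate hypothesis on the semiclassical character family $\{\W_h\}$, but it is the more delicate point under that reading.
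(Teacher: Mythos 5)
Your proof is correct and follows essentially the paper's route: pick an eigenfunction Hilbert basis compatible with the module decomposition, apply Corollary~\ref{lem:equishnirelman1} to get a density-one subsequence of eigenfunctions, upgrade to a module-level statement via Lemma~\ref{lem:orbitint}, and transfer the density count combinatorially using the compatible ordering. You bound the weighted count of bad modules, while the paper counts the good ones directly; these two counts are complementary and the arguments are equivalent.

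One caveat about your closing remark. The claim that the weighted density of $\{V^{\W,\P}_l\}$ inside all of $\{V_i\}_{i\in\N}$ ``would follow from the equivariant Weyl law Theorem~\ref{thm:weyl2}'' is not correct, and indeed that density is not $1$ in general. For instance, with $G=\SO(2)$ acting on $S^2$ as in Section~\ref{sec:6.2}, the $l$-th eigenspace of $-\Delta$ decomposes into $2l+1$ one-dimensional isotypes of classes $\chi_{-l},\dots,\chi_l$, while the semiclassical character family at the relevant value $h\sim E_l^{-1/2}\sim l^{-1}$ retains only those $\chi_k$ with $|k|\lesssim l^{\vartheta}$; since $\vartheta<\frac{1}{2\kappa+3}<1$, the fraction retained tends to $0$, so the weighted density of $\{V^{\W,\P}_l\}$ in $\{V_i\}$ is $0$, not $1$. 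Consequently, the density claim in Theorem~\ref{thm:equishnirelman3} has to be read relative to the selected family $\{V^{\W,\P}_l\}$, and this is in fact what both your argument and the paper's establish: the paper's final chain of inequalities tacitly identifies $\sum_{i\leq N}d_{\chi_i}$ with $\max(J_N)=\sum_{l\leq N}\#J_l$, i.e.\ it re-indexes the denominator as a sum over $\{V^{\W,\P}_l\}$ rather than over all $V_i$.
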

\begin{proof}
Consider the set of eigenfunctions $$\big\{u^{\W,\mathcal{P}}_i\big\}_{i\in\N}=\big\{u_j: \chi_j\in \W_{E^{-1/2}_{\P(j)}}\big\}$$
from Theorem \ref{thm:quantlim}. For each  $l\in \N$ one has $V^{\W,\mathcal{P}}_l=\text{span}\, \big\{u^{\W,\mathcal{P}}_i:i\in J_l\big\}$ for a unique index set $J_l$ with $\#J_l=d_{\chi_l}$. 
Without loss of generality, we can assume $\min(J_1)=1$ and $\min(J_{l+1})=\max(J_l)+1$ for each $l\in \N$. 
By Corollary \ref{lem:equishnirelman1}, there is a subsequence $\big\{u^{\W,\mathcal{P}}_{i_k}\big\}_{k\in\N}$ of density $1$ in $\big\{u^{\W,\mathcal{P}}_{i}\big\}_{i\in\N}$ such that  we have for any $f \in C(\widetilde M)$
\begin{equation*}
\frac{1}{\sqrt{d_{\chi_{i_k}}[\pi_{\chi_{i_k}}|_{H}:\mathds{1}]}}\;\Big| \int_{\widetilde M} f \widetilde{\eklm{ | u^{\W,\mathcal{P}}_{i_k}|^2}}_G d\widetilde M - \fint_{\widetilde M} f \frac{d\widetilde{M}}{\vol} \Big|\longrightarrow 0 \qquad \text{ as } {k\to\infty},
\end{equation*}
and by  Lemma \ref{lem:orbitint}, 
\begin{equation}
\label{eq:1709b}
\widetilde{\eklm{|u^{\W,\P}_{i_k}|^2}}_G=\Theta_l \qquad \text{if $i_k \in J_l$}.
\end{equation} 
Let now   $\{l_m\}_{m\in\N}$ be the sequence of those indices $l$ occurring in \eqref{eq:1709b} when $k$ varies over all of $\N$. Then, due to the way how we indexed our sets $J_l$, we have for each $N\in \N$
\begin{equation*}
\sum_{l_m\leq N}d_{\chi_{l_m}} \geq \sum_{l_m\leq N}\#\{k: i_k\in J_{l_m}\}=\#\Big\{k:i_k\leq \sum_{i\leq N}d_{\chi_{i}}\Big\}
\end{equation*}
Passing to the limit $N\to\infty$ we obtain
\[
1\geq\limsup_{N\to\infty}\frac{\sum_{l_m\leq N}d_{\chi_{l_m}}}{\sum_{i\leq N}d_{\chi_{i}}}\geq\liminf_{N\to\infty}\frac{\sum_{l_m\leq N}d_{\chi_{l_m}}}{\sum_{i\leq N}d_{\chi_{i}}}\geq \lim_{N\to\infty} \frac{\#\Big\{k:i_k\leq \sum_{i\leq N}d_{\chi_{i}}\Big\}}{\sum_{i\leq N}d_{\chi_{i}}}=1,
\]
where the final equality holds because $\big\{u^{\W,\mathcal{P}}_{i_k}\big\}_{k\in\N}$ has density $1$ in $\big\{u^{\W,\mathcal{P}}_{i}\big\}_{i\in\N}$. This concludes the proof of the theorem.
\end{proof}

Note that Theorem \ref{thm:equishnirelman3} is a statement about limits of representations, or multiplicities, in the sense that it assigns to each irreducible $G$-module in the character family a measure on $\widetilde{M}$, and then considers the limit measure. \\

To conclude this section, let us notice that in the special case that $\Wh=\{\chi\}$ for all $h\in (0,1]$ and some fixed $\chi\in \widehat G$, the partitioning of the eigenfunction sequence $\{E_j\}$ is not necessary, and the statements proved in this section become much simpler. Thus, as a direct consequence of Theorem \ref{thm:quantlim} we obtain

\begin{thm}[\bf Equivariant quantum limits for the Laplacian and single isotypic components]\label{thm:quantlim2}Assume that the reduced geodesic flow is ergodic, and choose $\chi\in\widehat{G}$. Let $\{u^\chi_j\}_{j\in\N}$ be an orthonormal basis of $\L^2_\chi(M)$ consisting of eigenfunctions of $- \Delta$. Then, there is a subsequence $\{u^\chi_{j_k}\}_{k\in\N}$ of density $1$ in $\{u^\chi_j\}_{j\in\N}$ such that for all $a\in \Cinft(S^*M)$ one has
\begin{equation}
\eklm{\Op(a) u^\chi_{j_k}, u^\chi_{j_k}}_{\L^2(M)} \;{\longrightarrow}\; \frac 1{\vol_{\frac {\mu}{\vol_\O}} (S^\ast M\cap\Omega_\text{reg})}
\intop_{S^*M\cap\Omega_\text{reg}}a \, \frac{d\mu}{\vol_\O} \qquad \text{ as } {k\to\infty}.\label{eq:res3333233}
\end{equation}
\end{thm}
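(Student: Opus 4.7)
The plan is to deduce Theorem \ref{thm:quantlim2} as a direct corollary of the more general Theorem \ref{thm:quantlim} by specializing to the constant semiclassical character family $\W_h := \{\chi\}$ for all $h \in (0,1]$. Such a family trivially has growth rate $\vartheta = 0$, which satisfies $\vartheta < \frac{1}{2\kappa+3}$. I will then pick any exponent $\beta \in (0,\frac{1}{2\kappa+4})$ and any partition $\P$ of $\{E_j\}_{j\in\N}$ of order $\beta$ in the sense of Definition \ref{def:partition}; the specific choice of $\P$ will play no role in the argument, since the membership condition $\chi_j \in \W_{E^{-1/2}_{\P(j)}}$ reduces to $\chi_j = \chi$ independently of $\P$.

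With these choices, the set of eigenfunctions $\{u^{\W,\P}_i\}_{i\in \N} = \{u_j : \chi_j \in \W_{E^{-1/2}_{\P(j)}}\}$ from Theorem \ref{thm:quantlim} collapses to $\{u_j : \chi_j = \chi\}$, which is an $\L^2$-orthonormal basis of $\L^2_\chi(M)$ consisting of eigenfunctions of $-\Delta$; I may therefore identify it with the sequence $\{u^\chi_j\}_{j\in\N}$ from the statement. Applying Theorem \ref{thm:quantlim} then yields a density-$1$ subsequence $\{u^\chi_{j_k}\}_{k\in\N}$ such that for every $s \in \Cinft(S^*M)$
\[
\frac{1}{\sqrt{d_\chi [\pi_\chi|_H : \mathds{1}]}} \Big| \eklm{\Op(s) u^\chi_{j_k}, u^\chi_{j_k}}_{\L^2(M)} - \fintop_{S^*M \cap \Omega_\text{reg}} s\, \frac{d\mu}{\vol_\O} \Big| \longrightarrow 0 \qquad \text{as } k\to\infty.
\]

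If $\L^2_\chi(M) = \{0\}$ then the theorem is vacuous; otherwise $\chi \in \widehat G'$, which forces $d_\chi[\pi_\chi|_H : \mathds{1}] > 0$. In either case, the prefactor $(d_\chi[\pi_\chi|_H : \mathds{1}])^{-1/2}$ is a positive constant independent of $k$ that may simply be absorbed. Rewriting $\fintop_{S^*M \cap \Omega_\text{reg}} s\, \frac{d\mu}{\vol_\O}$ explicitly as $\big(\vol_{\frac{\mu}{\vol_\O}}(S^*M\cap \Omega_\text{reg})\big)^{-1}\intop_{S^*M \cap \Omega_\text{reg}} s\,\frac{d\mu}{\vol_\O}$ then produces precisely the convergence \eqref{eq:res3333233}. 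There is no substantive obstacle here; the only point worth noting is that the partition machinery introduced in Theorem \ref{thm:quantlim} to accommodate growing families of isotypic components is entirely inert for a constant family, so no bookkeeping between $k$ and the auxiliary sequence $\{h_k\}$ is required and the subsequence $\{u^\chi_{j_k}\}$ from Theorem \ref{thm:quantlim} is directly the one claimed.
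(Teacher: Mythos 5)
Your proof is correct and matches the paper's approach: the paper explicitly introduces Theorem \ref{thm:quantlim2} with the remark that for the constant family $\W_h=\{\chi\}$ the partitioning is unnecessary and the result is ``a direct consequence of Theorem \ref{thm:quantlim}.'' Your observations that $\vartheta=0$ works, that the prefactor $(d_\chi[\pi_\chi|_H:\mathds{1}])^{-1/2}$ is a fixed positive constant which can be absorbed, and that the partition $\P$ is inert are precisely the points the paper leaves to the reader.
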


Next,   recall that a sequence of measures $\mu_j$ on a metric space $\bf{X}$ is said to \emph{converge weakly} to a measure $\mu$, if for all bounded and continuous functions $f$ on ${\bf X}$ one has
\begin{equation*}
\int_{\bf{X}} f \d\mu_j \longrightarrow \int_{\bf{X}} f \d\mu\qquad \text{as }\,j\to\infty.
\end{equation*}
Projecting from $S^*M\cap\Omega_\text{reg}$ onto $M$ we  immediately deduce from Corollary \ref{cor:equi}
\begin{cor}[\bf Equidistribution of eigenfunctions of the Laplacian for single isotypic components]\label{cor:24.08.2015}
In the situation of Theorem \ref{thm:quantlim2} we have the weak convergence of measures 
\begin{equation*}
|u^\chi_{j_k}|^2 \d M\;{\longrightarrow}\; \big(\text{vol}_{\frac{dM}{\vol_\O}}M\big)^{-1}\frac{dM}{\vol_\O} \qquad \text{ as } {k\to\infty}.
\end{equation*}
\end{cor}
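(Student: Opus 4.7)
The plan is to derive Corollary \ref{cor:24.08.2015} as the single-isotypic-component specialisation of Corollary \ref{cor:equi}, and then to reformulate the pointwise convergence against continuous functions as weak convergence of measures. No new analytic input is needed beyond what has already been established for a general semiclassical character family.

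First, I would verify that the constant assignment $\W_h := \{\chi\}$ for all $h \in (0,1]$ defines a semiclassical character family in the sense of Definition \ref{def:family} with growth rate $\vartheta = 0$. Indeed, for any differential operator $D$ on $G$ the quantity $\norm{D\overline{\chi}}_\infty / [\pi_\chi|_H : \mathds{1}]$ is a fixed finite constant (with $[\pi_\chi|_H : \mathds{1}] > 0$ since $u^\chi_j \in \L^2_\chi(M)$ is assumed nontrivial, forcing $\chi \in \widehat G'$), so the defining estimate holds with $\vartheta = 0$. In particular one can choose any $\beta \in \big(0,\tfrac{1}{2\kappa+4}\big)$, and hence any partition $\mathcal{P}$ of $\{E_j\}_{j\in\N}$ of order $\beta$, in Theorem \ref{thm:quantlim}; with this choice the family $\big\{u^{\W,\mathcal{P}}_i\big\}_{i \in \N}$ coincides with the orthonormal basis $\{u^\chi_j\}_{j \in \N}$ of $\L^2_\chi(M)$, and the prefactor $(d_{\chi_{i_k}}[\pi_{\chi_{i_k}}|_H : \mathds{1}])^{-1/2}$ reduces to a strictly positive constant independent of $k$.

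Next, I would apply Corollary \ref{cor:equi} in this specialised setting. It produces a density-one subsequence $\{u^\chi_{j_k}\}_{k\in\N}$ of $\{u^\chi_j\}_{j\in\N}$ such that for every $f \in C(M)$,
\begin{equation*}
\Big| \intop_M f\,|u^\chi_{j_k}|^2\,dM \;-\; \fintop_M f \,\frac{dM}{\vol_\O} \Big| \;\longrightarrow\; 0 \qquad \text{as } k \to \infty,
\end{equation*}
the constant prefactor being harmlessly absorbed. Since $M$ is compact, the space of bounded continuous functions on $M$ agrees with $C(M)$, so this convergence is exactly the statement that the finite Borel measures $|u^\chi_{j_k}|^2 \d M$ converge weakly to the measure $\fintop_M (\cdot)\, \frac{dM}{\vol_\O}$ on $M$. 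Unwinding the definition of the average,
\begin{equation*}
\fintop_M f\, \frac{dM}{\vol_\O} \;=\; \big(\text{vol}_{\frac{dM}{\vol_\O}}M\big)^{-1}\int_M f\, \frac{dM}{\vol_\O},
\end{equation*}
which identifies the limit measure as $\big(\text{vol}_{\frac{dM}{\vol_\O}}M\big)^{-1}\,\frac{dM}{\vol_\O}$, the normalised probability measure on $M$ that weights each point by the inverse orbit volume. This yields the claim.

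There is no real obstacle: all the analytic work — the equivariant Weyl law, the symmetry-reduced mean ergodic theorem, the approximation argument passing from smooth symbols on $S^*M$ to continuous functions on $M$, and the Sasaki-metric identity $\fint_{S^*M\cap \Omega_\text{reg}}(f\circ \pi)\,\tfrac{d\mu}{\vol_\O} = \fint_M f\,\tfrac{dM}{\vol_\O}$ — is already packaged into Corollary \ref{cor:equi}. The only conceptual step is the observation that fixing a single character trivialises the growth condition in Definition \ref{def:family} and removes the need for any partition-based reindexing of the eigenfunction sequence.
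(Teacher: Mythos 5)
Your proposal is correct and follows essentially the same route as the paper, which states only that Corollary \ref{cor:24.08.2015} is "immediately deduced" from Corollary \ref{cor:equi} (equivalently, obtained by projecting Theorem \ref{thm:quantlim2} to $M$). You correctly observe that the constant family $\W_h\equiv\{\chi\}$ has growth rate $\vartheta=0$, that the partition then plays no role, that the positive constant prefactor $\big(d_\chi[\pi_\chi|_H:\mathds{1}]\big)^{-1/2}$ can be absorbed, and that testing against $f\in C(M)$ is exactly weak convergence of the finite measures $|u^\chi_{j_k}|^2\,dM$ to the normalized measure $\big(\text{vol}_{\frac{dM}{\vol_\O}}M\big)^{-1}\frac{dM}{\vol_\O}$.
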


On the other hand, Theorem \ref{thm:equishnirelman3} directly implies 

\begin{thm}[\bf Representation-theoretic equidistribution theorem for single isotypic components]\label{thm:equishnirelman34}
Assume that the reduced geodesic flow is ergodic,  and let $\chi\in\widehat{G}$. By the  spectral theorem,  choose an orthogonal decomposition $\L^2_\chi(M)=\bigoplus_{i\in \N}V^\chi_i$ into irreducible unitary $G$-modules of class $\chi$ such that each $V^\chi_i$ is contained in some eigenspace of the Laplace-Beltrami operator. As in Lemma \ref{lem:orbitint}, assign to each $V^\chi_i$ the $G$-invariant function $\Theta_i:= \Theta_{V^\chi_i}:M\to [0,\infty)$, and regard it as a function on $M/G=\widetilde{M}$. Then,  there is a subsequence $\{V^{\chi}_{i_k}\}_{k\in \N}$ of density $1$ in $\{V^{\chi}_i\}_{i\in \N}$ such that
we have the weak convergence
\begin{equation*}
\Theta^{\chi}_{i_k} \d\widetilde{M} \;\overset{k\to\infty}{\longrightarrow}\; \Big(\textrm{vol}_{\frac{d\widetilde{M}}{\vol}}\widetilde{M}\Big)^{-1}\frac{d\widetilde{M}}{{\vol }}.
\end{equation*}
\end{thm}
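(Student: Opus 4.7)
The plan is to deduce this theorem as a specialization of Theorem \ref{thm:equishnirelman3} to the constant semiclassical character family $\W_h\equiv\{\chi\}$, $h\in(0,1]$. First I would extend the given decomposition $\L^2_\chi(M)=\bigoplus_{i\in\N}V^\chi_i$ to an orthogonal decomposition $\L^2(M)=\bigoplus_{i\in\N}V_i$ of the full space in the form required by Theorem \ref{thm:equishnirelman3}: since $\Delta$ commutes with the $G$-action, each of its finite-dimensional eigenspaces is $G$-invariant, and its intersection with every isotypic component $\L^2_{\chi'}(M)$ decomposes orthogonally into irreducible $G$-submodules of class $\chi'$. Combining the given $V^\chi_i$ with analogous choices made inside every other isotypic component produces such a decomposition, consistent with the given one.

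Next I would verify that $\W_h\equiv\{\chi\}$ is a semiclassical character family of growth rate $\vartheta=0$ in the sense of Definition \ref{def:family}: since $\#\W_h=1$, the defining inequality reduces to the statement that $\|D\overline{\chi}\|_\infty/[\pi_\chi|_H:\mathds{1}]$ is a fixed finite constant for each differential operator $D$ on $G$, which is automatic. Together with any partition $\mathcal{P}$ of $\{E_j\}_{j\in\N}$ of order $\beta\in(0,1/(2\kappa+4))$ this meets the hypotheses of Theorem \ref{thm:equishnirelman3}, for which the auxiliary family reduces to
\[
\{V^{\W,\mathcal{P}}_l\}_{l\in\N}=\{V_i:\chi_i=\chi\}=\{V^\chi_i\}_{i\in\N},
\]
and the associated functions $\Theta_l$ coincide with the $\Theta_i$ of the present theorem.

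Finally I would translate the two conclusions of Theorem \ref{thm:equishnirelman3} to this simplified setting. Since every $\chi_i=\chi$ has dimension $d_\chi$, the dimension-weighted density condition $\sum_{i_k\leq N}d_{\chi_{i_k}}/\sum_{i\leq N}d_{\chi_i}\to 1$ collapses to $\#\{k:i_k\leq N\}/N\to 1$, which is exactly density $1$ in the ordinary sense. Likewise the prefactor $1/\sqrt{d_{\chi_{i_k}}[\pi_{\chi_{i_k}}|_H:\mathds{1}]}$ becomes a strictly positive constant independent of $k$, hence may be discarded, so the limit statement of Theorem \ref{thm:equishnirelman3} yields for every $f\in C(\widetilde M)$
\[
\int_{\widetilde M}f\,\Theta_{i_k}\,d\widetilde M\;\longrightarrow\;\Big(\textrm{vol}_{\frac{d\widetilde{M}}{\vol}}\widetilde{M}\Big)^{-1}\int_{\widetilde M}f\,\frac{d\widetilde{M}}{\vol}\qquad\text{as }k\to\infty.
\]
Because $\widetilde M$ is compact, convergence against all of $C(\widetilde M)$ is precisely weak convergence of measures, yielding the claim. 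I do not anticipate any genuine obstacle: the analytic substance is entirely carried by Theorem \ref{thm:equishnirelman3}, and the only non-bookkeeping checks are the automatic verification that $\W_h\equiv\{\chi\}$ has growth rate zero and the collapse of the dimension-weighted density ratio to an unweighted one.
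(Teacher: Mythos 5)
Your proof is correct and follows essentially the same route as the paper, which simply remarks that Theorem \ref{thm:equishnirelman3} "directly implies" Theorem \ref{thm:equishnirelman34}; your write-up makes explicit the bookkeeping that the paper leaves implicit (extending the $\chi$-decomposition to the full space, verifying that $\W_h\equiv\{\chi\}$ is a semiclassical character family of growth rate $0$, collapsing the dimension-weighted density ratio to ordinary density, and absorbing the constant prefactor). No gaps.
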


\section{Applications}\label{sec:example}

In what follows, we apply our results to some concrete situations where a compact connected Riemannian manifold carries an effective isometric action of a compact connected Lie group such that the principal orbits are of lower dimension than the manifold, and the reduced geodesic flow is ergodic.

\subsection{Compact locally symmetric spaces} 
\label{sec:8.1} 
Let  $G$ be a connected semisimple Lie group  with finite center  and Lie algebra $\g$, and   $\Gamma$ a discrete  co-compact subgroup. Consider a Cartan decomposition
\bqn 
\g =\k\oplus \p
\eqn
of $\g$, and denote the  maximal compact subgroup of $G$ with Lie algebra $\k$ by $K$. Choose a left-invariant metric on $G$ given by an $\Ad(K)$-invariant bilinear form on  $\g$.
The quotient  $M=\Xbb:=\Gamma\backslash G$ is a compact manifold without boundary,  and by requiring that the projection $G \rightarrow \Xbb$ is a Riemannian submersion, we obtain a Riemannian structure on $\Xbb$.  
$K$ acts on $G$ and on $\Xbb$ from the right  in an isometric  and effective way, and  the isotropy group of a point $\Gamma g\in \Xbb$ is conjugate 
to the finite group $gKg^{-1}\cap \Gamma$. Hence, all $K$-orbits in $\Xbb$ are either principal or exceptional. Since the maximal compact subgroups of $G$ are precisely the conjugates of $K$, exceptional $K$-orbits arise from elements in $\Gamma$ of finite order.  
Now, let $\Jbb_G:T^\ast \Xbb \rightarrow \g^\ast$ be the momentum map of the right $G$-action on $\Xbb$  and $\mathrm{res}: \g^\ast \rightarrow \k^\ast$ the natural restriction map. Then  $\Jbb_K=\mathrm{res} \circ \Jbb_G$ is  the momentum map of the right $K$-action on $\Xbb$. As usual, let  $\Omega := \Jbb_K^{-1}(\{0\})$.
\begin{figure}[h!]
\begin{tikzpicture}[node distance=1.4cm, auto]

\node (A00) {$G$}; 

\node (B0) [above left of=A00] {${\Xbb}=\Gamma\backslash G$}; 
\node (C0) [right of=B0] {}; 
\node (D0) [right of=C0] {$G/K$}; 
\node (F0) [right of=D0] {}; 

\node (D) [above left of=B0] {}; 
\node (E) [right of=D] {}; 
\node (F) [right of=E] {${{\mathbb Y}}=\Gamma\backslash G/K$}; 

\node (D2) [above left of=F] {};
\node (F2) [left of=D2] {$\Omega$}; 
 
\node (F1) [above right=1cm of F2] {};
\node (G1) [right of=F1] {};
\node (H1) [right of=G1] {$\widetilde \Omega=\Omega/K \,  \simeq \, T^\ast {{\mathbb Y}}\simeq {\mathbb Y} \times \p^\ast$};

\node (E2) [left=1.2cm of F2] {$ \Xbb \times \g^\ast \simeq T^\ast {\Xbb}$}; 
\node (G2) [right of=F2] {$\quad$};

\draw[->, dashed] (A00) to node {} (B0);
\draw[->, dashed] (A00) to node {} (D0);

\draw[->] (E2) to node {} (B0);
\draw[->, dashed] (F2) to node {} (H1);
\draw[->] (F2) to node {$\iota$} (E2);
\draw[<-] (F) to node {} (H1);

\draw[<-, dashed] (F) to node {} (B0);
\draw[<-, dashed] (F) to node {} (D0);

\end{tikzpicture}
\caption{Co-tangent bundle reduction for a locally symmetric space}\label{fig:locsym}
\end{figure}
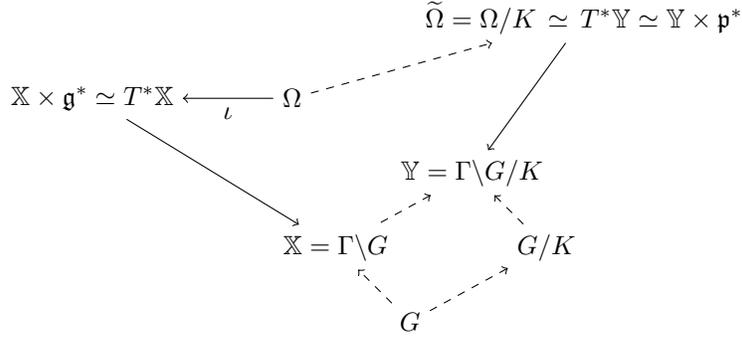

Let us  consider first the case when $\Gamma$ has no torsion, meaning that no non-trivial element $\gamma \in \Gamma$ is conjugate in $G$ to an element of $K$. In this case, there are no exceptional orbits, the action of $\Gamma $ on $G/K$ is free, and ${{\mathbb Y}}:=\Gamma \backslash G/K$ becomes a compact boundary-less   manifold of dimension $n-d$, where $n=\dim \Xbb$ and $d=\dim K$. Furthermore, by  co-tangent  bundle reduction,
\bq
\label{eq:2703c}
T^\ast {{\mathbb Y}}\simeq\Omega /K=: \widetilde \Omega,
\eq
as symplectic manifolds,  compare  \eqref{eq:isomorphic} and Figure \ref{fig:locsym}. In what follows, we give a more intrinsic description of  this symplectomorphism. The left trivialization $T^\ast G \simeq G \times \g^\ast$ described in \eqref{eq:lefttriv}  induces the trivialization 
\bqn
T^\ast{\Xbb}\stackrel{\simeq}\longrightarrow {\Xbb} \times \g^\ast, \qquad  \xi_{\Gamma g} \longmapsto (\Gamma g, (L_{g})_e^\ast \cdot \eta_g), \quad  \mathrm{pr}^\ast (\xi_{\Gamma g}) = \eta_g, \, \eta_g \in T^\ast_g G,
\eqn
$\mathrm{pr}:G \rightarrow \Gamma \backslash G$ being a submersion. The right $G$-action on $T^\ast {\Xbb}$ then takes the form 
\bqn
T^\ast _{\Gamma gh}({\Xbb}) \ni \xi_{\Gamma g} \cdot h=(R_{h^{-1}} )^\ast_{\Gamma g h}\,  \xi_{\Gamma g}\longmapsto(\Gamma g h, (L_{gh})^\ast_e \circ  (R_{h^{-1}} )^\ast_{ g h}\,  \eta_{g}),
\eqn
so that with $\mu= (L_{g})_e^\ast \cdot \eta_g$ we have 
\bq
 \label{eq:2703d}
(\Gamma g, \mu) \cdot h = (\Gamma g h, \Ad^\ast(h) \mu), \qquad h \in G.
\eq
 Now,  for   $X \in \g$ one computes 
\bqn 
\Jbb_G(\Gamma g,\mu)(X) = \Jbb_G(\xi_{\Gamma g}) (X)= (L_{g^{-1}})^\ast_g \, \mu (\widetilde X_g^{\mathrm{R}})=  \mu ( (L_{g^{-1}})_{\ast,g} \widetilde X_g^{\mathrm{R}})=\mu \Big ( \frac d{dt} ({g^{-1}}  g   \e{tX} )_{|t=0}\Big )=\mu( X),
\eqn
where $\widetilde X_g^{\mathrm{R}}$ denotes the vector field generated by the right action of $X$, compare Example \ref{ex:2.2}, so that the   momentum map reads
\bqn 
\Jbb_G(\Gamma g, \mu) =\mu, \qquad (\Gamma g, \mu) \in T^\ast {\Xbb}. 
\eqn   
If   $\eta=(\Gamma g,\mu) \in \Jbb^{-1}_K(\mklm{0}) \subset T^\ast{\Xbb}$,   the last equality  implies $\mu =\Jbb_G(\Gamma g,\mu) \in \p^\ast$. Furthermore, in view of the Cartan decomposition  $G=P K$, where $P$ is the parabolic subgroup with Lie algebra $\p$, one has  the diffeomorphism $G/K\simeq P$. Consequently, we can choose  as representant of the class  $[\eta] \in \widetilde \Omega$ an element $\eta=(\Gamma g,\mu)$ with $g\in P$ and $\mu \in \p^\ast$, yielding the identification 
\bq
\label{eq:2703a}
\widetilde \Omega\simeq (\Gamma \backslash P) \times \p^\ast  \simeq {{\mathbb Y}} \times \p^*.
\eq
On the other hand, the left trivialization $T^\ast P \simeq  P \times \p^\ast$ and the previous arguments  imply  the trivialization 
\bq
\label{eq:2703b}
T^\ast {{\mathbb Y}} \simeq  {{\mathbb Y}} \times \p^\ast.
\eq
Comparing \eqref{eq:2703a} and  \eqref{eq:2703b} then yields the desired intrinsic realization of the symplectomorphism  \eqref{eq:2703c}.

Let us now assume that $G$ has real rank $1$. In this case,  the orbit space ${{\mathbb Y}}$ has strictly negative sectional curvature inherited from $G/K$. Consequently,  its geodesic flow $\psi_t$ is ergodic. Since the measures on the spaces $T^\ast {{\mathbb Y}} \simeq \widetilde \Omega$ are given by the corresponding symplectic forms, this implies that  the reduced geodesic flow $\widetilde \varphi_t$ on $\widetilde \Omega$, which corresponds to $\psi_t$ under the symplectomorphism \eqref{eq:2703c},  is ergodic, and the results  from Section \ref{sec:laplacian} apply.

Next, let us consider a discrete co-compact subgroup  $\Gamma_1$  with torsion. In this case $K$ acts on $\Xbb_1:=\Gamma_1 \backslash G$ with non-conjugated finite isotropy groups, so that ${{\mathbb Y}_1}:=\Gamma_1 \backslash G/K$ is no longer a manifold, but an orbifold. Now, by a theorem of Selberg \cite{selberg60}, any finitely generated linear group contains a torsion free subgroup  of finite index. More generally, Borel  \cite{borel63} showed that  every finitely generated group  of isometries of a simply connected Riemannian symmetric manifold has a normal torsion-free subgroup of finite index. Let therefore  $\Gamma\subset \Gamma_1$ be a normal torsion-free co-compact subgroup of finite index \cite{bruening-heintze79}. In this case,   ${{\mathbb Y}}=\Gamma \backslash G/K$ is a smooth manifold and a finite covering of ${\mathbb Y}_1$, and
 \bqn
{\mathbb X}_1 \simeq F\backslash {\mathbb X}, \qquad {\mathbb Y}_1 \simeq F\backslash {\mathbb Y},
\eqn
where $F$ denotes the finite group $F:=\Gamma_1/\Gamma$. Next, let $\Jbb_G^1:T^\ast \Xbb_1 \rightarrow \g^\ast$ be the momentum map of the right $G$-action on $\Xbb_1$,    $\Jbb_K^1:=\mathrm{res} \circ \Jbb_G^1$, and  $\Omega_1:=(\Jbb_K^1)^{-1}(\mklm{0})$. As in the torsion-free case we have the left trivialization $T^\ast \Xbb_1 \simeq \Xbb_1 \times \g^\ast$ as smooth manifolds, and in analogy to \eqref{eq:2703c} one shows that as symplectic orbifolds 
\bq
\label{eq:0204}
T^\ast {\mathbb Y}_1 \simeq \widetilde \Omega_1,
\eq
 which represents the quotient presentation of the co-tangent bundle of ${\mathbb Y}_1$.  Furthermore,  with \eqref{eq:2703b} we obtain
\bqn
T^\ast {\mathbb Y}_1 \simeq \widetilde \Omega_1 \simeq F\backslash \widetilde \Omega \simeq F\backslash (T^\ast {\mathbb Y})\simeq  {\mathbb Y}_1 \times \p^\ast.
\eqn
Consequently, we have a diagram analogous to Figure \ref{fig:locsym} with $\Gamma$ being replaced by $\Gamma_1$. Besides,  if  ${\Xbb}'_1$ denotes the stratum of orbits of principal type of $\Xbb_1$, notice that singular co-tangent bundle reduction  \eqref{eq:isomorphic} implies
\bqn 
T^\ast {\mathbb Y}_1 \supset T^\ast({\Xbb}_1'/K) \simeq \big ((\Jbb^{1}_K)^{-1}(\mklm{0}) \cap T^\ast {\Xbb}'_1\big )/K\subset (\widetilde \Omega_1)_{\mathrm {reg}},
\eqn
  the measures on these spaces being given by the corresponding symplectic forms, and the complements of the inclusions having measure zero. Consider now the commutative diagram in Figure \ref{fig:torsion}, 
\begin{figure}[h!]
\begin{tikzpicture}[node distance=1.4cm, auto]
 
\node (C0) {$T^\ast \Xbb \supset\Omega$}; 
\node (D0) [right=1.7cm  of C0] {$\widetilde \Omega\simeq T^\ast {\mathbb{Y}}$}; 

\node (C) [below of=C0] {$T^\ast \Xbb_1 \supset \Omega_1$};
\node (D) [below of=D0] {$\widetilde \Omega_1\simeq T^\ast {\mathbb{Y}}_1 $};

\draw[->] (C0) to node {$\pi_K$} (D0);
\draw[->] (C) to node {$\pi_K$} (D);
\draw[->] (C0) to node {$\pi_F$} (C);
\draw[->] (D0) to node {$\pi_F$} (D);

\end{tikzpicture}
\caption{\label{fig:torsion}}
\end{figure}
\noindent
where $\pi_K$ and $\pi_F$ denote the projections of the $K$- and $F$-actions, respectively. To relate the dynamics on the symplectic quotients $\widetilde \Omega$ and $\widetilde \Omega_1$, let $\widetilde p_1 \in \Cinft(\widetilde \Omega_1)$ be a smooth function. By definition, there exists a function $p_1 \in \Cinft(T^\ast \Xbb_1)^K$ such that ${p_{1}}|_{\Omega_1}= \pi^\ast_K\widetilde p_1$. The Hamiltonian flow $\varphi_t^1$ of $p_1$ then induces a Hamiltonian flow $\widetilde \varphi_t^1$ on $\widetilde \Omega_1$, compare Section \ref{sec:2.3}. On the other hand, $\widetilde p_1$ yields a  function  $\widetilde p \in \Cinft(\widetilde \Omega)^F$ with Hamiltonian flow $\widetilde \varphi_t$ induced by the corresponding flow $\varphi_t$ on $T^\ast \Xbb$. Since $\varphi_t$ induces the flow $\varphi_t^1$, it is clear that $\widetilde \varphi_t$ induces a flow on $\widetilde \Omega_1$ given precisely by $\widetilde \varphi_t^1$. Indeed, for $\widetilde f_1\in \Cinft(\widetilde \Omega_1)$ and $\widetilde \eta_1=\pi_K(\eta_1)=\pi_F\circ \pi_K(\eta)=\pi_F(\widetilde \eta)\in \widetilde \Omega_1$ one computes for $\widetilde f_1(\widetilde \varphi_t^1(\widetilde \eta_1))$
\bqn 
\pi_K^\ast \widetilde f_1 (\varphi_t^1(\eta_1))=(\pi^\ast_F \circ \pi^\ast_K \widetilde f_1)_{} (\varphi_t(\eta))=(\pi^\ast_K \circ \pi^\ast_F \widetilde f_1)_{} (\varphi_t(\eta))=\pi^\ast_F \widetilde f_1( \widetilde \varphi_t(\widetilde \eta)).
\eqn
Furthermore,  in view of \eqref{eq:0204}, $\widetilde \varphi_t^1$ yields  a  flow $\psi_t^1$ on ${\mathbb Y}_1$.

Let now $\psi_t$ be the geodesic flow on ${{\mathbb Y}}$, and assume that the rank of $G$ is $1$, so that $\psi_t$ is ergodic. Then the induced flow  $\psi_t^1$ on ${\mathbb Y}_1$ is  ergodic, too, with respect to the orbifold symplectic measure on $T^\ast {\mathbb Y}_1$. More precisely, 
by our previous considerations the ergodicity of the flow $\widetilde \varphi_t$ on $\widetilde \Omega$  implies  that 
\bqn 
(\widetilde \varphi_t^1)|_{(\widetilde \Omega_1)_{\mathrm{reg}}},
\eqn
which is precisely the reduced geodesic flow  on the symplectic stratum $(\widetilde \Omega_1)_{\mathrm{reg}}$ given by \eqref{eq:11.06.2015},
must be ergodic  with respect to the symplectic measure $d((\widetilde \Omega_1)_{\mathrm{reg}})$.  Summing up, our results from Section \ref{sec:laplacian} apply. For simplicity, let us state here only the results  for single isotypic components. Then, Theorem \ref{thm:quantlim2} and Corollary \ref{cor:24.08.2015} yield 

\begin{prop}\label{prop:symmspace}
Let  $G$ be a connected semisimple Lie group of rank $1$ with finite center, $K$ a maximal compact subgroup,  and $\Gamma$ a discrete  co-compact subgroup, possibly with  torsion. Let $\Delta$ be the Laplace--Beltrami operator on $\Xbb=\Gamma \backslash G$, $\chi\in\widehat{K}$, and let $\{u^\chi_j\}_{j\in\N}$ be an orthonormal basis of $\L^2_\chi(\Xbb)$ of eigenfunctions of $- \Delta$. Then there is a subsequence $\{u^\chi_{j_k}\}_{k\in\N}$ of density 1 in $\{u^\chi_j\}_{j\in\N}$ such that for all $s\in \Cinft(S^*\Xbb)$ one has
\begin{equation}
\label{eq:2703f}
\eklm{\Op(s) u^\chi_{j_k}, u^\chi_{j_k}}_{\L^2(\Xbb)} \; \stackrel{k\to\infty}{\longrightarrow}\;\fintop_{S^*\Xbb \, \cap\, \Omega_{\mathrm{reg}}}s \, \frac{d\mu}{\vol_\O},
\end{equation}
as well as 
\begin{equation}
 \label{eq:2703e}
|u^\chi_{j_k}|^2 \d \Xbb \;\overset{k\to\infty}{\longrightarrow} \; \big(\text{vol}_{\frac{d\Xbb}{\vol_\O}}\Xbb\big)^{-1}\frac{d\Xbb}{\vol_\O}, \qquad \widetilde{\eklm{|u^\chi_{j_k}|^2}}_G \d{{\mathbb Y}} \;\overset{k\to\infty}{\longrightarrow}\; \left(\textrm{vol}_{\frac{d{ {\mathbb Y}}}{\vol}}{{\mathbb Y}}\right)^{-1}\frac{d{{\mathbb Y}}}{\vol},
\end{equation} 
where ${\mathbb{Y}}=\Gamma \backslash G/ K$ is in general an orbifold, and $d{\mathbb Y}$ is the pushforward of the measure $d\Xbb$ along the orbit projection $\Xbb\to {\mathbb Y}$, see \cite[Section 2.4]{kuester-ramacher15a}.
\end{prop}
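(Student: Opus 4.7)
The plan is to deduce Proposition \ref{prop:symmspace} as a specialization of Theorem \ref{thm:quantlim2} and Corollary \ref{cor:24.08.2015} (and, for the last weak convergence, of the isotypic-component analogue of Corollary \ref{lem:equishnirelman1}) to the isometric, effective right $K$-action on $\Xbb = \Gamma \backslash G$. The condition $\kappa < n$ is automatic, since $K$ is a proper closed subgroup of $G$, and the remaining general hypotheses are built into the setting. The only substantive point that needs to be checked is the ergodicity of the reduced geodesic flow $\widetilde{\varphi}_t$ on $(\widetilde{\Sigma}_1, d\widetilde{\Sigma}_1)$; once this is established, the three conclusions of the proposition follow by routine specialization.

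I would first settle the torsion-free case. When $\Gamma$ has no torsion, ${\mathbb Y} = \Gamma \backslash G / K$ is a compact boundary-less smooth Riemannian manifold. Because $G$ has real rank $1$, the symmetric space $G/K$ carries strictly negative sectional curvature, which is inherited by ${\mathbb Y}$. By the classical theorem of Hopf and Brin, the geodesic flow $\psi_t$ on ${\mathbb Y}$ is then ergodic on $S^\ast {\mathbb Y}$ with respect to Liouville measure. Under the co-tangent bundle reduction symplectomorphism $\widetilde{\Omega} \simeq T^\ast {\mathbb Y}$ recalled in \eqref{eq:2703c}, the reduced Hamiltonian flow $\widetilde{\varphi}_t$ of $\widetilde{p}$ corresponds to $\psi_t$, and the hypersurface measure $d\widetilde{\Sigma}_1$ corresponds to the Liouville measure on $S^\ast {\mathbb Y}$. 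Ergodicity of $\widetilde{\varphi}_t$ on $(\widetilde{\Sigma}_1, d\widetilde{\Sigma}_1)$ is then immediate.

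In the torsion case I would invoke Selberg's lemma (or Borel's theorem) to pick a normal torsion-free subgroup $\Gamma \subset \Gamma_1$ of finite index, so that $F := \Gamma_1/\Gamma$ is finite, ${\mathbb Y}_1 \simeq F \backslash {\mathbb Y}$, and the commutative square of Figure \ref{fig:torsion} applies. As already pointed out in the preceding text, $\widetilde{\varphi}_t^1$ on $\widetilde{\Omega}_1$ is the $F$-quotient of $\widetilde{\varphi}_t$ on $\widetilde{\Omega}$. Any measurable subset of the principal stratum of $\widetilde{\Sigma}_1 \cap (\widetilde{\Omega}_1)_{\mathrm{reg}}$ that is invariant under $\widetilde{\varphi}_t^1$ pulls back via $\pi_F$ to an $F$-invariant, $\widetilde{\varphi}_t$-invariant measurable subset of $\widetilde{\Sigma}_1 \subset \widetilde{\Omega}$, which by the torsion-free case must be null or co-null; passing back to the $F$-quotient then yields ergodicity of the reduced geodesic flow on $(\widetilde{\Omega}_1)_{\mathrm{reg}}$.

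With ergodicity in hand, \eqref{eq:2703f} is just Theorem \ref{thm:quantlim2} applied with $(M,G) = (\Xbb, K)$, the first weak convergence in \eqref{eq:2703e} is Corollary \ref{cor:24.08.2015} in the same setting, and the second weak convergence in \eqref{eq:2703e} is obtained by pushing the first one forward along the orbit projection $\Xbb \to {\mathbb Y}$, using the orbital-integral identity appearing in the proof of Corollary \ref{lem:equishnirelman1}. The main subtlety I expect is the descent-of-ergodicity step in the torsion case: one must verify that the finite $F$-action on $\widetilde{\Sigma}_1$ is measure-preserving on each symplectic stratum, and that the null sets arising from the finer stratification of $\Xbb_1$ into non-principal orbit types do not obstruct the lifting-and-descending argument.
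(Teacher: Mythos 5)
Your proposal takes essentially the same route as the paper's discussion in Section~\ref{sec:8.1}: you reduce the problem to verifying ergodicity of the reduced geodesic flow, establish it in the torsion-free case via the cotangent-bundle reduction symplectomorphism $\widetilde\Omega\simeq T^\ast{\mathbb Y}$ together with Hopf--Brin ergodicity on the negatively curved rank-one quotient ${\mathbb Y}$, descend to the torsion case through the finite group $F=\Gamma_1/\Gamma$ via Selberg/Borel, and then specialize Theorem~\ref{thm:quantlim2}, Corollary~\ref{cor:24.08.2015}, and the single-isotypic analogue of Corollary~\ref{lem:equishnirelman1}. Your pull-back argument for the descent of ergodicity on $(\widetilde\Omega_1)_{\mathrm{reg}}$ simply makes explicit a step the paper states more tersely; it is the same idea.
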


Notice that the limit integral in \eqref{eq:2703f} represents an integral over the orbifold co-sphere bundle $S^\ast {\mathbb Y}$. Since the orbit volume function is constant in this case, eigenfunctions of the Laplacian $\Delta_{{\mathbb Y}}$ on ${{\mathbb Y}}$ correspond to $K$-invariant eigenfunctions of $\Delta$ on $\Xbb$, compare Section \ref{sec:1.4}. Furthermore, up to the constant given by the orbit volume, the pushforward measure $d\mathbb{Y}$ agrees in the orbifold case with the orbifold volume form. Consequently, in the special case that $\chi$ corresponds to the trivial representation, Proposition \ref{prop:symmspace} yields the following result already implied  by the work of Kordyukov \cite{kordyukov12}.

\begin{cor}[\bf Shnirelman-Zelditch-Colin-de-Verdi\`{e}re equidistribution theorem for ${{\mathbb Y}}$] With the assumptions of Proposition \ref{prop:symmspace}, let $\{v_j\}_{j\in\N}$ be an orthonormal basis of $\L^2({{\mathbb Y}})$ of eigenfunctions of $- \Delta_{{\mathbb Y}}$. Then there is a subsequence $\{v_{j_k}\}_{k\in\N}$ of density 1 in $\{v_j\}_{j\in\N}$ such that we have the weak convergence of measures
\bqn 
{{|v_{j_k}|^2}} \d{{\mathbb Y}} \;\overset{k\to\infty}{\longrightarrow}\; \left(\textrm{vol}_{\, {d{ {\mathbb Y}}}}{{\mathbb Y}}\right)^{-1}{d{{\mathbb Y}}}.
\eqn
\end{cor}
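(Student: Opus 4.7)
The plan is to deduce this corollary directly from Proposition \ref{prop:symmspace} specialized to the trivial character $\chi_0 \in \widehat K$. The key structural point, already noted at the end of Section \ref{sec:8.1}, is that in this setting \emph{the orbit volume function $\vol_\O$ is constant}. Indeed, all principal $K$-orbits in $\Xbb = \Gamma\backslash G$ are diffeomorphic to $K$ modulo a conjugate of $K \cap \Gamma$ (which is the same finite group up to conjugation on the principal stratum), and the Riemannian structure on $\Xbb$ comes from a left-invariant metric on $G$ for which $K$ acts by isometries on the right; thus all principal $K$-orbits are isometric, hence have the same volume $\vol_\O \equiv v_0 > 0$.

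Next, I would invoke the lifting result of Watson, Br\"uning--Heintze, and B\'{e}rard-Bergery cited in Section \ref{sec:1.4}: since $\vol_\O$ is constant, every eigenfunction $v_j$ of $-\Delta_{{\mathbb Y}}$ lifts uniquely to a $K$-invariant eigenfunction of $-\Delta$ on $\Xbb$ with the same eigenvalue. Concretely, if $\pi: \Xbb \to {\mathbb Y}$ denotes the orbit projection, then $u_j := v_j \circ \pi \in \L^2_{\chi_0}(\Xbb)$ is a $\Delta$-eigenfunction, and since $d{\mathbb Y} = \pi_\ast d\Xbb$ we have $\norm{u_j}_{\L^2(\Xbb)} = \norm{v_j}_{\L^2({\mathbb Y})}$. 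This gives an isometric bijection between an orthonormal eigenbasis $\{v_j\}$ of $\L^2({\mathbb Y})$ and an orthonormal eigenbasis $\{u_j^{\chi_0}\}$ of $\L^2_{\chi_0}(\Xbb)$, with $u_j^{\chi_0} = v_j \circ \pi$.

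Applying the second weak convergence in Proposition \ref{prop:symmspace} to $\chi = \chi_0$, we obtain a density $1$ subsequence $\{u_{j_k}^{\chi_0}\}$ such that
\[
\widetilde{\eklm{|u_{j_k}^{\chi_0}|^2}}_K \, d{\mathbb Y} \;\longrightarrow\; \bigl(\textrm{vol}_{d{\mathbb Y}/\vol}{\mathbb Y}\bigr)^{-1}\frac{d{\mathbb Y}}{\vol}.
\]
For a $K$-invariant function $u = v\circ \pi$ the orbital integral collapses to $\eklm{|u|^2}_K = |v|^2 \circ \pi$, so $\widetilde{\eklm{|u_{j_k}^{\chi_0}|^2}}_K = |v_{j_k}|^2$ as a function on ${\mathbb Y}$. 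Using $\vol \equiv v_0$ constant, the right-hand side simplifies to $(\textrm{vol}_{d{\mathbb Y}}{\mathbb Y})^{-1}\,d{\mathbb Y}$, yielding the claimed weak convergence $|v_{j_k}|^2 \, d{\mathbb Y} \to (\textrm{vol}_{d{\mathbb Y}}{\mathbb Y})^{-1} d{\mathbb Y}$ along the subsequence $\{j_k\}$.

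The only real subtlety is verifying that the subsequence chosen in Proposition \ref{prop:symmspace} for the isotypic component $\L^2_{\chi_0}(\Xbb)$ has density $1$ in the set $\{v_j\}$ under the bijection $v_j \leftrightarrow u_j^{\chi_0}$. This is automatic, since the bijection is index-preserving once a joint ordering by eigenvalue is fixed. No step requires genuinely new analysis; the main technical observation is the constancy of $\vol_\O$, which both justifies the lifting and collapses the reference measure on ${\mathbb Y}$ to the expected one.
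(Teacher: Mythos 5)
Your proof takes essentially the same route the paper itself indicates (the paper states this Corollary without a displayed proof, remarking only that the constancy of the orbit volume lets one identify eigenfunctions of $\Delta_{\mathbb Y}$ with $K$-invariant eigenfunctions of $\Delta$ on $\Xbb$, so that Proposition \ref{prop:symmspace} with $\chi = \chi_0$ gives the result). You correctly spell out the three ingredients that the paper leaves implicit: constancy of $\vol_\O$ (hence of $\vol$ on $\widetilde M = {\mathbb Y}$), the lift $v_j \mapsto v_j\circ\pi$ onto $\L^2_{\chi_0}(\Xbb)$ with the collapse $\eklm{|v\circ\pi|^2}_K = |v|^2\circ\pi$, and the simplification of $\bigl(\textrm{vol}_{d{\mathbb Y}/\vol}{\mathbb Y}\bigr)^{-1}\frac{d{\mathbb Y}}{\vol}$ to $(\textrm{vol}_{d{\mathbb Y}}{\mathbb Y})^{-1}d{\mathbb Y}$. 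One small imprecision worth noting: the isotropy group of a point on a \emph{principal} orbit is trivial (not a conjugate of $K\cap\Gamma$, which is the isotropy on exceptional orbits), so all principal orbits are free and isometric to $K$ with the fixed $\Ad(K)$-invariant metric; this is what actually forces $\vol_\O$ to be constant. That does not affect the validity of the argument, and the density-$1$ transfer under the eigenvalue-ordered bijection is fine.
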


Notice that  in view of the left trivialization $T^\ast \Xbb \simeq \Xbb \times \g^\ast$ and  \eqref{eq:2703d},    $K$ acts on $\Omega\subset  {\mathbb{X}} \times \p^\ast$ by right multiplication according to 
\bqn 
 \Omega  \ni (\Gamma g , \mu) \cdot k = (\Gamma g k, \Ad^\ast(k) \mu) \in \Omega, \qquad k \in K,
\eqn
$\p$ being $\Ad(K)$-invariant.  In particular, regarding the decomposition of $T^\ast \Xbb$ into isotropy types with respect to the right $K$-action, whenever $\Gamma $ contains non-trivial elliptic elements, 
  the closure of $S^*\Xbb \, \cap\,   \Omega_{\mathrm{reg}}$ in $\Omega$ will  contain exceptional isotropy types, which means that in the proofs of  Theorems \ref{thm:ergod1} and \ref{thm:quantlim} one cannot assume that one can stay away from the singular points of $ \Omega$, compare also  Examples 4.8 of Part I.

\subsection{Invariant metrics on spheres in dimensions $2$ and $4$}\label{sec:6.2}
In contrast to  genuinely chaotic cases, it can happen that the reduced geodesic flow is ergodic  simply for topological reasons. Namely, when the singular symplectic reduction of the co-sphere-bundle is just $1$-dimensional, a single closed orbit of the reduced flow can have full measure. Although  non-generic, this situation is topologically invariant, so that if it occurs for some particular $G$-space, it occurs for any choice of $G$-invariant Riemannian metric on that space, leading to a whole class of examples which might well be complicated {geometrically}. 

In what follows, we will  show that the spheres in dimensions $2$ and $4$, with appropriate group actions and invariant  Riemannian metrics, are examples of the form just described. The reason why we consider only the dimensions $2$ and $4$ is that, in general, the $n$-sphere is topologically the suspension of the $(n-1)$-sphere, but only for $n\in\{2,4\}$, the $(n-1)$-sphere has the structure of a compact connected Lie group. Thus, let $G$ be a compact connected Lie group. The \emph{suspension} of $G$ is the quotient space $$SG:=\big ( [-1,1]\times G\big )/\big ((-1,g_1)\sim (-1,g_2),\; (1,h_1)\sim(1,h_2)\big).$$
 $SG$ is a compact connected Hausdorff space that carries an effective $G$-action induced by the $G$-action on $G$ by left-multiplication and the trivial action on $[-1,1]$. We will call this induced action the \emph{suspension of the $G$-action}. It has exactly two fixed points $N:=[\{1\}\times G]$ and $S:=[\{-1\}\times G]$ which we may call \emph{north pole} and \emph{south pole}. Now, in general, $SG$ does not possess a differentiable structure. However, if $G$ is an $n$-sphere, then $SG$ is homeomorphic to the $(n+1)$-sphere, and consequently carries a canonical smooth structure making it diffeomorphic to the standard $(n+1)$-sphere. As is well-known, the only connected Lie groups that are spheres are $\SO(2)\cong S^1$ and $\text{SU}(2)\cong S^3$. 

Note that $S^2$, with the $S^1$-action given by the suspension of left-multiplication on $S^1$ and equipped with an $S^1$-invariant Riemannian metric, is just a surface of revolution diffeomorphic to the $2$-sphere. Similarly, for $G=S^3$, we equip the suspension $S^4\cong SS^3$ with the $S^3$-action given by the suspension of left-multiplication on $S^3$ and an $S^3$-invariant Riemannian metric, obtaining a class of $4$-dimensional examples. We now have the following
\begin{prop}\label{prop:suspension}
For $n\in\{2,4\}$, equip the $n$-sphere $S^n\cong SS^{n-1}$ with the $S^{n-1}$-action given by the suspension of left-multiplication on $S^{n-1}$. Then the reduced geodesic flow with respect to any $S^{n-1}$-invariant Riemannian metric on $S^n$ is ergodic.
\end{prop}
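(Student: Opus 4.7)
The strategy is to exploit the fact that, in these low-dimensional examples, the singular symplectic reduction collapses to dimension one, turning ergodicity on the reduced energy shell into a topological statement about a single closed ``extended'' orbit.

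First I would analyze the geometry of the suspension action. For $G=S^{n-1}$ with $n\in\{2,4\}$, the suspended $G$-action on $S^n\cong SG$ has exactly two fixed points $N,S$ (the two ``poles''), and every other orbit is principal of codimension one and diffeomorphic to $S^{n-1}$, because left multiplication on $S^{n-1}$ is free. Thus $\widetilde{M}_\text{reg}=M_\text{reg}/G$ is a $1$-dimensional open manifold diffeomorphic to $(-1,1)$; consequently $\widetilde{\Omega}_\text{reg}\cong T^{\ast}\widetilde{M}_\text{reg}\cong(-1,1)\times\mathbb{R}$ has dimension two, and in canonical coordinates $(t,p_t)$ the reduced geodesic Hamiltonian takes the form $\widetilde{p}(t,p_t)=A(t)^{-1}p_t^2$ for a positive smooth function $A$ determined by the chosen $G$-invariant Riemannian metric. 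For every $c>0$ the level set $\widetilde{\Sigma}_c=\widetilde{p}^{-1}(\{c\})=\{p_t=\pm\sqrt{cA(t)}\}$ is a $1$-dimensional submanifold consisting of two disjoint smooth arcs $\widetilde{\Sigma}_c^+$ and $\widetilde{\Sigma}_c^-$, on each of which the reduced Hamiltonian vector field is nowhere vanishing.

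Next I would identify the single extended closed orbit that fills $\widetilde{\Sigma}_c$. On each arc the reduced flow transports $t$ monotonically across $(-1,1)$ in the finite time $T_1=\int_{-1}^{1}\sqrt{A(t)/c}\,dt$, so every orbit of $\widetilde{\varphi}_t$ in $\widetilde{\Omega}_\text{reg}$ fills exactly one arc $\widetilde{\Sigma}_c^\pm$. Upstairs in $\Sigma_c\cap\Omega\subset T^{\ast}S^n$, however, the corresponding zero-$G$-momentum geodesic is a complete trajectory of the geodesic flow $\varphi_t$ (which is complete because $S^n$ is compact); it passes from $N$ to $S$ and back through the singular stratum $\Omega\setminus\Omega_\text{reg}$, and at every pole-crossing the orbit-space dynamics switches the sign of $p_t$. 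Hence the upstairs trajectory, viewed modulo the $G$-action, sweeps alternately across the two arcs $\widetilde{\Sigma}_c^+$ and $\widetilde{\Sigma}_c^-$, covering all of $\widetilde{\Sigma}_c$ outside the discrete (hence $d\widetilde{\Sigma}_c$-null) set of pole-crossing instants.

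Finally I would deduce ergodicity by transferring the invariance upstairs. Let $E\subset\widetilde{\Sigma}_c$ be measurable and invariant under $\widetilde{\varphi}_t$, and put $\widehat{E}=\pi^{-1}(E)\subset\Sigma_c\cap\Omega_\text{reg}$ with $\pi$ the orbit projection; then $\widehat{E}$ is $G$-invariant and $\varphi_t$-invariant whenever $\varphi_t(\widehat{E})\subset\Omega_\text{reg}$. Taking the closure $\overline{\widehat{E}}$ in $\Sigma_c\cap\Omega$, and using continuity of $\varphi_t$ together with the fact that $\Omega\setminus\Omega_\text{reg}$ is null, one obtains a set that is $\varphi_t$-invariant for every $t\in\mathbb{R}$ and has the same measure as $\widehat{E}$. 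If $E$ has positive $d\widetilde{\Sigma}_c$-measure, then $\overline{\widehat{E}}$ must contain at least one full complete upstairs orbit, whose $\pi$-image by the previous step equals $\widetilde{\Sigma}_c$ up to a null set, forcing $E$ to have full measure. The principal obstacle will be making this passage through the singular stratum rigorous: because $\widetilde{\varphi}_t$ is only a partially defined local flow on $\widetilde{\Omega}_\text{reg}$, ergodicity cannot be read off from the reduced dynamics alone, and the argument must work upstairs with the complete flow on $\Sigma_c\cap\Omega$ to enforce the identification of the two arcs $\widetilde{\Sigma}_c^\pm$ across the pole-crossings.
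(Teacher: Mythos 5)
There is a genuine gap, and it sits at the very start of your geometric setup. You identify $\widetilde{\Omega}_{\text{reg}}$ with $T^{\ast}\widetilde M_{\text{reg}}\cong(-1,1)\times\R$, and consequently see $\widetilde{\Sigma}_c$ as two disjoint open arcs on which the reduced flow runs off to the ``ends'' in finite time. This is not the correct description. By definition, $\Omega_{\text{reg}}=\Omega\cap(T^{\ast}M)(H)$ is the set of \emph{covectors} of principal isotropy type, not the covectors based over principal-type \emph{points}. Since $N$ and $S$ are fixed points, the full fibers $T^{\ast}_N M$ and $T^{\ast}_S M$ lie in $\Omega$, and every nonzero covector there has trivial isotropy (for $G=S^{n-1}$ acting linearly, as a rotation/quaternionic rotation, on $T_N^\ast M\cong\R^n$), hence lies in $\Omega_{\text{reg}}$. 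Quotienting, these pole fibers contribute two open half-lines that glue your two arcs together, so that $\widetilde{\Omega}_{\text{reg}}\simeq\R^2\setminus\{(0,\pm1)\}$ and $\widetilde{\Sigma}_c$ is a connected, compact closed curve (an ellipse), on which the reduced flow is a globally defined, complete, periodic flow. Ergodicity is then immediate: the only invariant subsets of a circle under a minimal periodic flow are the whole circle and the empty set. Your ``principal obstacle'' --- that $\widetilde{\varphi}_t$ is only a partial local flow on $\widetilde{\Omega}_{\text{reg}}$ --- does not exist; the passage through the poles takes place entirely inside $\Omega_{\text{reg}}$.

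Because of this misidentification, you are forced into the roundabout upstairs argument, and that argument itself has unjustified steps. Passing to the closure $\overline{\widehat E}$ does not preserve measure (closures of positive-measure sets can have strictly larger measure), and a positive-measure invariant set need not contain a single complete orbit pointwise. Both of these would need additional work to repair, but the cleaner fix is to correct the description of $\widetilde{\Omega}_{\text{reg}}$ as above and read off ergodicity directly from the periodicity of the reduced flow on the connected level set $\widetilde{\Sigma}_c$.
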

\begin{proof}First, we prove the result for $S^2$. It will then become clear that the situation is entirely analogous for $S^4$. Thus, let $G=S^1\cong \SO(2)$. Then, for any choice of $\SO(2)$-invariant metric on $M:=SS^1$, we can identify $M$ with  a surface of revolution in $\R^3$ diffeomorphic to the $2$-sphere and endowed with the induced metric from $\R^3$. We assume that the  \emph{poles}  are given by the points $N=(0,0,1)$ and $S=(0,0,-1)$. The corresponding {meridians} are orthogonal to the $\SO(2)$-orbits, and since the metric is $\SO(2)$-invariant, each meridian is a closed geodesic. Now, for $(x,\xi)\in T^*M$, set $p(x,\xi):=\norm{\xi}^2_{x}$. Let $c>0$ and put $\Sigma_c:=p^{-1}(\{c\})$ and $\widetilde{\Sigma}_c:=\widetilde{p}^{-1}(\{c\})$, where $\widetilde{p}\in \Cinft(\widetilde{\Omega}_{\textrm{reg}})$ is the function induced by $p|_{\Omega_{\textrm{reg}}}$. Clearly, $c$ is a regular value of $p$.  To examine whether the reduced geodesic flow is ergodic on $\widetilde \Sigma_c$, note that with the identification $T^\ast M\simeq TM$ given by the Riemannian metric one has 
\bq
\Omega=\Jbb^{-1}(\{0\}) \simeq \bigsqcup _{x \in M} T_x(G \cdot x)^\perp,\label{eq:omega5656}
\eq
so that
\begin{align*}
\Omega_{\textrm{reg}} & \simeq \Big (\bigcup _{x \in M_{\textrm{reg}}} \, \{x\}\times T_x(G \cdot x)^\perp\, \Big ) \cup\Big ( \{N\}\times  \left( T_NM\backslash \{0\}\right) \Big ) \cup \Big ( \{S\} \times  \left( T_SM\backslash \{0\}\right)\Big ) ,\\
\widetilde \Omega_{\textrm{reg}} &\simeq  \big ( (-1,1) \times \R\big )  \cup \big ( \mklm{1}\times (0,\infty)  \big ) \cup \big ( \mklm{-1} \times (0, \infty) \big )\simeq \R^2\backslash \{(0,1),(0,-1)\} ,
\end{align*}
where $M_{\textrm{reg}}=M\backslash\mklm{N,S}$, $M_{\textrm{reg}}/G\simeq (-1,1)$. The diffeomorphism $\widetilde \Omega_{\textrm{reg}} \simeq \R^2\backslash \{(0,1),(0,-1)\}$ is illustrated  in Figures \ref{fig:fig1} and \ref{fig:fig2} for  $S^2$ with the round metric, which is the generic case since $M$ is $\SO(2)$-equivariantly diffeomorphic to it.  
\begin{figure}[h!]
\centering
\hspace{-6em}\begin{minipage}{0.45\linewidth}
\centering
\hspace*{3em}\includegraphics[width=\linewidth]{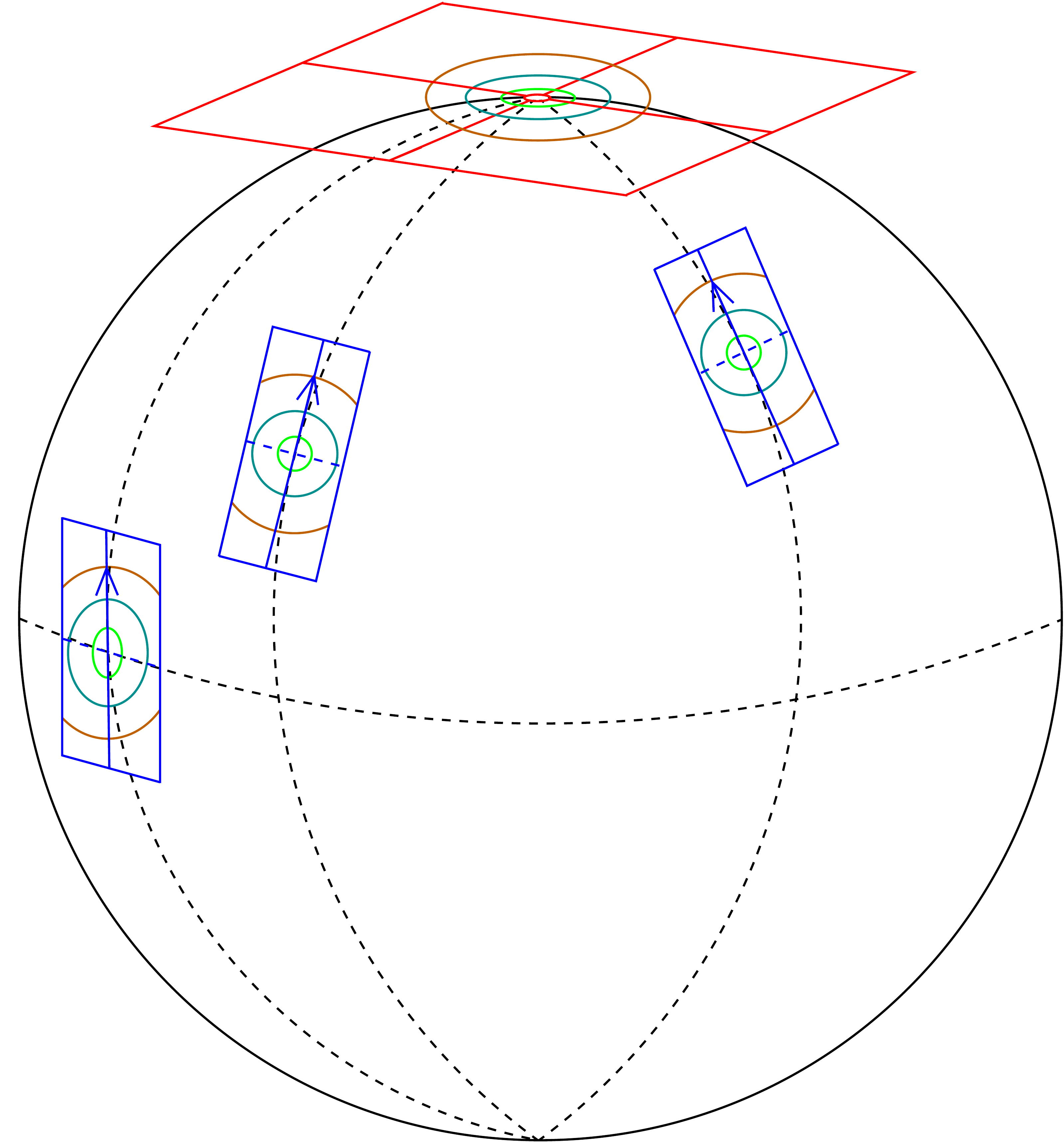}
\parbox{1.33\linewidth}{\caption{The space $T_N^*S^2\backslash\{0\}$ (red) and three co-tangent  spaces (blue) with arrows that represent elements of $\Omega_{\textrm{reg}}$. The three circles in each plane (brown, teal, green) correspond to the intersection of the plane with $\Sigma_c$ for three different values of $c$.}\label{fig:fig1}}
\end{minipage}\hspace*{8em}
\hspace*{-4em}\begin{minipage}{0.45\linewidth}
\centering
\hspace*{4em}\includegraphics[width=0.9\linewidth]{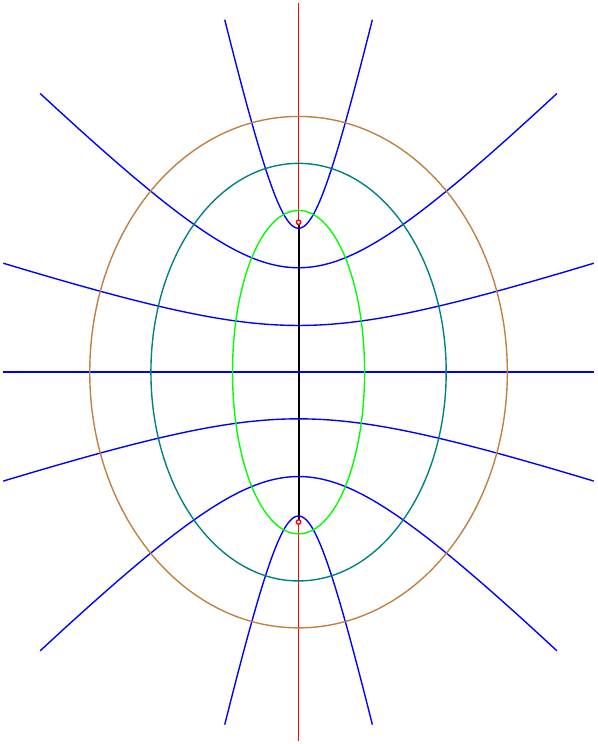}
\parbox{1.33\linewidth}{\vspace*{-0.75em}\caption{Under the projection $\Omega_{\textrm{reg}}\to \widetilde{\Omega}_{\textrm{reg}}$, $T_N^*S^2\backslash\{0\}$ and $T_S^*S^2\backslash\{0\}$ collapse to open half-lines (red) and for every $x\in S^2\backslash\{N,S\}$, $T^*_xS^2\cap \Omega_{\textrm{reg}}$ collapses to a line (blue). The ellipses (brown, teal, green) depict $\widetilde{\Sigma}_c$ for three different values of $c$.\label{fig:fig2}}}
\end{minipage}
\end{figure}
Under the diffeomorphism $\widetilde \Omega_{\textrm{reg}} \simeq \R^2\backslash \{(0,1),(0,-1)\}$, the hypersurface $\widetilde \Sigma_c$ corresponds to an ellipse with radii determined by $c$, as illustrated in Figure \ref{fig:fig2}. Let now $G\cdot(x,\xi) \in \widetilde \Sigma_c$. Since $\xi \in T_x(G \cdot x)^\perp$, the geodesic flow $\varphi_t$ transports $(x,\xi)$ around curves in $T^*M$ that project onto meridians through $N$ and $S$, so that the reduced geodesic flow $\tphi_t(G\cdot (x,\xi))\equiv G\cdot\varphi_t(x,\xi)$ through $G\cdot (x,\xi)$ corresponds to a periodic flow around the ellipse $\widetilde \Sigma_c$. Consequently, the only subsets of $\widetilde \Sigma_c$  which are invariant under $\widetilde \varphi_t$ are the whole ellipse and the empty set, implying that the reduced flow $\tphi_t$ on $\widetilde \Sigma_c$ is ergodic for arbitrary $c>0$. Besides, note that the points on the segment between $(0,1)$ and $ (0,-1)$ are stationary under $\widetilde \varphi_t$. 

Next, let us check what happens for a general compact connected Lie group $G$. Due to the definition of $SG$ and its $G$-action, it is clear that $SG/G$ is homeomorphic to $[-1,1]$ and, due to (\ref{eq:omega5656}), that $\widetilde{\Omega}_{\textrm{reg}}$ is diffeomorphic to $\R^2\backslash \{(0,1),(0,-1)\}$ whenever $SG$ is a smooth manifold, so that we always obtain not only an analogous but essentially {the same} picture as depicted in Figure \ref{fig:fig2}. Hence, for $G=S^3$, the  reduced geodesic flow is given by a periodic flow around an ellipse, and therefore ergodic.\end{proof}

We shall now apply some of our results from Section \ref{sec:laplacian} to a surface of revolution diffeomorphic to the $2$-sphere. Thus, let $M\subset \R^3$ be given by rotating a suitable smooth curve $\gamma:[0,L]\to \R_{x\geq 0}^2$ in the $xz$-half plane around the $z$-axis in $\R^3$. In particular, $\gamma'(t)$ has to be perpendicular to the $z$-axis at $\gamma(0)$ and $\gamma(L)$.  We assume that $\gamma(0)=(0,-1)$ and $\gamma(L)=(0,1)$ and that $\gamma$ is parametrized by arc length, so that  $\gamma: [0,L] \ni \theta\mapsto (R(\theta),z(\theta))$, where  $R:[0,L]\to [0,\infty)$, $R(0)=R(L)=0$, $R(\theta)>0$ for $\theta\in(0,L) $ corresponds to the distance to the $z$-axis, and  $z:[0,L]\to\R$ is smooth. This leads to a parametrization of $M$ according to 
$$M=\big\{(R(\theta)\cos \phi,R(\theta)\sin\phi,z(\theta)),\;\theta\in[0,L],\;\phi\in [0,2\pi)\big\}.$$ Now, let $M$ be endowed with the induced metric on $\R^3$. The Laplace-Beltrami operator $\Delta$ on $M$ commutes with $\partial_\phi$, so that separation of variables leads to a Hilbert basis of $L^2(M)$ of joint eigenfunctions of both operators of the form
\begin{equation}
e_{l,m}(\phi,\theta)=f_{l,m}(\theta)e^{im\phi}, \qquad (l,m)\in \mathcal I\subset \Z\times\Z. \label{eq:elm}
\end{equation}
The irreducible representations of $\SO(2)\simeq S^1=\{e^{i\varphi},\;\varphi\in[0,2\pi)\}\subset \C$ are all $1$-dimensional, and given by the characters $\chi_k(e^{i\phi})=e^{-ik\phi}$, $k \in \Z$. Thus, each subspace $\C \cdot e_{l,m}$ corresponds to an irreducible representation of $\SO(2)$, and $\{e_{l,m}\}_{l:\,(l,m)\in \mathcal{I}}$ is a Hilbert basis of $L^2_{\chi_{m}}(M)$. Furthermore,  $|e_{l,m}|^2$ is manifestly $\SO(2)$ invariant. 
Theorem \ref{thm:equishnirelman34} then yields for each $m \in \mathbb{Z}\simeq \widehat{\SO(2)}$  a subsequence $\{e_{l_k,m}\}_{k\in\N}$ of density $1$ in $\{e_{l,m}\}_{l:\,(l,m)\in \mathcal{I}}$ such that for all $a\in C(\widetilde M)$
\begin{align}
\int_{\widetilde M} a|e_{l_k,m}|^2 \d\widetilde{M}
\overset{k\to\infty}{\longrightarrow}\; \Big(\int_{\widetilde M} \frac{\d{\widetilde M}}{\text{vol}}\Big)^{-1}\int_{\widetilde M} a\frac{\d{\widetilde M}}{\text{vol}},\label{eq:statement60}
\end{align}
where as before  $\widetilde M=M/\SO(2)$.
Let us write (\ref{eq:statement60}) more explicitly. An $\SO(2)$-orbit of a point $x\in M$ with coordinates $(\phi,\theta)$ is of the form $\{(\phi',\theta):\;0<\phi'<2\pi\}$, up to a set of measure zero with respect to the induced orbit measure  $d\mu_{\SO(2)\cdot x}\equiv R(\theta) \d\phi$, and we obtain $\text{vol} (\SO(2)\cdot x)=\int_{0}^{2\pi}R(\theta) \,d\phi=2\pi R(\theta). $
Furthermore, $\widetilde M$ is homeomorphic to the closed interval $[0,L]\subset \R$, and the pushforward measure on $\widetilde M$ is given by $d\widetilde{M}(\theta)\equiv2\pi R(\theta) \d\theta$, where we identified $\SO(2)\cdot x$ and $\theta$. 
Summing up, (\ref{eq:statement60}) yields 
\begin{equation}
2\pi\int_{0}^{L}a(\theta) |f_{l_k,m}|^2(\theta) R(\theta)\d\theta \;
\overset{k\to\infty}{\longrightarrow}\; \frac{1}{L}\int_{0}^{L}a(\theta)\d \theta,\qquad a\in C([0,L]), \label{eq:statement62}
\end{equation}
which is a result about  weak convergence of measures on $\widetilde M\cong [0,L]$. Formulated on $M$, Corollary \ref{cor:24.08.2015} yields that for each $m$ there is a subsequence $\{f_{l_k,m}\}_{k\in \N}$ of density $1$ in $\{f_{l,m}\}_{l:\,(l,m)\in \mathcal{I}}$ such that one has the weak convergence of measures
\begin{equation}
|f_{l_k,m}|^2\d M\quad\stackrel{k\to \infty}{\longrightarrow}\quad \frac{1}{2\pi L}\frac{dM}{R}.\label{eq:surfresult}
\end{equation}
Here, $\frac{dM}{R}$ is to be understood as the extension by zero of the smooth measure $dM(\phi,\theta)/R(\phi,\theta)$ from $\{(\phi,\theta),\;\theta\in(0,L)\}$ to $\{(\phi,\theta),\;\theta\in[0,L]\}$, and we used that $\text{vol}_{\frac{dM}{R}}M=2\pi L$. In particular, the obtained quantum limit on $M$ is, up to a constant, related to the Riemannian volume density on $M$ by the reciprocal of the distance function $R$, which  tends to infinity towards the poles. This is  illustrated in Figure \ref{fig:3}, where the function $1/R$ is plotted on a surface of revolution.
\begin{figure}[h!]
\includegraphics[width=0.17\linewidth]{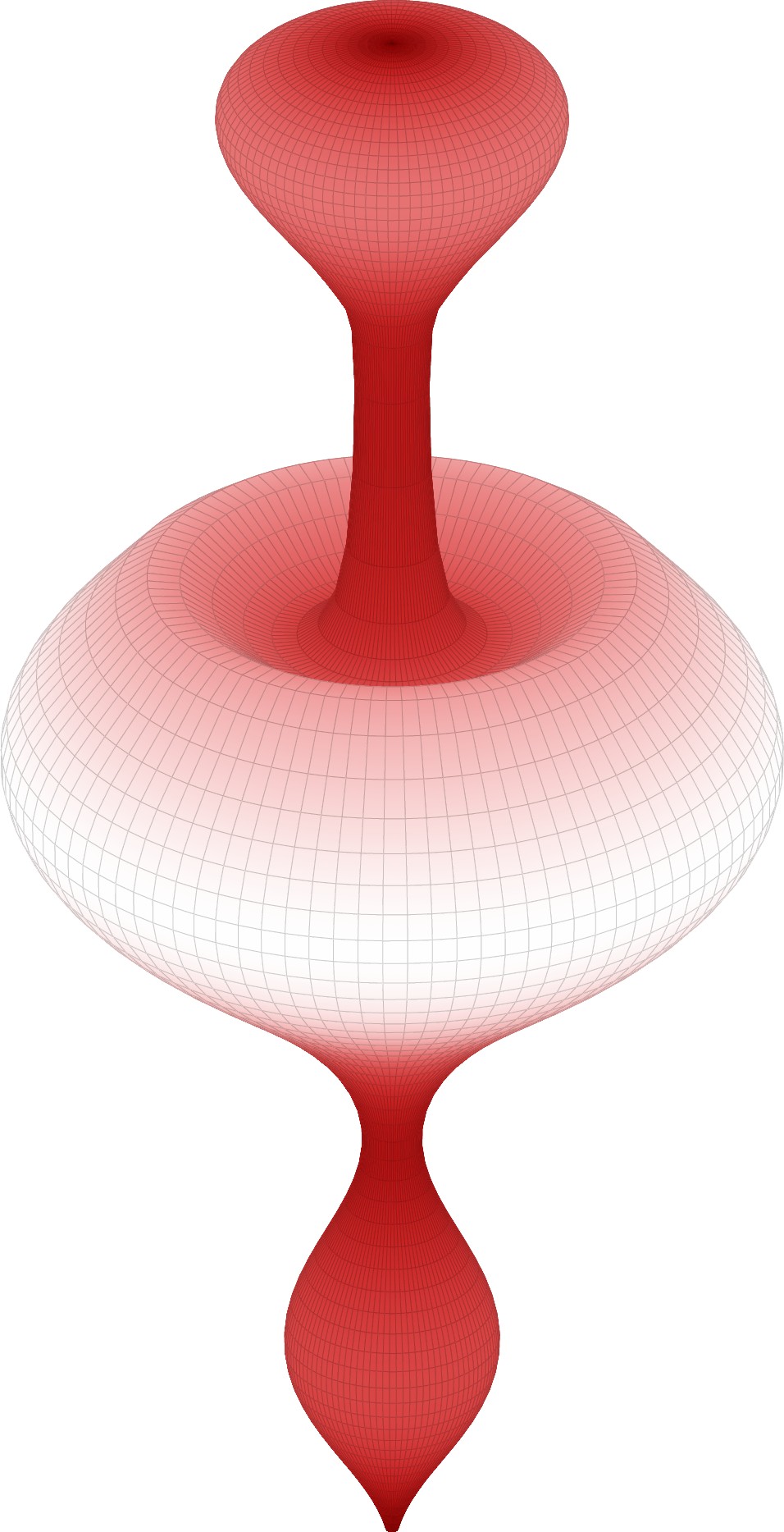}
\vspace{-.2cm}

\caption{A quantum limit on a surface of revolution.}\label{fig:3}
\end{figure}

So far, for simplicity of presentation, we have restricted ourselves to the special case of considering a single fixed isotypic component, which means keeping the index $m$ fixed. Even in this case, we do not know whether the results  (\ref{eq:statement62}) and (\ref{eq:surfresult}) are known for general surfaces of revolution. Having actually the more general Theorem \ref{thm:equishnirelman3} at hand, the results (\ref{eq:statement62}) and (\ref{eq:surfresult}) directly generalize to the situation of a semiclassical character family of growth rate $\vartheta<\frac{1}{5}$ since the dimensions of the irreducible representations are all $1$ in this case, and all principal isotropy groups are trivial, so that $[\pi_{\chi}|_{H}:\mathds{1}]=d_\chi=1$. 


Physically, one can interprete these results as follows. For each family of symmetry types that  does not grow too fast in the high-energy limit, there is a sequence of quantum states such that  the corresponding sequence of probability densities on $M$ converges weakly and with density $1$ in the high-energy limit  to the probability density of finding  within a certain surface element of $M$ a classical particle  with known energy and zero angular momentum with respect to the $z$-axis, but unknown momentum. 

In the simplest case of the standard $2$-sphere $M=S^2$ with the round metric, the eigenfunctions are explicitly known, and we show in the following that at least our simplest result (\ref{eq:statement62}) for fixed isotypic components is implied by the classical theory of spherical harmonics. In fact, we will  see  that one does not  need to pass to a subsequence of density $1$. Recall  from Section \ref{sec:1.5} that the eigenvalues of $-\Delta$ on $S^2$ are given by the numbers $l(l+1)$, $ l=0,1,2,3\dots$, and the corresponding eigenspaces $E_l$ are of dimension $2l+1$. They are  spanned by the spherical harmonics
\begin{equation}
Y_{l,m}(\phi,\theta)=\sqrt{\frac{2l+1}{4\pi}\frac{(l-m)!}{(l+m)!}} P_{l,m}(\cos \theta)e^{im\phi}, \qquad 0\leq \phi<2\pi, \, 0 \leq \theta < \pi, \label{eq:Ylm}
\end{equation}
where $m \in \Z$, $|m|\leq l$, and  $P_{l,m}$ are the associated Legendre polynomials
\begin{equation}
P_{l,m}(x)=\frac{(-1)^m}{2^ll!}\left(1-x^2\right)^{\frac{m}{2}}\frac{d^{l+m}}{dx^{l+m}}\left(x^2-1\right)^{l},  \label{eq:legendre}
\end{equation}
compare  (\ref{eq:elm}). Each subspace $\C \cdot Y_{l,m}$ corresponds to an irreducible representation of $\SO(2)$, and each irreducible representation $\chi_k$ with $|k|\leq l$ occurs in the eigenspace $E_l$ with multiplicity $1$. The situation is illustrated in Figure 1.1 of Part I. For each $m$, the result (\ref{eq:statement62}) now turns into the following result about Legendre polynomials:
\begin{equation}
\frac{2l_k+1}{2}\frac{(l_k-m)!}{(l_k+m)!}\int_{0}^{\pi}a(\theta)\sin(\theta)|P_{l_k,m}(\cos \theta)|^2 \d\theta \;
\overset{k\to\infty}{\longrightarrow}\; \frac{1}{\pi}\int_{0}^{\pi}a(\theta)\d \theta\qquad \forall\;a\in C([0,\pi]).\label{eq:statement655}
\end{equation}
We now show the following 
\begin{prop}
For fixed $m$, (\ref{eq:statement655}) holds for the full sequence of Legendre polynomials, that is,  if $l_k$ is replaced by $l$ and ``$k \to \infty$'' is replaced by ``$l\to \infty$''.
\end{prop}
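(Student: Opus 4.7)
The plan is to invoke the classical Laplace asymptotic expansion for the associated Legendre functions. Using the standard reflection identity $P_{l,-m}=(-1)^{m}\tfrac{(l-m)!}{(l+m)!}P_{l,m}$, the quantity $\tfrac{(l-m)!}{(l+m)!}|P_{l,m}(\cos\theta)|^{2}$ is invariant under $m\mapsto -m$, so it suffices to treat $m\geq 0$. For such $m$ one has, uniformly on compact subsets of $(0,\pi)$ as $l\to\infty$,
\[
P_{l,m}(\cos\theta) \;=\; \frac{(l+m)!}{l!}\sqrt{\frac{2}{\pi l\sin\theta}}\,\cos\bigl((l+\tfrac{1}{2})\theta - \tfrac{\pi}{4} + \tfrac{m\pi}{2}\bigr) \;+\; O\bigl(l^{m-3/2}\bigr).
\]

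Squaring this expansion, multiplying by $\tfrac{2l+1}{2}\tfrac{(l-m)!}{(l+m)!}\sin\theta$, and using the elementary limits
\[
\frac{(l-m)!\,(l+m)!}{(l!)^{2}}=\prod_{j=1}^{m}\frac{l+j}{l-m+j}\longrightarrow 1,\qquad \frac{2l+1}{\pi l}\longrightarrow \frac{2}{\pi},
\]
yields, uniformly on compact subsets of $(0,\pi)$,
\[
\tfrac{2l+1}{2}\,\tfrac{(l-m)!}{(l+m)!}\,\sin\theta\,|P_{l,m}(\cos\theta)|^{2} \;=\; \tfrac{2}{\pi}\cos^{2}\bigl((l+\tfrac{1}{2})\theta - \tfrac{\pi}{4}+\tfrac{m\pi}{2}\bigr)+o(1).
\]
Writing $\cos^{2}x=\tfrac{1}{2}+\tfrac{1}{2}\cos(2x)$ and applying the Riemann--Lebesgue lemma to the rapidly oscillating cosine, this already establishes (\ref{eq:statement655}), with $l_{k}$ replaced by $l$, for every test function $a\in C([0,\pi])$ whose support is contained in the open interval $(0,\pi)$.

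To extend the convergence to arbitrary $a\in C([0,\pi])$, the crucial point is that the classical orthogonality relation $\int_{-1}^{1}|P_{l,m}(x)|^{2}\,dx=\tfrac{2}{2l+1}\tfrac{(l+m)!}{(l-m)!}$ makes $d\nu_{l}(\theta):=\tfrac{2l+1}{2}\tfrac{(l-m)!}{(l+m)!}\sin\theta\,|P_{l,m}(\cos\theta)|^{2}\,d\theta$ a probability measure on $[0,\pi]$, exactly as is $\tfrac{1}{\pi}\,d\theta$. Testing against cut-offs $\chi_{\epsilon}\in\CT((0,\pi),[0,1])$ with $\chi_{\epsilon}\equiv 1$ on $[\epsilon,\pi-\epsilon]$, the interior convergence above forces $\liminf_{l\to\infty}\nu_{l}([\epsilon,\pi-\epsilon])\geq(\pi-2\epsilon)/\pi$, so that the residual mass $\nu_{l}\bigl([0,\epsilon]\cup[\pi-\epsilon,\pi]\bigr)$ is at most $2\epsilon/\pi+o(1)$. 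A standard $\epsilon$-argument using $\|a\|_{\infty}$ then promotes the weak convergence $\nu_{l}\to \tfrac{1}{\pi}\,d\theta$ from the open interior to the full closed interval, which is the desired statement.

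The main obstacle is precisely the breakdown of the Laplace expansion at the endpoints $\theta\in\{0,\pi\}$, where the factor $1/\sqrt{\sin\theta}$ diverges and the expansion is non-uniform; a direct pointwise analysis there would require the finer Bessel-function regime governing the transition zone. The proposed argument circumvents this altogether by exploiting the probability-measure structure: conservation of total mass rules out any leakage of mass to the poles.
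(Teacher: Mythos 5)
Your proof is correct. The interior asymptotics and the treatment of the oscillatory $\cos^2$ term are essentially the same as the paper's (the paper phrases the Legendre asymptotics as a formula for $l^{-m}P_{l,m}(\cos\theta)$ and your $\frac{(l+m)!}{l!}$ coefficient differs from $l^m$ only by a factor $1+O(l^{-1})$, which is absorbed into the error term, so the two expansions agree). Where you diverge from the paper is in passing from compact subsets of $(0,\pi)$ to the full closed interval. The paper asserts that this step follows from Fatou's lemma, deferring the details to a dissertation; you instead observe via orthogonality that $d\nu_l=\tfrac{2l+1}{2}\tfrac{(l-m)!}{(l+m)!}\sin\theta\,|P_{l,m}(\cos\theta)|^2\,d\theta$ is already a probability measure on $[0,\pi]$, which yields a tightness estimate ruling out mass escape to the poles, and then a standard triangle-inequality argument. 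This is a clean, fully self-contained route around the non-uniformity of the Laplace asymptotics at the endpoints and avoids the Bessel-regime analysis entirely; it is arguably more transparent than a bare citation of Fatou, and in spirit it is likely what the deferred details amount to (Fatou gives the $\liminf$ lower bound, while the probability normalization supplies the matching $\limsup$ upper bound). The only minor nit: you should make explicit that the test functions $\chi_\epsilon$ are chosen so that $1-\chi_\epsilon$ is supported in a small neighborhood of the endpoints, so that the tightness bound $\limsup_l\nu_l(\supp(1-\chi_\epsilon))\leq 2\epsilon/\pi$ is exactly what controls the residual term $|\int a(1-\chi_\epsilon)\,d\nu_l|\leq\|a\|_\infty\,\nu_l(\supp(1-\chi_\epsilon))$; as written the logic is clear enough but this detail ties the two steps together.
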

\begin{proof} Let us begin by recalling  the following classical result about the asymptotic behavior of Legendre polynomials \cite[page 303]{hobson}.  For fixed $m\in \Z$ and each small $\varepsilon>0$ one has
\begin{equation}
\frac{1}{l^m}P_{l,m}(\cos \theta)=\left(\frac{2}{l\pi \sin\theta}\right)^{1/2}\cos\left(\left(l+\frac{1}{2}\right)\theta-\frac{\pi}{4}+\frac{m\pi}{2}\right)+\mathrm{O}\left(l^{-3/2}\right)\label{eq:legendreasymp}
\end{equation}
 as $l\to \infty$
uniformly in  $\theta \in (\varepsilon, \pi-\varepsilon)$. From (\ref{eq:Ylm}) and (\ref{eq:legendreasymp}) we therefore obtain
\begin{align*}
\widetilde{|Y_{l,m}|}(\theta)^2 &= \left| \sqrt{ \frac{2l+1}{4\pi} \frac{(l-m)!}{(l+m)!} } P_{l,m}(\cos \theta) \right|^2 = \frac{2l+1}{4\pi}\frac{(l-m)!}{(l+m)!} l^{2m} \left|\frac{1}{l^m}P_{l,m}(\cos \theta)\right|^2 \\ 
&=\frac{2l+1}{4\pi}\frac{(l-m)!}{(l+m)!}l^{2m}\left|\left(\frac{2}{l\pi \sin \theta}\right)^{1/2}\cos\left(\left(l+\frac{1}{2}\right)\theta-\frac{\pi}{4}+\frac{m\pi}{2}\right)+\mathrm{O}\left(l^{-3/2}\right)\right|^2 \\
&=\frac{2l+1}{4\pi}\frac{(l-m)!}{(l+m)!}l^{2m}\left(\frac{2}{l\pi \sin \theta}\cos^2 \left(\left(l+\frac{1}{2}\right)\theta-\frac{\pi}{4}+\frac{m\pi}{2}\right)+\mathrm{O}\left(l^{-2}\right)\right).
\end{align*}
The asymptotic relation
\begin{equation}
(l-m)!/(l+m)!\sim l^{-2m}\quad \text{as }l\to\infty\label{eq:lasymp}
\end{equation}
implies that $\frac{(l-m)!}{(l+m)!}l^{2m}$ is bounded in $l$, so we can use the simple relation $\frac{2l+1}{l}=2+\mathrm{O}(l^{-1})$ to obtain
\begin{equation}
\widetilde{|Y_{l,m}|}(\theta)^2 =\frac{(l-m)!}{(l+m)!}l^{2m}\frac{1}{\pi^2 \sin \theta}\cos^2\left(\left(l+\frac{1}{2}\right)\theta-\frac{\pi}{4}+\frac{m\pi}{2}\right)+\mathrm{O}\left(l^{-1}\right),\label{eq:asymp66}
\end{equation}
uniformly for $\theta\in(\varepsilon,\pi-\varepsilon)$ and each small $\varepsilon>0$. Now let $f\in C([0,\pi], \R)$ and choose $\varepsilon>0$. Due to the uniform estimate (\ref{eq:asymp66}) and boundedness of the integration domain we get
\begin{align}
\begin{split}
2\pi\int_{\varepsilon}^{\pi-\varepsilon}&f(\theta)\widetilde{|Y_{l,m}|}(\theta)^2\sin\theta \d\theta \\
&=2\pi\int_{\varepsilon}^{\pi-\varepsilon}f(\theta)\frac{(l-m)!}{(l+m)!}l^{2m}\frac{1}{\pi^2 \sin(\theta)}\cos^2\left(\left(l+\frac{1}{2}\right)\theta-\frac{\pi}{4}+\frac{m\pi}{2}\right)\sin(\theta)\d\theta+\mathrm{O}\left(l^{-1}\right) \\
&=\frac{2}{\pi}\frac{(l-m)!}{(l+m)!}l^{2m}\int_{\varepsilon}^{\pi-\varepsilon}f(\theta)\cos^2\left(\left(l+\frac{1}{2}\right)\theta-\frac{\pi}{4}+\frac{m\pi}{2}\right)\d\theta+\mathrm{O}\left(l^{-1}\right).\label{eq:asymp67}
\end{split}
\end{align}
The oscillatory integral in (\ref{eq:asymp67}) has the limit
\begin{equation}
\lim_{l\to \infty}\int_{\varepsilon}^{\pi-\varepsilon}f(\theta)\cos^2\left(\left(l+\frac{1}{2}\right)\theta-\frac{\pi}{4}+\frac{m\pi}{2}\right)\d\theta
= \lim_{l\to \infty}\int_{\varepsilon}^{\pi-\varepsilon}f(\theta)\cos^2(l\theta)\d\theta=\frac{1}{2}\int_{\varepsilon}^{\pi-\varepsilon}f(\theta)\d\theta, \label{eq:oszint}
\end{equation}
where the final equality is true because $\lim_{l\to \infty}\int_{\varepsilon}^{\pi-\varepsilon}f(\theta)\cos^2(l\theta)\d\theta=\lim_{l\to \infty}\int_{\varepsilon}^{\pi-\varepsilon}f(\theta)\sin^2(l\theta)\d\theta$ and $\sin^2+\cos^2=1$. Using (\ref{eq:oszint}) and (\ref{eq:lasymp}) we conclude from (\ref{eq:asymp67}) for each small $\varepsilon>0$ that 
\begin{equation}
\lim_{l\to \infty}2\pi\int_{\varepsilon}^{\pi-\varepsilon}f(\theta)\widetilde{|Y_{l,m}|}(\theta)^2\sin(\theta) \d\theta=\frac{1}{\pi}\int_{\varepsilon}^{\pi-\varepsilon}f(\theta)\d\theta. \label{eq:asymp34}
\end{equation}
Noting that $\limsup_{x\to\infty} \cos^2(x)\leq 1$ and $\liminf_{x\to\infty} \cos^2(x)\leq 1$ exist, the $\varepsilon=0$ version of (\ref{eq:asymp34}) now follows from (\ref{eq:asymp66}) and (\ref{eq:asymp34}) using Fatou's Lemma. For the details of this, see \cite{kuester}.
\end{proof}

\begin{rem}\label{rem:last1}We do not know whether for the standard 2-sphere  Theorem \ref{thm:equishnirelman3} is directly implied by the classical theory  of Legendre polynomials. Moreover, it is crucial that $m$ grows slower than $l$ as $l\to \infty$. Indeed, if one considers the diagonal sequence $Y_{l,l}$ of zonal spherical harmonics, it is not difficult to see that, contrasting with our results, they concentrate along the equator in  $S^2$ as $l \to \infty$ in the sense that for a given $\epsilon>0$ there is a constant $c(\epsilon)>0$ such that 
\bqn 
\intop_{S^2-B_\epsilon} |Y_{l,l}|^2 \d S^2=\mathrm{O}(e^{-c(\epsilon) l}),
\eqn 
where $B_\epsilon$ denotes the tubular neighborhood of the equator of width $\epsilon$,  compare \cite{colindv} and Figure \ref{fig:zonal}, yielding qualitatively quite different limit measures. 
\end{rem}


\providecommand{\bysame}{\leavevmode\hbox to3em{\hrulefill}\thinspace}
\providecommand{\MR}{\relax\ifhmode\unskip\space\fi MR }
\providecommand{\MRhref}[2]{%
  \href{http://www.ams.org/mathscinet-getitem?mr=#1}{#2}
}
\providecommand{\href}[2]{#2}


\begin{thebibliography}{10}

\bibitem{adem-leida-ruan}
A.~Adem, J.~Leida, and Y.~Ruan, \emph{Orbifolds and stringy topology},
  Cambridge University Press, Cambridge, 2007.

\bibitem{avacumovic}
V.~G. Avacumovi\v{c}, \emph{{\"{U}ber die Eigenfunktionen auf geschlossenen
  Riemannschen Mannigfaltigkeiten}}, Math. Z. \textbf{65} (1956), 327--344.

\bibitem{bergery}
L.~B. Bergery and J.-P. Bourguignon, \emph{Laplacians and {R}iemannian
  submersions with totally geodesic fibres}, Illinois Journal of Mathematics
  \textbf{26} (1982), no.~2, 181--200.

\bibitem{borel63}
A.~Borel, \emph{{Compact Clifford-Klein forms of symmetric spaces}}, Topology
  \textbf{2} (1963), 111--122.

\bibitem{brin}
M.~Brin, \emph{Ergodicity of the geodesic flow}, Lectures on spaces of
  nonpositive curvature ({Appendix}), {Oberwolfach Seminars}, vol.~25,
  Birkh\"auser Basel, 1995.

\bibitem{bruening81}
J.~{Br\"uning}, \emph{Invariant eigenfunctions of the {Laplacian} and their
  asymptotic distribution}, Global differential geometry and global analysis,
  Lect. Notes Math., vol. 838, Proc. Colloq., Berlin 1979, 1981, pp.~69--81.

\bibitem{bruening-heintze79}
J.~Br\"uning and E.~Heintze, \emph{Representations of compact {Lie groups} and
  elliptic operators}, Inv. math. \textbf{50} (1979), 169--203.

\bibitem{bucicovschi}
B.~Bucicovschi, \emph{{S}eeley's theory of pseudodifferential operators on
  orbifolds}, arXiv Preprint math/9912228, 1999.

\bibitem{bunke-olbrich06}
Ulrich {Bunke} and Martin {Olbrich}, \emph{{On quantum ergodicity for vector
  bundles.}}, {Acta Appl. Math.} \textbf{90} (2006), no.~1-2, 19--41 (English).

\bibitem{colindv}
Y.~Colin~de Verdi\`{e}re, \emph{Ergodicit\'{e} et fonctions propres du
  {L}aplacien}, Communications in Mathematical Physics \textbf{102} (1985),
  no.~3, 497--502.

\bibitem{donnelly78}
H.~Donnelly, \emph{{G-spaces, the asymptotic splitting of $L^2(M)$ into
  irreducibles}}, Math. Ann. \textbf{237} (1978), 23--40.

\bibitem{dryden-et-al}
Emily~B. {Dryden}, Carolyn~S. {Gordon}, Sarah~J. {Greenwald}, and David~L.
  {Webb}, \emph{{Asymptotic expansion of the heat kernel for orbifolds.}},
  {Mich. Math. J.} \textbf{56} (2008), no.~1, 205--238.

\bibitem{nonnenmacher}
F.~Faure, N.~Nonnenmacher, and S.~de~Bi\`evre, \emph{Scarred eigenstates for
  quantum cat maps of minimal periods}, Comm. Math. Phys. \textbf{239} (2003),
  449--492.

\bibitem{guillemin-uribe90}
V.~Guillemin and A.~Uribe, \emph{Reduction and the trace formula}, J. Diff.
  Geom. \textbf{32} (1990), no.~2, 315--347.

\bibitem{hassel10}
A.~Hassel, \emph{Ergodic billiards that are not quantum unique ergodic}, Ann.
  Math. \textbf{171} (2010), 605--618.

\bibitem{hobson}
E.~W. Hobson, \emph{The theory of spherical and ellipsoidal harmonics},
  Cambridge University Press, 1931.

\bibitem{hopf}
E.~Hopf, \emph{{Statistik der geod\"atischen Linien in Mannigfaltigkeiten
  negativer Kr\"ummung.}}, {Ber. Verh. S\"achs. Akad. Leipzig 91, 261-304},
  1939.

\bibitem{hoermander68}
L.~H\"ormander, \emph{The spectral function of an elliptic operator}, Acta
  Math. \textbf{121} (1968), 193--218.

\bibitem{zelditchjacobson}
D.~Jacobson and S.~Zelditch, \emph{Classical limits of eigenfunctions for some
  completely integrable systems}, Emerging Applications of Number Theory, IMA
  Volumes in Mathematics and its Applications, vol. 109, Springer, 1999.

\bibitem{knapp}
A.~Knapp, \emph{{Representation theory of semisimple groups. An overview based
  on examples}}, Princeton Mathematics Series, vol.~36, Princeton University
  Press, 1986.

\bibitem{kordyukov12}
Y.~A. Kordyukov, \emph{Classical and quantum ergodicity on orbifolds}, Russ. J.
  Math. Phys. \textbf{19} (2012), no.~3, 307--316.

\bibitem{kuester}
B.~K\"uster, dissertation (in preparation), Marburg University.

\bibitem{kuester15}
\bysame, \emph{{Semiclassical functional calculus for $h$-dependent
  functions}}, arXiv Preprint 1507.06214, 2015.

\bibitem{kuester-ramacher15a}
B.~K\"uster and P.~Ramacher, \emph{{Semiclassical analysis and symmetry
  reduction I. Equivariant Weyl law for invariant Schr\"odinger operators on
  compact manifolds}}, arXiv Preprint 1508.03540, 2015.

\bibitem{levitan52}
B.~M. Levitan, \emph{On the asymptoptic behavior of the spectral function of a
  self-adjoint differential equation of the second order}, Izv. Akad. Nauk SSSR
  Ser. Mat. \textbf{16} (1952), 325--352.

\bibitem{lindenstrauss06}
E.~Lindenstrauss, \emph{Invariant measures and arithmetic quantum unique
  ergodicity}, Ann. of Math. \textbf{163} (2006), 165--219.

\bibitem{marklof}
Jens {Marklof}, Stephen {O'Keefe}, and Steve {Zelditch}, \emph{{Weyl's law and
  quantum ergodicity for maps with divided phase space. With an appendix:
  Converse quantum ergodicity.}}, {Nonlinearity} \textbf{18} (2005), no.~1,
  277--304.

\bibitem{moerdijk-mrcun}
Ieke {Moerdijk} and J.~{Mr\v{c}un}, \emph{{Introduction to foliations and Lie
  groupoids.}}, Cambridge: Cambridge University Press, 2003 (English).

\bibitem{ortega-ratiu}
J.~P. Ortega and T.~S. Ratiu, \emph{Momentum maps and {H}amiltonian reduction},
  Progress in Mathematics, vol. 222, Birkh\"auser Boston Inc., Boston, MA,
  2004.

\bibitem{ramacher10}
P.~Ramacher, \emph{{Singular equivariant asymptotics and Weyl's law. On the
  distribution of eigenvalues of an invariant elliptic operator}}, to be
  published in Crelle's Journal, DOI 10.1515/crelle-2014-0008, 2014.

\bibitem{ramacher15a}
\bysame, \emph{Addendum to {``Singular equivariant asymptotics and Weyl's
  law''}}, arXiv Preprint 1507.05611v2, 2015.

\bibitem{rudnick}
Z.~Rudnick and P.~Sarnak, \emph{The behaviour of eigenstates of arithmetic
  hyperbolic manifolds}, Comm. Math. Phys. \textbf{161} (1994), 195--213.

\bibitem{schrader}
R.~Schrader and M.~Taylor, \emph{{Semiclassical asymptotics, gauge fields, and
  quantum chaos.}}, {J. Funct. Anal.} \textbf{83} (1989), no.~2, 258--316.

\bibitem{selberg60}
A.~Selberg, \emph{On discontinuous groups in higher-dimensional symmetric
  spaces}, Contributions to functional theory, Bombay (1960), 147--164.

\bibitem{shnirelman}
A.~Shnirelman, \emph{Ergodic properties of eigenfunctions}, Uspekhi Mat. Nauka
  \textbf{29} (1974), 181--182.

\bibitem{silberman-venkatesh07}
L.~Silberman and A.~Venkatesh, \emph{On quantum unique ergodicity for locally
  symmetric spaces}, GAFA Geometric And Functional Analysis \textbf{17} (2007),
  no.~3, 960--998.

\bibitem{lerman-sjamaar}
R.~Sjamaar and E.~Lerman, \emph{Stratified symplectic spaces and reduction},
  Ann. of Math. \textbf{134} (1991), 375--422.

\bibitem{stanhope-uribe}
E.~Stanhope and A.~Uribe, \emph{The spectral function of a riemannian
  orbifold}, Ann. Glob. Anal. Geom. (2011), no.~40, 47--65.

\bibitem{warner72}
G.~Warner, \emph{Harmonic analysis on semi-simple {Lie} groups}, vol.~I,
  Springer--Verlag, Berlin, Heidelberg, New York, 1972.

\bibitem{watson}
B.~Watson, \emph{Manifold maps commuting with the {L}aplacian}, Journal of
  Differential Geometry \textbf{8} (1973), no.~1, 85--94.

\bibitem{weyl}
H.~Weyl, \emph{{Das asymptotische Verteilungsgesetz der Eigenwerte linearer
  partieller Differentialgleichungen (mit einer Anwendung auf die Theorie der
  Hohlraumstrahlung)}}, Math. Ann. \textbf{71} (1912), 441--479.

\bibitem{yosida}
K.~Yosida, \emph{On the differentiability and the representation of
  one--parameter semi--groups of linear operators}, J. Math. Soc. Japan
  \textbf{1} (1948), 15--21.

\bibitem{zelditch1987}
S.~Zelditch, \emph{Uniform distribution of eigenfunctions on compact hyperbolic
  surfaces}, Duke Mathematical Journal \textbf{55} (1987), no.~4, 919--941.

\bibitem{Zelditch19921}
\bysame, \emph{{On a ``quantum chaos'' theorem of R. Schrader and M. Taylor}},
  Journal of Functional Analysis \textbf{109} (1992), no.~1, 1--21.

\bibitem{zworski}
M.~Zworski, \emph{Semiclassical analysis}, Graduate Studies in Mathematics,
  vol. 138, American Mathematical Society, Providence, 2012.

\end{thebibliography}

\end{document}